\documentclass[reqno]{amsart} 
\usepackage{enumerate,amsmath,amssymb} 
\usepackage{pstricks,pst-node,pst-coil,pst-plot} 
\usepackage{hyperref}

\newtheorem{theorem}{Theorem}[section]
\newtheorem{proposition}[theorem]{Proposition}

\newtheorem*{theorem*}{Claim}
\newtheorem{lemma}[theorem]{Lemma}

\theoremstyle{definition}
\newtheorem{definition}   [theorem]  {Definition}

\theoremstyle{definition}
\newtheorem{remark}   [theorem]  {Remark}

\theoremstyle{definition}
\newtheorem{example}   [theorem]  {Example}

\newcommand{\definedas}{\mathrel{\raise.095ex\hbox{\rm :}\mkern-5.2mu=}}

\setlength{\marginparwidth}{0.6in}
\let\oldmarginpar\marginpar
\renewcommand\marginpar[1]{\-\oldmarginpar[\raggedleft\tiny #1]%
{\raggedright\tiny #1}}

\newcounter{mnotecount}[section]

\DeclareMathOperator{\tr}{tr}

\DeclareMathOperator{\arcsinh}{arcsinh}

\newcommand{\bR}{\mathbb{R}}

\newcommand{\ftil}{\widetilde{f}}

\newcommand{\bU}{\mathbb{U}}

\newcommand{\bQ}{\mathbb{Q}}

\newcommand{\bH}{\mathbb{H}}

\newcommand{\cL}{\mathcal{L}}

\newcommand{\cQ}{\mathcal{Q}}

\newcommand{\cR}{\mathcal{R}}

\newcommand{\gbar}{\bar{g}}

\renewcommand{\hbar}{\overline{h}}

\newcommand{\gtil}{\tilde{g}}
\newcommand{\Jtil}{\widetilde{J}}
\newcommand{\Jbar}{\bar{J}}

\newcommand{\pitil}{\widetilde{\pi}}
\newcommand{\pibar}{\bar{\pi}}
\newcommand{\mutil}{\widetilde{\mu}}
\newcommand{\mubar}{\bar{\mu}}

\newcommand{\Ebrev}{\breve{E}}
\newcommand{\Pbrev}{\breve{P}}



\DeclareMathOperator{\divg}{div}

\newcommand{\rlie}{\mathring{\mathcal{L}}}














\newcommand{\ric}{\mathrm{Ric}}




\newcommand{\scal}{\mathrm{Scal}}



\newcommand{\hess}{\mathrm{Hess}}







\begin{document} 

\author{Mattias Dahl}
\address{Institutionen f\"or Matematik \\
Kungliga Tekniska H\"ogskolan \\
100 44 Stockholm \\
Sweden} 
\email{dahl@kth.se}

\author{Anna Sakovich}
\address{Max-Planck-Institut f\"ur Gravitationsphysik 
(Albert-Einstein-Institut) \\
Am M\"uhlenberg 1, 14476 Potsdam, Germany}
\address{Matematiska Institutionen, Uppsala University, Box 480, 751 06 Uppsala, Sweden}
\email{anna.sakovich@math.uu.se}

\title[A density theorem for asymptotically hyperbolic initial data]
{A density theorem for\\ 
asymptotically hyperbolic initial data\\ 
satisfying the dominant energy condition}

\begin{abstract}
When working with asymptotically hyperbolic initial data sets for general relativity it is convenient to assume certain simplifying properties. We prove that the subset of initial data sets with such properties is dense in the set of physically reasonable asymptotically hyperbolic initial data sets. More specifically, we show that an asymptotically hyperbolic initial data set with non-negative local energy density can be approximated by an initial data set with strictly positive local energy density and a simple structure at infinity, while changing the mass arbitrarily little. This is achieved by suitably modifying the argument used by Eichmair, Huang, Lee and Schoen in the asymptotically Euclidean case.
\end{abstract}

\date{\today}

\maketitle

\section{Introduction}

In general relativity Einstein's equations read 
\begin{equation} \label{eqEinstein}
\ric^{\gamma} - \tfrac{1}{2}\scal^{\gamma} \gamma = \mathrm{T}.
\end{equation}
Here $\ric^{\gamma}$ and $\scal^{\gamma}$ denote respectively the Ricci tensor 
and the scalar curvature of a spacetime $(\mathcal{M},\gamma)$, and the 
symmetric divergence-free 2-tensor $\mathrm{T}$ is the stress-energy 
tensor of the spacetime. A spacetime $(\mathcal{M},\gamma)$ satisfying 
\eqref{eqEinstein} is said to obey the dominant energy condition if for 
any future directed timelike vector $\nu$ the vector 
$-\mathrm{T}(\nu,\cdot)^\sharp$ is either future directed timelike or null. 
This condition means that the energy density of $(\mathcal{M},\gamma)$ is 
non-negative and that the energy cannot travel faster than the speed of 
light. 

Let $(M,g)$ be a Riemannian submanifold of the spacetime 
$(\mathcal{M},\gamma)$ satisfying the Einstein equations with unit normal 
denoted by $\eta$ and second fundamental form denoted by $K$. In this case 
$(M,g)$ can be viewed as a ``constant time slice'' of $(\mathcal{M},\gamma)$.
The dominant energy condition for $(\mathcal{M},\gamma)$ at points of $M$
is equivalent to the inequality $\mu \geq |J|_g$ everywhere on $M$.
Here the energy density $\mu \definedas \mathrm{T}(\eta, \eta)$ and the 
momentum density $J \definedas \mathrm{T}(\eta, \cdot)$ can be computed 
from $g$ and $K$ using the {\em constraint equations} 
\begin{align}
-2 \mu &= -\scal^g-(\tr^g K)^2+|K|_g^2, \label{eqConstraints1} \\
J &= \divg^g K-d (\tr^g K). \label{eqConstraints2}
\end{align}

By an {\em initial data set} for the Einstein equations we will mean a 
triple $(M,g,K)$ consisting of a Riemannian $n$-manifold $(M,g)$ and a 
symmetric 2-tensor field $K$ defined on $M$. We will say that $(M,g,K)$ satisfies 
the {\em dominant energy condition} if $\mu\geq |J|_g$ holds everywhere on 
$M$, where $\mu$ and $J$ are defined through 
\eqref{eqConstraints1}--\eqref{eqConstraints2}. By the above discussion, 
this means that $(M,g,K)$ arises as a constant time slice of a spacetime 
satisfying the dominant energy condition.

An initial data set $(M,g,K)$ is said to be {\em asymptotically Euclidean} 
if outside some compact set $M$ is diffeomorphic to the complement of a 
ball in Euclidean space $\bR^n$, and if under this diffeomorphism $g$ approaches 
the Euclidean metric $\delta$ and $K$ approaches zero sufficiently fast at infinity. 
For asymptotically Euclidean initial data sets {\em asymptotic charge integrals} at
infinity can be defined. They are integrals which arise as boundary terms
when integrating the constraint operator
\[
\Phi: (g,K) \mapsto 
\left( -\scal^g - (\tr^g K)^2 + |K|_g^2, \; \divg^g K-d (\tr^g K) \right)
\]
against elements in the kernel of $D\Phi^*_{(\delta,0)}$, which correspond 
to Killing vectors of the Minkowski spacetime. In particular, this gives 
rise to the Arnowitt-Deser-Misner energy $E$ and linear momentum $P$. The
{\em positive mass conjecture} asserts that $E \geq |P|_g$ provided that
the dominant energy condition holds. In particular, $E\geq 0$ is expected
to hold under the same assumption, which is the statement of the
{\em positive energy conjecture}. An excellent overview of both conjectures can be found in the recent book by Lee \cite{LeeGR}.

For many applications in mathematical general relativity it is an advantage to work with initial data sets which have simple asymptotics at infinity. For example, an asymptotically Euclidean initial data set $(M,g,K)$ is said to have {\em harmonic asymptotics} if in asymptotically Euclidean coordinates at infinity we have
\[
g=u^{\tfrac{4}{n-2}} \delta, \qquad
\pi \definedas K-(\tr^g K) g 
= u^{\tfrac{2}{n-2}} (\cL_Y \delta - \divg^\delta Y),
\]
where $\cL$ denotes the Lie derivative, and the positive function $u$ and 
the vector field $Y$ are such that 
\[
u(x) = 1 + A |x|^{2-n} + O(|x|^{1-n}) , 
\qquad Y_j(x) = B_j |x|^{2-n} + O(|x|^{1-n}),
\]
for $A \in \bR$ and $B \in \bR^n$. In this case the Arnowitt-Deser-Misner 
energy and linear momentum can be easily read off from the asymptotic 
expansions of $u$ and $Y$ at infinity, namely,
\[
E= 2A, \qquad P_{j} = - \tfrac{n-2}{n-1} B_{j}.
\]

Further, many arguments can be simplified by working with initial data sets with strictly positive local energy density, that is such that the {\em strict dominant energy condition} $\mu > |J|_g$ is satisfied. This condition is preserved under ``small'' perturbations of the initial data set, whereas the standard dominant energy condition $\mu \geq |J|_g$ might get violated by a perturbation. In \cite[Theorem~18]{SpacetimePMT}, Eichmair, Huang, Lee, and Schoen prove that an asymptotically Euclidean initial data set satisfying dominant energy condition can be slightly perturbed to an initial data set with harmonic asymptotics which obeys strict dominant energy condition while changing the energy $E$ and the linear momentum $P$ arbitrarily little. That is, the set of asymptotically Euclidean initial data sets with these preferred properties is dense in the set of asymptotically Euclidean initial data sets satisfying the dominant energy condition. This result is used in the proof of the positive mass theorem by the above authors \cite[Theorem~1]{SpacetimePMT}, and it is also required for the proof of the positive energy conjecture in dimension $n=3$ by Schoen and Yau \cite{PMT2}, and for its extension to dimensions $3\leq n \leq 7$ by Eichmair \cite{EichmairReduction}. Another application is the analysis of the geometry and topology of initial data sets with horizons, see \cite{TopCensorshipID}.

The goal of the current paper is to prove the analogue of this density 
result for asymptotically hyperbolic initial data sets. Roughly 
speaking, an initial data set $(M,g,K)$ is asymptotically hyperbolic if the Riemannian metric $g$ approaches the hyperbolic metric $b$ on hyperbolic space $\bH^n$ in a chart covering everything outside a compact set. 
For $K$, there are two natural choices: either $K \to 0$ at infinity 
(as for spacelike totally geodesic hypersurfaces in asymptotically 
anti-de~Sitter spacetimes) or $K \to g$ at infinity 
(as for ``hyperboloidal'' hypersurfaces in asymptotically Minkowski spacetimes)
\footnote{We note that asymptotically hyperbolic initial data sets can also be modeled on totally umbilic hyperbolic slices of de~Sitter spacetime, for example \cite{LiangZhang}.}.
In this paper we adopt the second approach and consider ``hyperboloidal'' 
initial data, see Definition~\ref{defAHdata}. Then similar considerations 
as in the asymptotically Euclidean case give rise to the notion of 
{\em mass} for asymptotically hyperbolic initial data, which is a linear 
functional on a certain finite dimensional vector space. 

The main result of this paper is that a given asymptotically hyperbolic initial data set satisfying the dominant energy condition can be approximated by an initial data set with {\em conformally hyperbolic asymptotics} in the sense of Definition~\ref{defConfHypID} which obeys the strict dominant energy condition, while changing the value of the mass functional by an arbitrarily small amount. In fact, assuming sufficient regularity, one can construct coordinates at infinity in which the approximating initial data set has {\em Wang's asymptotics} in the sense of Definition~\ref{defWangData}. In particular, we prove the following

\begin{theorem}
Let $(M,g,K)$ be an asymptotically hyperbolic initial data set of type 
$(k+1, \alpha, \tau, \tau_0)$ for $0<\alpha<1$, $\tfrac{n}{2} < \tau < n$ and
$\tau_0>0$ satisfying the dominant energy condition $\mu \geq |J|_g$. Then for every $\varepsilon>0$ there exists
an asymptotically hyperbolic initial data set $(M,\bar{g},\bar{K})$ of type
$(k, \alpha, n , \tau_0')$ for some $\tau'_0>0$ with Wang's asymptotics satisfying 
the strict dominant energy condition 
\[
\bar{\mu} > |\bar{J}|_{\bar{g}}
\]
and such that the mass functionals $\mathcal{M}$ of $(M,g,K)$ and  $\overline{\mathcal{M}}$ of $(M,\bar{g},\bar{K})$ satisfy
\[
|\mathcal{M} (V) - \overline{\mathcal{M}} (V)|
< \varepsilon
\]
for any $V \in \mathcal{N}$, where $\mathcal{N}$ is the linear space spanned by restrictions of coordinate functions of Minkowski spacetime to the upper unit hyperboloid.
\end{theorem}

The applications of our results are similar to those of \cite[Theorem~18]{SpacetimePMT}. In particular, the results are used in the proofs of the positive mass theorem for asymptotically hyperbolic manifolds by Chru\'sciel and Delay \cite{ChruscielDelayPMT} and for asymptotically hyperbolic initial data sets by the second author \cite{AHJang}. They can also prove useful for establishing  other geometric inequalities for asymptotically hyperbolic initial data, such as those discussed in \cite{ChaKhuriSakovich}.

The paper is organized as follows. The definition of mass and its continuity with respect to the initial data set is discussed in Section~\ref{secPreliminaries}. In Section~\ref{secStrictDEC} we show that a given asymptotically hyperbolic initial data set satisfying the dominant energy condition can be perturbed slightly to satisfy the strict dominant energy condition, while changing the mass arbitrarily little. Then in Section~\ref{secConfHyp} we make a further perturbation to conformally hyperbolic asymptotics, while preserving the strict dominant energy condition.
In Section~\ref{secWang} we prove a density result concerning asymptotically hyperbolic initial data sets that have Wang's asymptotics. We also discuss how one can switch to Wang's asymptotics given the approximating initial data set constructed in Section~\ref{secConfHyp}.
Finally, in Section~\ref{secRemarks} we give comments on possible extensions
of the results of this paper. Some supplementary results concerning
differential operators on asymptotically hyperbolic manifolds are
contained in Appendices \ref{appFredholm}, \ref{secAbove}, and \ref{secUC}. 

\subsection*{Acknowledgements}

We want to thank Romain Gicquaud and Lan-Hsuan Huang for helpful discussions and comments related to the work in this paper. Further, we want to thank two anonymous referees for their careful reading and many insightful remarks helping to improve the paper.

\section{Preliminaries}
\label{secPreliminaries}

\subsection{Asymptotically hyperbolic initial data}
\label{secDefAH}

We denote the hyperbolic space of dimension $n$ by $\bH^n$ and the hyperbolic 
metric by $b$. We choose a point in $\bH^n$ as the origin. In polar 
coordinates around this point we have $b = dr^2 + \sinh^2r \, \sigma$ on
$(0,\infty) \times S^{n-1}$, where $\sigma$ is the round metric on the unit
sphere $S^{n-1}$ and $r$ is the distance to the origin. The open ball of
radius $R$ centered at the origin is denoted by $B_R$, and its closure is
denoted by $\overline{B}_R$.

We first give the definition of an asymptotically hyperbolic Riemannian
manifold.

\begin{definition} \label{defAHmanifold}
We say that $(M, g)$ is a {\em $C^{l, \beta}_\tau$-asymptotically hyperbolic 
manifold} for a non-negative integer $l$, $0 \leq \beta <1$ and $\tau> 0$, 
if there exists a compact set $K_0 \subset M$, $R_0>0$ and a diffeomorphism
\[
\Psi: M \setminus K_0 \to \bH^n \setminus \overline{B}_{R_0}, 
\]
such that $\Psi_* g - b \in C^{l, \beta}_{loc}(\bH^n; S^2\bH^n)$ and
\[
\left\|\Psi_* g - b \right\|_{C^{l, \beta}_\tau(\bH^n\setminus B_{R_0}; S^2 \bH^n)} 
\definedas 
\sup_{x \in \bH^n\setminus B_{R_0+1}} e^{\tau r(x)} 
\left\|\Psi_* g - b\right\|_{C^{l, \beta}(B_1(x); S^2\bH^n)}
< \infty.
\]
\end{definition}

The diffeomorphism $\Psi$ introduced in this definition is called a 
{\em chart at infinity} for the asymptotically hyperbolic manifold. 

Let $(M,g)$ be a $C^{l, \beta}_\tau$-asymptotically hyperbolic manifold for 
$l$, $\beta$, $\tau$ as in Definition~\ref{defAHmanifold}. Suppose that
$u$ is a locally integrable section of a geometric tensor bundle $E$
(see \cite[Chapter~3]{LeeFredholm} for the definition of geometric tensor
bundles) over $M\setminus K_0$. In this case we say that
$u \in W_\delta^{k,p}(M \setminus K_0)$ for $0\leq k \leq l$, and
$1<p<\infty$ if 
\[
\left\|u \right\|_{W^{k, p}_\delta(M \setminus K_0)} \definedas 
\left\| e^{\delta r} \Psi_* u \right\|_{W^{k, p}(\bH^n \setminus B_{R_0})} < \infty.
\]
Note also the following equivalent definition of the 
$W^{k, p}_\delta(M \setminus K_0)$ norm,
\[
\left\|u \right\|_{W^{k, p}_\delta(M \setminus K_0)} 
=\sum_{0\leq j\leq k} \|e^{\delta r} \nabla^j (\Psi_* u)\|_{L^p(\bH^n \setminus B_{R_0})}.
\]
Building on these definitions, it is straightforward to define
{\em weighted Sobolev spaces} $W^{k,p}_\delta (M)$ for any $\delta$, 
$0\leq k \leq l$, and $1<p<\infty$. 
The {\em weighted H\"older spaces} $C^{k, \alpha}_\delta(M)$ are 
defined in a similar fashion. We say that 
$u \in C_\delta^{k,\alpha}(M \setminus K_0)$ for 
$0\leq k+\alpha \leq l + \beta$, and $0\leq \alpha <1$ if
\[
\left\| u \right\|_{C^{k, \alpha}_\delta(M \setminus K_0)} 
\definedas \sup_{x \in \bH^n\setminus B_{R_0+1}} e^{\tau r(x)} 
\left\| \Psi_* u \right\|_{C^{k, \alpha}(B_1(x))} < \infty.
\]
The following equivalent definition of the
$C^{k, \alpha}_\delta(M \setminus K_0)$ norm is often useful,
\[
\left\|u \right\|_{C^{k, \alpha}_\delta(M \setminus K_0)} 
= \sum_{0\leq j \leq k} \sup_{\bH^n \setminus B_{R_0}} |e^{\delta r} \nabla^j (\Psi_* u)| 
+ \|e^{\delta r} \nabla^k(\Psi_* u)\|_{C^{0,\alpha}(\bH^n \setminus B_{R_0})}.
\]
Again, these definitions can be extended to define weighted H\"older 
spaces $C^{k,\alpha}_\delta (M)$.

The weighted Sobolev and H\"older spaces that we have just defined are 
analogues of the respective spaces defined by J. Lee in \cite{LeeFredholm} 
on conformally compact manifolds. It is easy to check that standard facts 
such as embedding theorems, the Rellich lemma, and density theorems
hold for these spaces and that the statements of these results repeat 
verbatim the respective statements in \cite{LeeFredholm}. In particular 
Lemma~3.6 and Lemma~3.9 of \cite{LeeFredholm} hold for $W^{k,p}_\delta (M)$ 
and $C^{k,\alpha}_\delta (M)$ as defined above and we will refer to 
\cite{LeeFredholm} for these results throughout the text.

It is straightforward to check that classical interior elliptic regularity 
as formulated in \cite[Lemma~4.8]{LeeFredholm} holds for asymptotically
hyperbolic manifolds and weighted function spaces as defined in Section
\ref{secDefAH}. In Appendix \ref{appFredholm} we show that improved elliptic
regularity \cite[Proposition~6.5]{LeeFredholm} holds in the current setting. 
As a consequence, Fredholm theory for geometric elliptic operators on
asymptotically hyperbolic manifolds in the sense of 
Definition~\ref{defAHmanifold} can be established, since the proof of 
\cite[Theorem~C]{LeeFredholm} can be adapted. The reader is referred to 
Appendix \ref{appFredholm} for details.

We can now give the definition of an asymptotically hyperbolic initial
data set. Recall that the energy density $\mu$ and the momentum density
$J$ are defined via the constraint equations 
\eqref{eqConstraints1}--\eqref{eqConstraints2}. 

\begin{definition} \label{defAHdata}
A triple $(M, g, K)$ is an {\em asymptotically hyperbolic initial data set
of class $(k, \alpha, \tau)$} for $k\geq 2$, $0 \leq \alpha <1$ and $\tau > 0$ if 
\begin{itemize}
\item
$(M,g)$ is a $C^{k,\alpha}_\tau$-asymptotically hyperbolic manifold in the
sense of Definition~\ref{defAHmanifold},
\item
a symmetric 2-tensor $K$ is such that $K-g \in C^{k-1,\alpha}_{\tau}(M;S^2 M)$.
\end{itemize}
If, in addition, $(\mu,J)\in C^{k-2,\alpha}_{n+\tau_0}$ for some $\tau_0>0$
then $(M, g, K)$ is an {\em asymptotically hyperbolic initial data set
of class $(k,\alpha,\tau,\tau_0)$}. 
\end{definition}

Abusing notation slightly we may summarize the content of this definition
as follows: $(M,g,K)$ is an asymptotically hyperbolic initial data set of
class $(k,\alpha, \tau)$ for $0 \leq \alpha <1$ and $\tau> 0$ if 
$(g-b,K-g) \in C^{k,\alpha}_\tau \times C^{k-1,\alpha}_\tau$. In this case
$(\mu,J)\in C^{k-2,\alpha}_\tau$. The necessity for the faster decay
$(\mu,J)\in C^{k-2,\alpha}_{n+\tau_0}$ will become clear in Section
\ref{secCharges}.

Given an asymptotically hyperbolic initial data set $(M,g,K)$ of class
$(\alpha,\tau)$ it is convenient to set
\[
\pi \definedas (K - g) - \tr^g (K - g) g.
\]
Note that $\pi \in C^{k-1,\alpha}_\tau$ and that $\pi$ contains the same
information as $K$, since $K = \pi + g - \tfrac{1}{n-1}(\tr ^g \pi)g$.
It is therefore equivalent to work with $(M,g,\pi)$ as an initial data
set, and in this paper we will only work with initial data sets given in this
form.

In terms of $(g,\pi)$ the constraint equations
\eqref{eqConstraints1}--\eqref{eqConstraints2} are written as
\begin{align*}
- 2 \mu &=
- (\scal^g + n(n-1)) + 2 \tr^g \pi - \frac{(\tr^g \pi)^2}{n-1} + |\pi|^2_g, \\
J &= \divg^g \pi. 
\end{align*}
By the {\em constraint map} we mean the map 
\begin{equation} \label{eqConstraintMap2}
\Phi:(g,\pi) \mapsto
\left( - (\scal^g + n(n-1)) + 2 \tr^g \pi - \frac{(\tr^g \pi)^2}{n-1}
+ |\pi|^2_g, \divg^g \pi\right). 
\end{equation}

Finally, we define initial data sets with conformally hyperbolic
asymptotics. Recall that the {\em conformal Killing operator} $\rlie$
is defined by
\[
(\rlie_Y g)_{ij}
= \nabla_i Y_j + \nabla_j Y_i - \tfrac{2}{n}(\divg^g Y)g_{ij},
\]
that is, $(\rlie_Y g)_{ij}$ is the trace-free part of the Lie derivative
$(\cL_Y g)_{ij} = \nabla_i Y_j + \nabla_j Y_i$.

\begin{definition} \label{defConfHypID}
We say that an initial data set $(M, g, \pi)$ has {\em conformally
hyperbolic asymptotics} if there exists a compact set $K_0$, a radius
$R_0>0$, and a diffeomorphism
\[
\Psi: M \setminus K_0 \to \bH^n \setminus \overline{B}_{R_0}, 
\]
such that
\[
\Psi_* g
= (1+v)^{\tfrac{4}{n-2}} b, \quad \Psi_* \pi
= (1+v)^{\tfrac{2}{n-2}} \rlie_Y b, 
\]
where the function $v$ and the components of the 1-form $Y$ can be written in the form 
\begin{equation} \label{eqConfHyp}
\begin{split}
v &= v_0 e^{-nr} +v_1, \\
Y_r &= (Y_0)_r e^{-n r} + (Y_1)_r,  \\
Y_\varphi &= (Y_0)_\varphi e^{- (n-1) r} + (Y_1)_\varphi,
\end{split}
\end{equation}
where $\varphi$ refers to a coordinate on $S^{n-1}$, $(v_0,Y_0) \in C^{k,\alpha}_{loc}$ is independent of $r$ and 
$(v_1,Y_1) \in C^{k,\alpha}_{n+1}$ for $k\geq 2$ and $0 \leq \alpha <1$.
\end{definition}

\subsection{The mass functional for asymptotically hyperbolic initial data}
\label{secCharges}

In this section we review the concept of mass in the asymptotically 
hyperbolic setting and discuss the continuity of mass with respect to the 
initial data. We first recall how the asymptotic charge integrals are 
defined, following Michel \cite{MichelMass}.

Let $(M,g,\pi)$ be an asymptotically hyperbolic initial data set of type 
$(k,\alpha, \tau)$ for $k \geq 2$, $0\leq \alpha<1$ and $\tau>0$, and let $\Psi$ be the 
chart at infinity as in Definition~\ref{defAHmanifold}. Clearly, in this 
case we have $e \definedas \Psi_* g-b \to 0$ and 
$\eta \definedas \Psi_*\pi \to 0$ at infinity. Let the constraint map 
$\Phi$ be defined by \eqref{eqConstraintMap2}. Since $\Phi(b,0)=0$, 
linearization gives us 
\begin{equation} \label{eqLinear0}
\Phi(\Psi_*(g,\pi)) = D\Phi|_{(b,0)} (e,\eta) + \mathcal{Q}(e,\eta),
\end{equation}
where $\mathcal{Q}(e,\eta)$ is a remainder term of second order. 
For any function $V$ and 1-form $\varpi$ there is a 1-form 
$\mathbb{U}_{(V,\varpi)} (e,\eta)$ such that
\[
\langle D\Phi|_{(b,0)} (e,\eta), (V,\varpi) \rangle 
= \divg^b \mathbb{U}_{(V,\varpi)} (e,\eta) 
+ \langle (e,\eta), D\Phi^*_{(b,0)}(V,\varpi) \rangle,
\]
where $D\Phi^*_{(b,0)}$ is the formal adjoint of $D\Phi|_{(b,0)}$. Here 
$\langle \cdot,\cdot \rangle$ denotes the inner product induced by $b$ on 
geometric tensor bundles over $\bH^n$. Contracting \eqref{eqLinear0} with 
$(V,\varpi) \in \ker D\Phi^*_{(b,0)}$ we obtain 
\begin{equation} \label{eqLinearContracted}
\langle \Phi(\Psi_*(g,\pi)), (V,\varpi) \rangle 
= \divg^b \mathbb{U}_{(V,\varpi)} (e,\eta) 
+ \langle \mathcal{Q}(e,\eta), (V,\varpi) \rangle.
\end{equation}
In this way we assign to every $(V,\varpi) \in \ker D\Phi^*_{(b,0)}$ 
the {\em charge integral}
\[ \begin{split}
\mathbb{Q}_{(V,\varpi)} (g,\pi) 
&\definedas 
\lim_{R\to \infty} \int_{S_R} \mathbb{U}_{(V,\varpi)} (e,\eta) (\nu) \, d\mu^b,
\end{split} \]
where $\nu$ is the outer unit normal of the $(n-1)$-dimensional sphere
$S_R$ in $\bH^n$. 
 
The structure of the kernel of $D\Phi^*_{(b,0)}$ is well understood, see
Moncrief \cite{Moncrief}. Namely, $(V,\varpi^\sharp)$ corresponds to the
 normal-tangential (or lapse-shift) decomposition of the restriction along 
the unit hyperboloid of a Killing vector field of Minkowski spacetime. 
In other words, $(V,\varpi^\sharp)$ is a {\em Killing initial data} 
(or {\em KID}) for Minkowski spacetime given on the unit hyperboloid. 

In particular, we have $(V,-dV) \in \ker D\Phi^*_{(b,0)}$ for
$V \in \mathcal{N}$, where the vector space $\mathcal{N}$ is spanned by 
the functions
\[
V_{(0)} = \cosh r, \quad
V_{(1)} = x^1 \sinh r, \quad \dots, \quad
V_{(n)} = x^n \sinh r
\]
expressed in polar coordinates on $\bH^n = (0,\infty) \times S^{n-1}$.
Here $x^1,\dots, x^n$ are the coordinate functions on $\bR^n$ restricted
to $S^{n-1}$.

For these KIDs we have the following result.

\begin{proposition} \label{propWellDefMass}
Let $(M,g,\pi)$ be an asymptotically hyperbolic initial data set of type
$(k,\alpha, \tau,\tau_0)$ for $k\geq 2$, $0\leq \alpha <1$, $\tau > \tfrac{n}{2}$, and
$\tau_0>0$. Then for every $V \in \mathcal{N}$ the charge integral
$\bQ_{(V,-dV)}(g,\pi)$ is well-defined and can be computed by the formula
\begin{equation} \label{eqMassQ}
\begin{split}
&\mathbb{Q}_{(V,-dV)}(g,\pi)\\ 
&\quad=
\lim_{R \to \infty} \int_{S_R}
\left(V (\operatorname{div}^b e- d \tr^b e)
+ (\tr^b e) dV - (e+2\eta) (\nabla^b V, \cdot)
\right)(\nu) \, d\mu^b.
\end{split}
\end{equation}
\end{proposition}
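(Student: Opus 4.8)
The plan is to establish two things: first, that the limit defining $\bQ_{(V,-dV)}(g,\pi)$ exists; second, that the one-form $\bU_{(V,-dV)}(e,\eta)$ appearing in the flux integral agrees, up to a term whose integral over $S_R$ vanishes in the limit, with the explicit one-form $V(\divg^b e - d\tr^b e) + (\tr^b e)\, dV - (e+2\eta)(\nabla^b V,\cdot)$. The starting point is the divergence identity \eqref{eqLinearContracted}: since $(V,-dV)\in\ker D\Phi^*_{(b,0)}$, integrating over the annular region $B_{R_2}\setminus B_{R_1}$ and applying the divergence theorem gives
\[
\int_{B_{R_2}\setminus B_{R_1}} \langle \Phi(\Psi_*(g,\pi)),(V,-dV)\rangle \, d\mu^b
= \Big(\int_{S_{R_2}} - \int_{S_{R_1}}\Big) \bU_{(V,-dV)}(e,\eta)(\nu)\, d\mu^b
+ \int_{B_{R_2}\setminus B_{R_1}} \langle \cQ(e,\eta),(V,-dV)\rangle\, d\mu^b.
\]
Existence of the limit will follow from a Cauchy criterion once I show the two volume integrals on the right converge absolutely over $\bH^n\setminus B_{R_0}$. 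For the first, $\Phi(\Psi_*(g,\pi)) = 2(\mu,J)$ (up to the obvious sign and factor), which lies in $C^{0,\alpha}_{n+\tau_0}$ by hypothesis; since $|V|$ and $|dV|_b$ grow like $e^{r}$ and $d\mu^b$ has volume growth $e^{(n-1)r}\,dr$, the integrand is $O(e^{-\tau_0 r})e^{(n-1)r}\cdot\! e^{-(n-1)r}\,dr = O(e^{-\tau_0 r})\,dr$, hence integrable. For the quadratic remainder, $\cQ(e,\eta)$ is schematically a sum of terms of the form (coefficient)$\cdot\, \partial\#\cdot\partial\#$ and (coefficient)$\cdot\,\#\cdot\#$ with $\#\in\{e,\eta\}$, so $|\cQ(e,\eta)|_b = O(e^{-2\tau r})$; pairing with $(V,-dV)$ (growth $e^r$) and integrating against $d\mu^b$ gives an integrand $O(e^{(2-2\tau+(n-1)-(n-1))r})\,dr = O(e^{(2-2\tau)r})\,dr$, which is integrable precisely when $\tau>1$, and in particular when $\tau>\tfrac{n}{2}$ (as $n\ge 3$). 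This is where the hypothesis $\tau>\tfrac n2$ enters.

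The second and more computational step is to produce the explicit formula \eqref{eqMassQ}. Here I would compute $D\Phi|_{(b,0)}(e,\eta)$ directly. The linearization of the scalar-curvature part is the classical formula $D\scal_b(e) = -\Delta^b(\tr^b e) + \divg^b\divg^b e - \langle e,\ric^b\rangle$, and on $\bH^n$ one has $\ric^b = -(n-1)b$, so the zeroth-order piece of the first constraint contributes cleanly; the linearization of $\divg^g\pi$ at $\pi=0$ is simply $\divg^b\eta$. Pairing $\langle D\Phi|_{(b,0)}(e,\eta),(V,-dV)\rangle$ and integrating by parts to peel off a total $b$-divergence, the boundary one-form that emerges is, up to lower-order terms,
\[
\bU_{(V,-dV)}(e,\eta) = V(\divg^b e - d\tr^b e) + (\tr^b e)\, dV - e(\nabla^b V,\cdot) - 2\eta(\nabla^b V,\cdot) + (\text{terms that are } O(e^{-2\tau r})\text{ or exact}),
\]
the $2\eta(\nabla^b V,\cdot)$ arising from integrating $\langle\divg^b\eta,-dV\rangle$ by parts once more. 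I would then check that the discarded terms — which are either genuinely quadratic in $(e,\eta)$ or are total divergences on $S_R$ — contribute nothing to $\lim_{R\to\infty}\int_{S_R}(\cdot)(\nu)\,d\mu^b$, again using the $O(e^{-\tau r})$ decay of $(e,\eta)$ against the $e^r$ growth of $V$ and the bounded area element $d\mu^{S_R}$ normalized appropriately (or rather, noting $|\nabla^b V|_b\sim e^r$ and $\mathrm{Area}(S_R)\sim e^{(n-1)r}$, so a term decaying faster than $e^{-nr}$ drops out — and $2\tau > n$ is exactly the needed margin, the same $\tau>\tfrac n2$ again).

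The main obstacle I anticipate is the bookkeeping in the integration-by-parts step: one must track which first-order terms in $e$ and $\eta$ survive on the sphere and organize them into the symmetric-looking combination in \eqref{eqMassQ}, rather than an equivalent-but-messier expression. A clean way to handle this is to use Michel's general identity for $\bU_{(V,\varpi)}$ and then specialize to $\varpi = -dV$ on the Einstein background $\bH^n$, exploiting $\ric^b = -(n-1)b$ and $\nabla^b dV = V b$ for $V\in\cN$ (the latter is the defining "static potential" equation and collapses several Hessian terms). I expect no genuine analytic difficulty beyond the decay counting already sketched; the content is really the algebraic identification of the flux form, together with the observation that $\tau>\tfrac n2$ simultaneously guarantees (i) absolute convergence of the $\cQ$-volume integral and (ii) vanishing of the quadratic boundary corrections, which is exactly why that threshold appears in the hypothesis.
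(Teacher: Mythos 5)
Your approach is the same as the paper's: integrate the identity \eqref{eqLinearContracted} over an exterior region, apply the divergence theorem, and reduce well-definedness to absolute convergence of the two volume integrals; for the explicit formula \eqref{eqMassQ} the paper simply cites Michel \cite[Section~IV.2.B]{MichelMass}, whereas you sketch the derivation (correctly invoking $\hess^b V = Vb$ for $V\in\mathcal{N}$), which is fine.

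There is, however, a concrete arithmetic slip in the one step where the analysis has real content. For the quadratic remainder you have $|\mathcal{Q}(e,\eta)|_b = O(e^{-2\tau r})$, the pairing with $(V,-dV)$ contributes a factor $e^{r}$, and the volume element contributes $e^{(n-1)r}\,dr$, so the radial integrand is $O(e^{(n-2\tau)r})\,dr$, not $O(e^{(2-2\tau)r})\,dr$. The convergence threshold is therefore precisely $\tau>\tfrac{n}{2}$, not $\tau>1$ as you assert. Your conclusion survives because $\tau>\tfrac{n}{2}$ is assumed, but your statement that the integral converges ``precisely when $\tau>1$'' is false and misrepresents the hypothesis as non-sharp for this step; note that you do get the correct threshold $2\tau>n$ for the analogous boundary estimate at the end, so the two counts in your write-up are inconsistent with each other. (Your intermediate expression for the first volume integral, with the factor $e^{(n-1)r}\cdot e^{-(n-1)r}$, is also garbled, though the stated bound $O(e^{-\tau_0 r})\,dr$ is the right one.)
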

\begin{proof}
Integrating \eqref{eqLinearContracted} over $\bH^n \setminus B_{R_0}$ and
using the divergence theorem we obtain
\[ \begin{split}
&\bQ_{(V,-dV)}(g,\pi) \\
&\quad = 
\int_{\bH^n \setminus B_{R_0}} 
\langle \Phi(\Psi_*(g,\pi)) - \mathcal{Q}(e,\eta), (V,-dV) \rangle \, d\mu^b 
+ \int_{S_{R_0}} \bU_{(V,-dV)}(e,\eta) (\nu)\, d\mu^b.
\end{split} \]
Estimating $\mathcal{Q}(e,\eta)$ as in \cite[Equation (12)]{MichelMass} 
and using our assumptions on the decay of the initial data, we see that 
the integral over $\bH^n \setminus B_{R_0}$ converges, 
hence $\bQ_{(V,-dV)}(g,\pi)$ is well-defined. 

We refer to \cite[Section~IV.2.B]{MichelMass} and references therein for the
derivation of the formula \eqref{eqMassQ}.
\end{proof}

\begin{definition}
Let $(M,g,\pi)$ be an asymptotically hyperbolic initial data set. Then
the {\em mass} of $(M,g,\pi)$ is the linear functional
$\mathcal{M}_{(g,\pi)}: \mathcal{N} \to \bR$ given by
\[
\mathcal{M}_{(g,\pi)}(V)
= \tfrac{1}{2(n-1)\omega_{n-1}} \mathbb{Q}_{(V,-dV)}(g,\pi),
\]
where $\omega_{n-1}$ denotes the volume of the unit sphere $(S^{n-1},\sigma)$.
\end{definition}

This is the same as the expression for the Bondi mass obtained by 
Chru\'sciel, Jesierski, and {\L}{\c{e}}ski in
\cite{ChruscielJezierskiLeski}, under asymptotic decay conditions that
however do not allow for gravitational radiation. See \cite{ChruscielNagy}, \cite{CH}, 
and \cite{MichelMass} for discussions on coordinate covariance.

As Proposition~\ref{propWellDefMass} shows, the mass functional is well
defined for asymptotically hyperbolic initial data sets of type
$(k,\alpha,\tau,\tau_0)$ for $k\geq 2$, $0\leq \alpha < 1$, $\tau > \tfrac{n}{2}$,
and $\tau_0>0$. It is also straightforward to check that the mass 
functional is trivial for asymptotically hyperbolic initial data sets 
of type $(\alpha,\tau)$ with $\tau > n$. The following are two examples 
of the ``critical'' case $\tau=n$. 

\begin{example}
The Anti-de~Sitter Schwarzschild Riemannian metric is given by
\[
g_{\rm AdSS} =
\frac{d\rho^2}{1 + \rho^2 - \frac{2m}{\rho^{n-2}}} + \rho^2 \sigma
\]
on $[a, \infty) \times S^{n-1}$, where the inner radius $a$ depends on $m$,
see for example \cite[Appendix A]{DGSsmallmass}. It can be realized as
an \emph{umbilic} (that is, $g=K$) asymptotically hyperbolic initial data 
set for Schwarzschild spacetime, see Brendle and Wang \cite{BrendleWang}.
In this case 
\[
\mathcal{M}(V_{(0)}) = m, \qquad \text{and} \qquad
\mathcal{M}(V_{(i)}) = 0, 
\]
for $i=1, \dots, n$, where $m$ coincides with the mass parameter of the 
Schwarzschild metric.
\end{example}

\begin{example}
For initial data sets with conformally hyperbolic asymptotics as in
Definition~\ref{defConfHypID} it is not complicated to compute that 
\[
\mathcal{M} (V_{(0)})
=
\tfrac{2(n+1)}{(n-2)\omega_{n-1}} \int_{S^{n-1}} v_0 \,d\mu^\sigma
+ \tfrac{2(n+1)}{n \omega_{n-1}} \int_{S^{n-1}} (Y_0)_r \,d\mu^\sigma,
\]
and
\[
\mathcal{M} (V_{(i)})
=
\tfrac{2(n+1)}{(n-2)\omega_{n-1}} \int_{S^{n-1}} x^i v_0 \, d\mu^\sigma
+ \tfrac{2(n+1)}{n\omega_{n-1}} \int_{S^{n-1}} x^i (Y_0)_r \,d\mu^\sigma
\]
for $i=1, \dots, n$.
\end{example}

Concluding this section, we confirm that the mass is continuous as a
function of asymptotically hyperbolic initial data sets of type
$(k,\alpha, \tau, \tau_0)$, where $k\geq 2$, $0\leq \alpha< 1$, $\tau > \tfrac{n}{2}$,
and $\tau_0>0$. For simplicity, the charts at infinity are suppressed
in the statement of the result and in the proof.

\begin{proposition} \label{propContinuity}
Let $(g,\pi)$ and $(\bar{g}, \bar{\pi})$ be asymptotically hyperbolic 
initial data sets of type $(k,\alpha, \tau, \tau_0)$ for $k\geq 2$, $0\leq \alpha< 1$,
$\tau > \tfrac{n}{2}$, and $\tau_0>0$. Let $(\mu,J)$ and $(\bar{\mu},\bar{J})$ denote the respective energy and momentum densities
defined via the constraint equations \eqref{eqConstraints1}--\eqref{eqConstraints2}. 
 Given $\varepsilon>0$ there exists
$\delta>0$ depending only on $(g,\pi)$ and $\varepsilon$, such that if 
\begin{equation} \label{decayforcont1}
\| g-\bar{g} \|_ {C^{2}_\tau} \leq \delta , \quad
\|\pi-\bar{\pi} \|_ {C^{1}_\tau} \leq \delta ,
\end{equation}
and 
\begin{equation} \label{decayforcont3}
\|(\mu,J)-(\bar{\mu},\bar{J}) \|_ {C^{0}_{n+\tau_0}} \leq \delta, 
\end{equation}
then for any $V \in \{V_{(0)}, V_{(1)}, \ldots, V_{(n)}\}$ we have
\[
\left|
\mathcal{M}_{(g,\pi)}(V) - \mathcal{M}_{(\bar{g},\bar{\pi})}(V)
\right|
\leq \varepsilon.
\]
\end{proposition}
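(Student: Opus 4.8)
The plan is to work from the boundary-term formula \eqref{eqMassQ} together with the convergent bulk integral representation of the charge that was extracted in the proof of Proposition~\ref{propWellDefMass}. Concretely, for each $V \in \{V_{(0)},\dots,V_{(n)}\}$ we write
\[
\bQ_{(V,-dV)}(g,\pi)
= \int_{\bH^n \setminus B_{R_0}}
\langle \Phi(\Psi_*(g,\pi)) - \mathcal{Q}(e,\eta), (V,-dV) \rangle \, d\mu^b
+ \int_{S_{R_0}} \bU_{(V,-dV)}(e,\eta)(\nu)\, d\mu^b,
\]
and the analogous identity for $(\bar g,\bar\pi)$. The difference of the masses is then controlled by the difference of these two pieces, so it suffices to estimate (i) the boundary term over the fixed sphere $S_{R_0}$, and (ii) the bulk integral over $\bH^n\setminus B_{R_0}$.

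For the boundary term over $S_{R_0}$ this is immediate: $\bU_{(V,-dV)}(e,\eta)$ is a universal linear expression in $e=\Psi_*g-b$, $\eta=\Psi_*\pi$ and their first derivatives, with coefficients depending only on $b$ and $V$, all of which are bounded on the fixed compact sphere $S_{R_0}$. Hence the difference of the two boundary integrals is bounded by $C(R_0,V)\big(\|g-\bar g\|_{C^1(S_{R_0})} + \|\pi-\bar\pi\|_{C^1(S_{R_0})}\big)$, which is controlled by $\delta$ via \eqref{decayforcont1} (note $\|\cdot\|_{C^k}\le\|\cdot\|_{C^k_\tau}$ near $S_{R_0}$ since the weight $e^{\tau r}\ge 1$ there). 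For the bulk term, split the integrand into a linear-in-constraints part $\langle \Phi(\Psi_*(g,\pi)),(V,-dV)\rangle = \langle(\mu_{\mathrm{sign}}, J),(V,-dV)\rangle$ (up to the sign conventions in \eqref{eqConstraintMap2}) and the quadratic remainder $\langle \mathcal{Q}(e,\eta),(V,-dV)\rangle$. The first part: $|V| + |dV|_b = O(e^r)$ (since $V$ grows like $\cosh r$ or $\sinh r$ and $|\nabla^b V|_b = O(e^r)$ too), the volume form $d\mu^b = O(e^{(n-1)r})\,dr\,d\sigma$, and $(\mu,J)\in C^0_{n+\tau_0}$ means $|\mu|+|J|_b = O(e^{-(n+\tau_0)r})$; so the integrand is $O(e^{-\tau_0 r})$ and integrable, and the difference between the $(g,\pi)$ and $(\bar g,\bar\pi)$ versions is bounded by $C(V)\|(\mu,J)-(\bar\mu,\bar J)\|_{C^0_{n+\tau_0}} \le C\delta$ via \eqref{decayforcont3}.

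The main obstacle is the quadratic remainder term $\mathcal{Q}(e,\eta)$. Here the difference $\mathcal{Q}(e,\eta) - \mathcal{Q}(\bar e,\bar\eta)$ is not simply linear in $(e-\bar e,\eta-\bar\eta)$; being (schematically) quadratic, it has a cross-term structure
\[
\mathcal{Q}(e,\eta) - \mathcal{Q}(\bar e,\bar\eta)
\sim (e+\bar e)*(e-\bar e) + (\eta+\bar\eta)*(\eta-\bar\eta) + \nabla(\cdots),
\]
where $*$ denotes contractions with $b$, possibly with first derivatives and with coefficients depending smoothly on the metrics. The point is that one factor involves the \emph{fixed} data $(e,\eta)$ or $(\bar e,\bar\eta)$, which by hypothesis has the full decay $O(e^{-\tau r})$ with $\tau > \tfrac n2$, and is in particular bounded; the other factor is $O(\delta e^{-\tau r})$ by \eqref{decayforcont1}. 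Following the estimate of $\mathcal{Q}$ used in \cite[Equation~(12)]{MichelMass} and in the proof of Proposition~\ref{propWellDefMass}, the pointwise bound is then
\[
|\langle \mathcal{Q}(e,\eta) - \mathcal{Q}(\bar e,\bar\eta), (V,-dV)\rangle|
\le C\,\delta\, e^r \cdot e^{-2\tau r} = C\,\delta\, e^{-(2\tau-1)r},
\]
and since $2\tau - 1 > n - 1$ (because $\tau > \tfrac n2$), multiplying by $d\mu^b = O(e^{(n-1)r})$ gives an integrand that is $O(\delta e^{-(2\tau - n)r})$ with $2\tau - n > 0$, hence integrable with integral $\le C\delta$. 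One has to be slightly careful that $\delta$ should be taken small enough (say $\delta \le 1$) so that $(\bar g, \bar\pi)$ lies in a fixed neighborhood of $(g,\pi)$ and the smooth coefficient functions appearing in $D\Phi$ and $\mathcal Q$ are uniformly bounded; this is where the dependence of $\delta$ on $(g,\pi)$ (and not just on $\varepsilon$) enters. Combining the three estimates, choosing $\delta$ small enough that each contribution is at most $\varepsilon/3$, yields $|\mathcal{M}_{(g,\pi)}(V) - \mathcal{M}_{(\bar g,\bar\pi)}(V)| \le \varepsilon$ for each of the finitely many basis elements $V$, which is the claim.
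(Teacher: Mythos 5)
Your proposal is correct and follows essentially the same route as the paper: both decompose the mass difference into the bulk constraint term (controlled by \eqref{decayforcont3}), the bulk quadratic-remainder term (controlled by \eqref{decayforcont1} via the bilinear cross-term structure and the condition $2\tau>n$), and the fixed inner boundary term, each bounded by $C\delta$. Your explicit treatment of the cross-term estimate and of keeping $(\bar g,\bar\pi)$ in a fixed neighborhood of $(g,\pi)$ merely fills in details the paper leaves implicit.
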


\begin{proof}
Fix $R \geq R_0$. Arguing as in the proof of Proposition~\ref{propWellDefMass}
we find that
\[ \begin{split}
& 2(n-1)\omega_{n-1} 
\left( \mathcal{M}_{(g,\pi)}(V) - \mathcal{M}_{(\bar{g},\bar{\pi})}(V) \right)\\
&\qquad=
\int_{\bH^n \setminus B_{R}} 
\langle \Phi(g,\pi) -\Phi(\bar{g}, \bar{\pi}), (V,-dV) \rangle \, d\mu^b \\
&\qquad \qquad
-\int_{\bH^n \setminus B_{R}}
\langle \mathcal{Q}(e,\eta) -\mathcal{Q}(\bar{e},\bar{\eta}), (V,-dV) \rangle
\, d\mu^b \\
&\qquad\qquad
+\int_{S_R} \left( 
\bU_{(V,-dV)}(e,\eta) - \bU_{(V,-dV)}( \bar{e},\bar{\eta})
\right) (\nu)\, d\mu^b.
\end{split} \]
Now suppose that $(g,\pi)$ is fixed and that $\delta$ and 
$(\bar{g},\bar{\pi})$ are such that \eqref{decayforcont1} and 
\eqref{decayforcont3} hold. Then by assumption \eqref{decayforcont3} the 
absolute value of the first integral over $\bH^n \setminus B_{R}$ is 
bounded by $C \delta$ for some $C>0$ depending only on $(g,\pi)$. The same 
is true for the second integral over $\bH^n \setminus B_{R}$ by assumption 
\eqref{decayforcont1} combined with the fact that the remainder term 
$\mathcal{Q}(e,\eta)$ in \eqref{eqLinear0} is at least quadratic in 
$e$ and $\eta$ and their derivatives of order up to 2 and 1 respectively. 
As for the inner boundary integral, we see that its absolute value is 
bounded by $C\delta e^{(n-\tau) R}$ for $C>0$ depending only on $(g,\pi)$. 
From this it is clear that $\delta$ can be chosen so that the sum of the 
absolute values of these three integrals is less than $\varepsilon$.
\end{proof}

\section{Perturbation to strict inequality in the dominant energy condition}
\label{secStrictDEC}

This section is devoted to the following result. 

\begin{theorem} \label{thStrictDEC}
Let $(M,g,\pi)$ be an asymptotically hyperbolic initial data set of type
$(k,\alpha, \tau, \tau_0)$ for $k\geq 2$, $0<\alpha<1$, $\tfrac{n}{2} < \tau < n$, 
and $\tau_0 > 0$. Assume that $(M,g,\pi)$ satisfies the dominant energy 
condition, $\mu \geq |J|_{g}$. Then for every $\varepsilon>0$ there exists an asymptotically hyperbolic initial data set $(\bar{g}, \bar{\pi})$, with the energy and momentum density denoted by $(\mubar,\Jbar)$, of type $(k, \alpha,\tau,\tau_0')$ for some $\tau_0'>0$ such that
\[
\|g-\bar{g}\|_{C^{k,\alpha}_{\tau}} < \varepsilon,
\qquad
\|\pi - \bar{\pi}\|_{C^{k-1,\alpha}_\tau} < \varepsilon,
\]
and the strict dominant energy condition 
\[
\bar{\mu} > (1+\gamma) |\bar{J}|_{\bar{g}}
\]
holds for a constant $\gamma>0$, and
\[
\left|
\mathcal{M}_{(g,\pi)}(V) - \mathcal{M}_{(\bar{g},\bar{\pi})}(V)
\right|< \varepsilon
\]
for $V\in \{V_{(0)},V_{(1)},\ldots,V_{(n)}\}$. 
\end{theorem}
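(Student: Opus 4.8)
The plan is to follow the strategy of Eichmair--Huang--Lee--Schoen adapted to the hyperbolic setting. First I would reduce the problem to the case where the initial data already satisfies a \emph{strict} inequality $\mu > |J|_g$ on some compact set, since where $\mu = |J|_g$ can fail is exactly where the perturbation must push things up. The idea is to perturb only $\mu$ (or equivalently, to perturb the scalar curvature via a conformal change of $g$) in a controlled way. Concretely, for small $t>0$ I would look for $\bar g = g + t h$ and $\bar \pi = \pi + t w$, or more efficiently a conformal change $\bar g_t = \phi_t^{4/(n-2)} g$ with $\phi_t = 1 + t \zeta$, and compute the first-order change in $(\mu, J)$. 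Under a pure conformal change with $\pi$ fixed, $J = \divg^g \pi$ changes only through the change of $\divg^g$, and one can arrange the dominant term in $\delta \mu$ to come from $-\delta \scal^g$, which for the conformal Laplacian is $-\tfrac{4(n-1)}{n-2}\phi^{-(n+2)/(n-2)}(\Delta^g \phi - \tfrac{n-2}{4(n-1)}\scal^g\phi)$ plus lower order. The point is that by choosing $\zeta$ to be a suitable positive function (solving a linear elliptic equation of the form $L\zeta = $ something strictly negative, where $L$ is essentially the conformal Laplacian shifted by $n(n-1)$ to account for the hyperbolic background) one gains $\bar\mu - \mu \geq c\, t$ pointwise for some $c>0$, while $\bar J - J = O(t)$ in a way that is \emph{dominated} by the gain in $\mu$. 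Then $\bar\mu - |\bar J|_{\bar g} \geq (\mu - |J|_g) + ct - C t \geq 0$ if $c$ is made large enough relative to the perturbation of $J$, and in fact the inequality becomes strict, giving the claimed $\bar\mu > (1+\gamma)|\bar J|_{\bar g}$ for a small $\gamma > 0$.

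The technical heart is the solvability and decay of the elliptic equation for $\zeta$. Here I would invoke the Fredholm theory for geometric elliptic operators on asymptotically hyperbolic manifolds developed in Appendix~\ref{appFredholm}: the relevant operator is (a lower-order perturbation of) $\Delta^g - n$, which is an isomorphism $C^{2,\alpha}_\delta \to C^{0,\alpha}_\delta$ for $\delta$ in a suitable range avoiding the indicial roots, in particular for $\delta$ slightly larger than $\tau$ — this is exactly why one wants $\tfrac{n}{2} < \tau < n$, so that there is room between $\tau$ and the critical weight $n$. Choosing the source term to live in $C^{0,\alpha}_{\tau}$ but designing it to actually lie in a faster-decaying weighted space near infinity (it is constructed by hand and can be taken compactly supported, or at least supported away from infinity, or decaying like $e^{-(n+\tau_0')r}$), one gets $\zeta \in C^{2,\alpha}_{\tau'}$ for some $\tau' > \tau$, hence the perturbation $\bar g - g$ and $\bar\pi - \bar g - (g - g) = \bar\pi - \pi - (\bar g - g)$ decays faster than $e^{-\tau r}$ and in particular the $C^{2,\alpha}_\tau$ and $C^{1,\alpha}_\tau$ norms can be made $< \varepsilon$ by taking $t$ small. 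One also needs to check that $(\bar\mu, \bar J) \in C^{0,\alpha}_{n+\tau_0'}$ for some $\tau_0' > 0$, which follows because the perturbation of $(\mu, J)$ is built from $\zeta$ and its derivatives times curvature terms, all of which decay at the improved rate; the new $\tau_0'$ may be smaller than $\tau_0$ but stays positive.

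Finally, continuity of the mass. By Proposition~\ref{propContinuity}, since $\|\bar g - g\|_{C^2_\tau}$, $\|\bar\pi - \pi\|_{C^1_\tau}$, and $\|(\bar\mu,\bar J) - (\mu,J)\|_{C^0_{n+\tau_0''}}$ are all $O(t)$ (for the last one using the common weight $\tau_0'' = \min(\tau_0, \tau_0')$), choosing $t$ small enough makes $|\mathcal{M}_{(g,\pi)}(V) - \mathcal{M}_{(\bar g,\bar\pi)}(V)| < \varepsilon$ for each $V \in \{V_{(0)}, \dots, V_{(n)}\}$ simultaneously. The main obstacle I anticipate is getting the signs and the ordering of magnitudes right in the estimate $\bar\mu - |\bar J|_{\bar g} > (1+\gamma)|\bar J|_{\bar g}$: one must ensure the first-order gain in $\mu$ genuinely dominates the first-order change in $|J|_{\bar g}$ uniformly on all of $M$, including the asymptotic region where both $\mu - |J|_g$ and the gain are decaying — this is where the precise choice of the weight for the source term, and possibly a preliminary localization splitting $M$ into a compact core and an asymptotic end, becomes delicate. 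A secondary subtlety is that a pure conformal change does not by itself produce the conformally hyperbolic asymptotics promised later; that refinement is deferred to Section~\ref{secConfHyp}, so here one only needs the strict DEC plus smallness plus mass control, which the conformal perturbation delivers.
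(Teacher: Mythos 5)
Your proposal captures the outer shell of the argument (a conformal-type deformation, Fredholm theory for $\Delta^g - n$, and Proposition~\ref{propContinuity} for the mass), but it misses the one idea that makes the theorem work, and as written the key estimate fails. The difficulty is not to make $\mu$ gain $tf/2$ at first order --- that part is easy --- but to control the \emph{first-order} change of $|J|_{\bar g}$. Under your conformal perturbation the momentum density changes at order $t$ by terms of the schematic form $\nabla\zeta\star\pi$, which decay like $e^{-(\delta'+\tau)r}$ where $\delta'$ is the decay rate of $\zeta$ (at best $\delta'=n$, the indicial root), and on the compact core are simply $O(t)$ with a constant you do not control. At a point where $\mu=|J|_g$ (in particular where both vanish), you need the gain $tf/2$ to beat this first-order change pointwise; there is no reason it does, neither on the compact part (the constants are unrelated) nor near infinity if $f$ is compactly supported or decays faster than $e^{-(\delta'+\tau)r}$. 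Saying ``$c$ is made large enough relative to the perturbation of $J$'' does not resolve this, because both contributions are $O(t)$ and the comparison is pointwise against a prescribed weight. The paper's resolution is to \emph{prescribe} the first-order change of $J$ to be exactly $\tfrac12 h^l_j J_l$, where $h$ is the first-order change of the metric; then in the expansion \eqref{eqSloppy} of $|\bar J|^2_{\bar g}$ the order-$t$ terms cancel identically, so $|\bar J|_{\bar g}=|J|_g+O(t^2f)$, after which $\bar\mu-|\bar J|_{\bar g}\geq tf/3-tf/4>0$. This forces one to solve the coupled system $DT|_{(1,0)}(v,Z)+D\Phi|_{(g,\pi)}(h,w)=(-f,\tfrac12 h^l_jJ_l)$ rather than a single scalar equation for a conformal factor.

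Two further gaps. First, solving that system requires more than the Fredholm-index-zero property of $\Delta^g-n$: the linearization $DT|_{(1,0)}$ may have a cokernel, and the paper fills it with compactly supported tensors $(h,w)$ using the surjectivity of $A(h,w)=D\Phi|_{(g,\pi)}(h,w)-(0,\tfrac12 h^l_jJ_l)$ (Lemma~\ref{lemA}), whose proof --- an adjoint-kernel computation, decay bootstrapping, and unique continuation --- is the technical heart of the section; your proposal asserts an isomorphism without addressing injectivity or surjectivity. Second, $f$ cannot be compactly supported or chosen freely fast-decaying: one needs $f>0$ everywhere with $\sup_M(|J|_g/f)<\infty$ (hence the choice $f=e^{-(n+\min\{1,\tau_0\})r}$ near infinity), both to obtain the uniform $\gamma$ in $\bar\mu>(1+\gamma)|\bar J|_{\bar g}$ and to absorb the quadratic remainders, which are only $O(t^2f)$, not better.
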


The argument follows \cite[Proof of Theorem~22]{SpacetimePMT}. In simple 
terms it can be described as follows. We would like to choose
symmetric 2-tensors $h$ and $w$ so that the perturbed initial data
$\bar{g} = g + t h$ and $\bar{\pi} = \pi + t w$ satisfies
$\bar{\mu} > |\bar{J}|_{\bar{g}}$ for sufficiently small $t>0$. From the 
Taylor expansion
$\Phi(\bar{g},\bar{\pi}) = \Phi(g,\pi) + t D\Phi|_{(g,\pi)}(h,w) + O(t^2)$,
we see that $\bar{\mu} = \mu + \frac{t}{2} f + O(t^2)$ and
$\bar{J} = J + t X + O(t^2)$, where $(-f,X) = D\Phi|_{(g,\pi)} (h,w)$.
Further,
\begin{equation} \label{eqSloppy}
\begin{split}
|\bar{J}|_{\bar{g}}^2
&=
\bar{g}^{ij} \bar{J}_i \bar{J}_j \\
&=
(g^{ij} - t h^{ij} + O(t^2)) (J_i + t X_i + O(t^2)) (J_j + t X_j + O(t^2)) \\
&=
|J|_g^2 + t(2 X^j - h^{ij} J_i)J_j + O(t^2),
\end{split}
\end{equation}
where indices are raised using the metric $g$. Hence if we set
$X^j = \tfrac{1}{2} h^{ij} J_i$ then $|\bar{J}|_{\bar{g}} = |J|_g + O(t^2)$, 
as long as the decay of $|J|_g^2$ at infinity is not faster than that of 
the $O(t^2)$ term in the last line of \eqref{eqSloppy}. This leads to the 
expectation that $\bar{\mu} > |\bar{J}|_{\bar{g}}$ will be achieved if we 
can find a pair $(h,w)$ such that $D \Phi|_{(g,\pi)} (h,w) = (-f,X)$, where 
$X^j = \tfrac{1}{2} h^{ij} J_i$ and $f>0$. Indeed, in this case we have
\[
\bar{\mu} - |\bar{J}|_{\bar{g}} 
= \mu - |J|_g + t f + O (t^2) \geq t f + O(t^2) > 0 
\]
provided that the $O(t^2)$ term above decays at least as fast as $f$ at 
infinity. 

However, $D \Phi|_{(g,\pi)}$ is not a determined elliptic operator 
(see for example Delay \cite{Delay}), and this makes it difficult to ensure that the 
solutions of the equation $D \Phi|_{(g,\pi)} (h,w) = (-f,X)$ will have 
good asymptotic behaviour. This problem can be overcome by combining the 
above considerations with a certain construction introduced by Corvino 
and Schoen in their proof of the density result in
\cite[Theorem~1]{CorvinoSchoen}. The idea is similar in spirit to the 
conformal method of solving the constraint equations (see for example 
\cite[Section~4.1]{BartnikIsenberg}) and is based on the observation that 
by suitably choosing a first order differential operator $\mathcal{D}$ 
 one can ensure that the linearization at $(1,0)$ of the operator 
\begin{equation} \label{eqElliptic}
(u, Y) \mapsto 
\Phi \left(u^{\tfrac{4}{n-2}} g, u^{\tfrac{2}{n-2}}(\pi + \mathcal{D}Y)\right),
\end{equation}
is a second order elliptic operator with nice properties.

We begin the proof of Theorem~\ref{thStrictDEC} with some preliminaries. 
Set $\kappa \definedas \tfrac{4}{n-2}$.
For $(u-1, Y) \in C^{2,\alpha}_{\tau}$ we let
\begin{equation} \label{eqDeformedID}
\gtil = u^\kappa g,
\qquad \text{and} \qquad
\pitil = u^{\kappa/2} (\pi + \rlie_Y g),
\end{equation}
where $\rlie$ is the conformal Killing operator described in
Section~\ref{secDefAH}. Our choice of the
operator $\mathcal{D} = \rlie$ in \eqref{eqElliptic} is motivated by 
the fact that the \emph{vector Laplacian} $\Delta_L = \divg \rlie$ is 
a well-known elliptic operator on asymptotically hyperbolic manifolds 
whose Fredholm properties (see Appendix \ref{appFredholm}) fit nicely 
into the context of the current argument. Let $\mutil$ and $\Jtil$ be
the energy and momentum densities of $(\gtil, \pitil)$ computed via the
constraint equations \eqref{eqConstraints1}--\eqref{eqConstraints2} and
consider the operator
\[
T(u,Y) = (- 2 u^\kappa \mutil, u^{\kappa/2} \Jtil ).
\]
This conformal rescaling of the constraint equations is needed to ensure 
that the dominant energy condition scales correctly when we pass to the 
deformed initial data set \eqref{eqDeformedID}, see \eqref{eqBoundMu} and 
\eqref{eqBoundJ} below.

It is straightforward to check that
\begin{equation} \label{eqRescConstraints}
\begin{split}
-2 u^\kappa \mutil
&=
\tfrac {4(n-1)}{n-2} u^{-1} \Delta^g u - \scal^g - n(n-1) u^{\kappa}
+ 2 u^{\kappa/2} \tr^g \pi - \tfrac{1}{n-1} (\tr^g \pi)^2 \\
&\qquad
+ \left( |\pi|_g^2 + 2 \langle \pi, \rlie_Y g \rangle
+ |\rlie_Y g|_g^2 \right), \\
u^{\kappa/2} \Jtil_j
&=
(\Delta_L Y + \divg ^g \pi)_j
+ \tfrac{2(n-1)}{n-2} u^{-1} (\pi + \rlie_Y g )^k_j \nabla_k u
- \tfrac{2}{n-2} u^{-1} \nabla_j u \tr^g \pi,
\end{split}
\end{equation}
for $j=1,2, \dots, n$. Consequently, the linearization
of $T$ at $(1,0)$ is
\begin{equation} \label{eqLinearT} 
\begin{split}
DT|_{(1,0)} (v,Z)
&=
\Big( \tfrac{4(n-1)}{n-2} (\Delta^g v - n v) + \tfrac{4}{n-2}(\tr^g \pi) v
+ 2 \langle \pi, \rlie_Z g \rangle, \\
&\qquad
(\Delta_L Z)_j + \tfrac{2(n-1)}{n-2}\pi^k_j \nabla_k v
- \tfrac{2}{n-2}(\tr^g \pi) \nabla_j v \Big),
\end{split} 
\end{equation}
for $j=1,2, \dots, n$. The following lemma concerns Fredholm properties 
of the operator $DT|_{(1,0)}$.

\begin{lemma} \label{lemDT}
If $(M,g,\pi)$ is an asymptotically hyperbolic initial 
data set of type $(k,\alpha,\tau)$ for $k\geq 2$, $0<\alpha<1$ and $\tau>0$
then $DT|_{(1,0)}$ is a Fredholm operator with index zero in the following
cases:
\begin{itemize}
\item
as a map $C^{l,\beta}_\delta \to C^{l-2,\beta}_\delta$ for $2\leq l \leq k$, $0 < \beta \leq \alpha$, 
$-1<\delta<n$, 
\item
as a map $W^{l,p}_\delta \to W^{l-2,p}_\delta$ for $2\leq l \leq k$,  $1 < p< \infty$, 
$-1 < \delta + \tfrac{n-1}{p} < n $. 
\end{itemize}
\end{lemma}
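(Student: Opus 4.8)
The plan is to read off the structure of $DT|_{(1,0)}$ from \eqref{eqLinearT} and observe that it is a lower-triangular (up to lower-order terms) system coupling the scalar equation for $v$ and the vector equation for $Z$. The principal part is block-diagonal: the $(v,v)$-block is $\tfrac{4(n-1)}{n-2}(\Delta^g - n)$ acting on functions, and the $(Z,Z)$-block is the vector Laplacian $\Delta_L = \divg^g \rlie$ acting on $1$-forms (or vector fields). The off-diagonal terms — $2\langle \pi, \rlie_Z g\rangle$ in the first slot and $\tfrac{2(n-1)}{n-2}\pi^k_j\nabla_k v - \tfrac{2}{n-2}(\tr^g\pi)\nabla_j v$ in the second — involve at most one derivative of $v$ and no derivatives of $Z$; since $\pi \in C^{1,\alpha}_\tau$ decays (indeed $\tau>0$), these are genuinely lower-order, and in fact \emph{compact}, perturbations in the relevant weighted spaces. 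Hence it suffices to establish that the diagonal operators $\Delta^g - n$ (on functions) and $\Delta_L$ (on $1$-forms) are Fredholm of index zero on the stated spaces, and then invoke stability of the Fredholm index under compact (relatively lower-order) perturbations.

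First I would record that $(M,g)$ is $C^{2,\alpha}_\tau$-asymptotically hyperbolic and $g - b \in C^{2,\alpha}_\tau$ with $\tau>0$, so both $\Delta^g - n$ and $\Delta_L^g$ are geometric elliptic operators on $M$ whose coefficients converge to those of the model operators $\Delta^b - n$ and $\Delta_L^b$ on $\bH^n$ at an exponential rate. As remarked in Section~\ref{secDefAH}, the Fredholm theory of Lee \cite{LeeFredholm} (via the improved elliptic regularity established in Appendix~\ref{appFredholm} and the adaptation of \cite[Theorem~C]{LeeFredholm}) applies verbatim in this setting: a geometric elliptic operator asymptotic to such a model is Fredholm on $C^{2,\beta}_\delta \to C^{0,\beta}_\delta$ and on $W^{2,p}_\delta \to W^{0,p}_\delta$ precisely for weights $\delta$ avoiding the discrete set of indicial (``exceptional'') values, with index computed by the model operator on $\bH^n$. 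For the scalar operator $\Delta^b - n$ the indicial roots are the well-known values $r \mapsto e^{-\delta r}$ with $\delta(\delta - n) - (-n) \cdot(\ldots) $ — concretely the exceptional weights are $\delta = 0$ and $\delta = n$ (the homogeneous solutions behave like $e^{0}$ and $e^{-nr}$), so on the open interval $-1 < \delta < n$ (resp. $-1 < \delta + \tfrac{n-1}{p} < n$) the operator is Fredholm; I would then note that on this interval the index is zero because $\Delta^b - n$ is formally self-adjoint and the relevant weight interval is symmetric about $\delta = \tfrac{n}{2}$ in the sense required by Lee's index formula (the $L^2$-weight $\delta_0$ with $2\delta_0 + \ldots$ landing in the admissible range), so kernel and cokernel weights are dual and the index vanishes. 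The same analysis applies to $\Delta_L^b$ on $1$-forms: its indicial roots are again an explicit finite set of real numbers determined by the spherical-harmonic decomposition on $S^{n-1}$, none of which lie in $(-1,n)$ after the stated shift, and formal self-adjointness of $\Delta_L = \divg \rlie$ (it is $(\rlie)^*\rlie$ up to sign, hence symmetric) forces index zero on the symmetric weight window. I would cite the computations of these indicial roots as standard — e.g. Lee \cite{LeeFredholm} for the scalar Laplacian and the analogous sphere-decomposition calculation for $\Delta_L$, which also underlies the statement in Appendix~\ref{appFredholm} — rather than reproduce them.

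Having the diagonal blocks Fredholm of index zero, I would then argue that $DT|_{(1,0)}$ differs from the block-diagonal operator $(v,Z) \mapsto \big(\tfrac{4(n-1)}{n-2}(\Delta^g v - nv),\ (\Delta_L Z)_j\big)$ by the operator $(v,Z) \mapsto \big(\tfrac{4}{n-2}(\tr^g\pi)v + 2\langle\pi,\rlie_Z g\rangle,\ \tfrac{2(n-1)}{n-2}\pi^k_j\nabla_k v - \tfrac{2}{n-2}(\tr^g\pi)\nabla_j v\big)$. Because $\pi \in C^{1,\alpha}_\tau$ with $\tau>0$, this second operator maps $C^{2,\beta}_\delta \to C^{0,\beta}_{\delta+\tau}$ continuously (and likewise $W^{2,p}_\delta \to W^{0,p}_{\delta+\tau}$), and the inclusion $C^{0,\beta}_{\delta+\tau} \hookrightarrow C^{0,\beta}_\delta$ (resp. the Sobolev analogue) is compact by the weighted Rellich lemma \cite[Lemma~3.6]{LeeFredholm}; composing, the perturbation is a compact operator $C^{2,\beta}_\delta \to C^{0,\beta}_\delta$. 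Fredholmness and index are invariant under compact perturbation, so $DT|_{(1,0)}$ is Fredholm of index zero on exactly the same weight ranges. This gives both bullet points simultaneously. The main obstacle — and the only genuinely nontrivial input — is the Fredholm theory for the diagonal model operators on the weighted spaces, i.e. identifying the indicial roots of $\Delta^b - n$ and $\Delta_L^b$ and confirming they are absent from the stated intervals while the index vanishes there by self-adjointness; but this is precisely what the adaptation of \cite{LeeFredholm} in Appendix~\ref{appFredholm} supplies, so in the write-up I would state it as a consequence of that appendix together with the compact-perturbation argument above.
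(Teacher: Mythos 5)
Your proposal follows essentially the same route as the paper: split $DT|_{(1,0)}$ into the block-diagonal operator $P_0=(v,Z)\mapsto\bigl(\tfrac{4(n-1)}{n-2}(\Delta^g v-nv),\Delta_L Z\bigr)$, which is Fredholm of index zero on the stated weight ranges by Proposition~\ref{propModelOperators}, plus the $\pi$-dependent remainder, which gains a factor $e^{-\tau r}$ and is therefore compact by \cite[Lemma~3.6]{LeeFredholm} combined with the weighted Rellich lemma. One small correction: the characteristic exponents of $\Delta^b-n$ are $-1$ and $n$ (from $(s-n)(s+1)=0$), not $0$ and $n$ as you wrote; with your values the interval $-1<\delta<0$ would be excluded, whereas the correct roots give exactly the Fredholm range $(-1,n)$ claimed in the lemma.
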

\begin{proof}
We give the proof in the case of weighted H\"older spaces, the case
of weighted Sobolev spaces is treated similarly. Write
$DT|_{(1,0)} = P_0 + P_1$, where
\[
P_0: (v,Z) \mapsto 
\left(\tfrac{4(n-1)}{n-2} (\Delta^g v - n v), \Delta_L Z\right),
\]
and 
\[
P_1: (v,Z) \mapsto
\left(\tfrac{4}{n-2}(\tr^g \pi) v + 2\langle \pi, \rlie_Z g \rangle,
\tfrac{2(n-1)}{n-2}\pi^k_j \nabla_k v
- \tfrac{2}{n-2}(\tr^g \pi) \nabla_j v \right).
\]
Here $P_0: C^{l,\alpha}_\delta \to C^{l-2,\alpha}_\delta$ is a Fredholm operator
of index zero for $\delta \in (-1,n)$, see Proposition
\ref{propModelOperators}. By \cite[Lemma~3.6 (a)]{LeeFredholm} the map
$P_1: C^{l,\alpha}_\delta \to C^{l-1,\alpha}_{\delta+\tau}$ is continuous, whereas
by the Rellich Lemma, \cite[Lemma~3.6 (d)]{LeeFredholm}, the inclusion
$C^{l-1,\alpha}_{\delta+\tau} \hookrightarrow C^{l-2,\alpha}_\delta$ is compact. We
conclude that $P_1: C^{l,\alpha}_\delta \to C^{l-2,\alpha}_\delta$ is compact for
$-1<\delta<n$, and the claim follows.
\end{proof}

Recall that the constraint map $\Phi$ is defined by the formula
\eqref{eqConstraintMap2}. A direct computation shows that the
linearization of $\Phi$ is
\[ \begin{split}
D\Phi|_{(g,\pi)} (h,w)
&=
\bigg( \Delta^g (\tr^g h) -\divg^g \divg^g h + \langle h,
\ric^g \rangle \\
&\qquad
+ 2\left(1-\frac{\tr^g\pi}{n-1}\right)
\left(\tr^g w - \langle h,\pi \rangle \right) - 2 \langle h ,
\pi \circ \pi\rangle + 2 \langle \pi, w \rangle, \\
&\qquad
(\divg^g w)_k - h^{ij} \nabla_i \pi_{jk}
- (\divg h)_j \pi^j_k + \tfrac{1}{2} \nabla_j (\tr^g h) \pi^j_{k}
- \tfrac{1}{2} \pi^{ij} \nabla_k h_{ij} \bigg),
\end{split} \]
where $(\pi \circ \pi)_{ij} = g^{kl} \pi_{ik} \pi_{jl}$. The formal 
adjoint of $D\Phi$ is given by
\begin{equation} \label{eqAdjoint}
\begin{split}
D\Phi_{(g,\pi)}^*(V, X)
&=
\bigg( (\Delta V) g_{ij} - \nabla_i \nabla_j V + V \ric_{ij}
- 2 V \left(1-\frac{\tr^g\pi}{n-1}\right) \pi_{ij} \\
&\quad
-2 V \pi_{ik}\pi^k_j
+\tfrac{1}{2}(\pi_{jk} \nabla_i X^k + \pi_{ik} \nabla_j X^k)
-\tfrac{1}{2} (\divg \pi)_k X^k g_{ij} \\
&\quad
-\tfrac{1}{4} \langle \pi, \cL_X g \rangle g_{ij}
+ \tfrac{1}{2} X^k \nabla_k \pi_{ij} + \tfrac{1}{2} (\divg X) \pi_{ij}, \\
&\qquad
-\tfrac{1}{2} (\cL_X g)_{ij}
+ 2 V \left(1-\frac{\tr^g\pi}{n-1}\right) g_{ij} + 2V \pi_{ij} \bigg).
\end{split}
\end{equation}
The following lemma is the analogue of \cite[Lemma~20]{SpacetimePMT} in 
the asymptotically hyperbolic setting. The proof of the cited lemma is 
similar to \cite[Proposition~3.1]{CorvinoSchoen}.
\begin{lemma} \label{lemA}
If $(M,g,\pi)$ is an asymptotically hyperbolic initial data set of type
$(k,\alpha,\tau)$ for $k\geq 2$, $0<\alpha<1$ and $\tau>0$ then the linear map
$A: W^{2,p}_{\delta}\times W^{1,p}_{\delta}\rightarrow W^{0,p}_{\delta}$ defined
by
\[
A(h,w) = D\Phi|_{(g,\pi)} (h,w) - (0,\tfrac{1}{2} h^l_j J_l)
\]
is surjective for $1 < p < \infty$ and $-1 < \delta + \tfrac{n-1}{p} < n$.
In particular,
$D\Phi|_{(g,\pi)}:W^{2,p}_{\delta} \times W^{1,p}_{\delta}\rightarrow W^{0,p}_{\delta}$
is surjective for $1 < p < \infty$ and $-1 < \delta + \tfrac{n-1}{p} < n$.
\end{lemma}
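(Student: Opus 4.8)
The plan is to prove surjectivity of $A$ by the standard duality argument: a bounded linear operator between reflexive Banach spaces (here between weighted Sobolev spaces) has dense range if and only if its formal adjoint has trivial kernel on the dual weights, and one then upgrades dense range to closed range (hence surjectivity) using the Fredholm/semi-Fredholm structure available in this setting. So first I would identify the formal adjoint $A^*$ of $A$. Since $A(h,w) = D\Phi|_{(g,\pi)}(h,w) - (0, \tfrac12 h^l_j J_l)$ and the extra term is the zeroth-order operator $h \mapsto (0, \tfrac12 h^l_j J_l)$, its adjoint is easily computed: pairing $(0,\tfrac12 h^l_j J_l)$ against $(V,X)$ produces $\tfrac12 X^j J_l$ contracted into $h^l_j$, i.e. a symmetric-2-tensor-valued zeroth order term involving $J = \divg^g\pi$ and $X$. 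Hence $A^*(V,X) = D\Phi^*_{(g,\pi)}(V,X) - (\tfrac12 J_{(i} X_{j)}, 0)$, with $D\Phi^*$ given by \eqref{eqAdjoint}. The key structural point, exactly as in \cite[Lemma~20]{SpacetimePMT} and \cite[Proposition~3.1]{CorvinoSchoen}, is that the troublesome first-order terms in the $S^2M$-component of $D\Phi^*$ that involve $\divg^g \pi = J$ paired against $X$ are precisely cancelled by the new term, so that $A^*$ becomes, up to lower-order terms, a \emph{diagonal} operator whose principal part in the $V$-equation is $(\Delta V)g - \hess V$ and in the $X$-equation is $-\tfrac12 \cL_X g$; both of these are overdetermined-elliptic operators with the unique continuation property.

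Second, I would run the duality argument on the $L^2$-based scale to get injectivity of $A^*$, then transfer. Concretely: the adjoint of $A: W^{2,p}_\delta \times W^{1,p}_\delta \to W^{0,p}_\delta$ with respect to the $L^2$-pairing (weighted by $b$) is $A^*$ acting between the appropriate dual weighted spaces $W^{0,p'}_{\delta'} \to W^{-2,p'}_{\delta'} \times W^{-1,p'}_{\delta'}$, where $p' $ is the conjugate exponent and $\delta'$ is the dual weight. By the weighted Fredholm theory established in Appendix~\ref{appFredholm} (improved elliptic regularity plus the Fredholm theorem adapted from \cite[Theorem~C]{LeeFredholm}), for $-1 < \delta + \tfrac{n-1}{p} < n$ the operator $A$ is semi-Fredholm with closed range, so $A$ is surjective iff $\ker A^* = 0$ in the dual weighted space. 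It therefore suffices to show that any $(V,X)$ in the relevant weighted space with $A^*(V,X)=0$ vanishes. Here elliptic regularity bootstraps $(V,X)$ to be smooth, and one exploits the decoupled structure: the equation forces $(\Delta V)g - \hess V = (\text{lower order in } V, X)$ and $\cL_X g = (\text{lower order})$, which one analyzes just as in \cite[Proposition~3.1]{CorvinoSchoen} — first deduce $X$ is (asymptotically) a Killing field and $V$ an element of $\mathcal N$-type kernel, then use the weight restriction $\delta + \tfrac{n-1}{p} < n$ (equivalently: the dual weight lies strictly below the decay rate of $b$'s Killing fields, which grow like $e^r$) to conclude that the only such solution decaying in the dual space is zero. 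The unique continuation results of Appendix~\ref{secUC} close the argument: since the initial data agrees with $(b,0)$ to leading order only at infinity but $(V,X)$ solves a homogeneous equation with the unique continuation property, global vanishing follows.

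The final sentence of the lemma is immediate: $D\Phi|_{(g,\pi)} = A + (0,\tfrac12 h^l_j J_l)$, and the extra summand is a compact perturbation (a zeroth-order operator mapping $W^{2,p}_\delta$ into $W^{1,p}_{\delta+\tau}$, which embeds compactly into $W^{0,p}_\delta$ by Rellich, \cite[Lemma~3.6]{LeeFredholm}, exactly as in the proof of Lemma~\ref{lemDT}); since $A$ is surjective with closed range and finite-dimensional cokernel zero, adding a compact operator keeps the range closed and of finite codimension, and one checks directly that it does not decrease — in fact, $D\Phi|_{(g,\pi)}(h,w) = A(h,w) + (0,\tfrac12 h^l_j J_l)$ and given any target $(f,X_0)$ one solves $A(h,w) = (f, X_0 - \tfrac12 h^l_j J_l)$ by a fixed-point/continuity argument in $h$ since the correction is lower order — so $D\Phi|_{(g,\pi)}$ is surjective on the same range of weights.

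\medskip

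\textbf{Main obstacle.} The technical heart is the injectivity of $A^*$ on the dual weighted space: one must verify that the cancellation of the $J$-terms really does render $A^*$ diagonal-elliptic up to genuinely lower-order terms (this is a somewhat lengthy but routine computation starting from \eqref{eqAdjoint}), and then carry out the kernel analysis in the asymptotically hyperbolic geometry — identifying the asymptotic Killing/conformal structure, invoking the hyperbolic unique continuation principle from Appendix~\ref{secUC}, and pinning down the precise weight window $-1 < \delta + \tfrac{n-1}{p} < n$ for which the dual-space solution is forced to vanish. The endpoint analysis at the weights $-1$ and $n$ (where elements of $\mathcal N$ or hyperbolic Killing fields enter) is exactly what dictates the stated range, and getting that range sharp is the delicate point.
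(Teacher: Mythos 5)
Your overall architecture (closed range plus triviality of $\ker A^*$ plus unique continuation) matches the paper's, but there are two genuine problems. The most serious is the closed-range step: you assert that $A$ itself is semi-Fredholm ``by the weighted Fredholm theory established in Appendix~\ref{appFredholm}''. That theory (Theorem~\ref{thmFredholm}, adapted from \cite[Theorem~C]{LeeFredholm}) applies only to \emph{elliptic}, formally self-adjoint geometric operators, and $A$ is neither: as a map from pairs of symmetric $2$-tensors to a function plus a $1$-form it is underdetermined, so no appeal to elliptic Fredholm theory gives closed range for $A$ directly. The missing idea --- the heart of the Corvino--Schoen device the paper uses --- is to precompose with $(v,Z)\mapsto(vg,\rlie_Z g)$: the resulting operator $(v,Z)\mapsto A(vg,\rlie_Z g)$ \emph{is} elliptic, with the same leading part $((n-1)(\Delta^g v-nv),\Delta_L Z)$ as in Lemma~\ref{lemDT}, hence Fredholm of index zero on the stated weight range; since its range is contained in the range of $A$, the range of $A$ has finite codimension and is therefore closed. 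Without this step your duality argument cannot get started.

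Second, your ``key structural point'' --- that the extra term $-(0,\tfrac12 h^l_jJ_l)$ cancels the $J$-dependent terms in $D\Phi^*$ and renders $A^*$ diagonal up to lower order --- is not correct. Its adjoint only contributes the zeroth-order term $\tfrac14(X_iJ_j+X_jJ_i)$ to the $S^2M$-component of the adjoint system (see \eqref{eqKerAdjoint}); it cancels nothing (the term $-\tfrac12(\divg\pi)_kX^kg_{ij}$ has a different tensor structure), and it is harmless only because $J\in C^{0,\alpha}_\tau$ makes it lower order anyway. Moreover the adjoint system is genuinely coupled at leading order: the second equation reads $\cL_Xg = 4V(1-\tfrac{\tr^g\pi}{n-1})g + 4V\pi$, with $V$ entering at zeroth order with an $O(1)$ coefficient, and this coupling is exactly what drives the paper's decay bootstrap ($V$ is forced to decay first via the radial ODE $\partial^2_{rr}V-V=\ftil$, after which $X_r$ and $X_\mu$ are integrated out of the Killing-type equation to reach infinite-order vanishing, as required by the Carleman-based unique continuation of Appendix~\ref{secUC}). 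Your sketch of the kernel analysis, and of the final ``in particular'' claim (where a compact-perturbation/fixed-point argument is both shaky and unnecessary --- the clean route is to rerun the identical duality argument for $D\Phi|_{(g,\pi)}$ itself), would need to be fleshed out along these lines, but the closed-range gap is the one that must be repaired first.
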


\begin{proof} 
The first step is to show that $A$ has closed range. For this we compute
\[ \begin{split}
A(vg, \rlie_Z g) 
&=
\Big( (n-1) (\Delta^g v -n v) + (\scal^g + n(n-1))v \\ 
&\qquad
- 2 v (\tr^g \pi -\tfrac{1}{n-1}(\tr^g\pi)^2 + |\pi|^2_g) 
+ 2\langle \pi, \rlie_Z g\rangle , \\ 
&\qquad (\Delta_L Z)_i - v (\divg^g \pi )_i + 
(\tfrac{n}{2} - 1) \pi^j_i \nabla_j v
- \tfrac{1}{2} \tr^g \pi \nabla_i v 
- \tfrac{1}{2} v J_j \Big).
\end{split} \]
Reasoning as in the proof of Lemma~\ref{lemDT} we conclude that the
operator 
\[
(v,Z) \mapsto A(vg, \rlie_Z g)
\]
is a Fredholm operator $W^{2,p}_{\delta} \rightarrow W^{0,p}_{\delta}$ for 
$1 < p < \infty$ and $-1 < \delta + \tfrac{n-1}{p} < n$. Its range is
contained in the range of the operator $A$. Consequently, the range of 
the operator $A$ has finite codimension in $W^{0,p}_{\delta}$, and hence 
it is closed.
 
Next we need to show that $\ker A^*$ is trivial. Let $p^*$ be such that 
$\frac{1}{p} + \frac{1}{p^*} = 1$. Then $W^{0,p^*}_{-\delta}$ is dual to 
$W^{0,p}_{\delta}$ under the standard $L^2$ pairing, see 
\cite[Chapter~3]{LeeFredholm}. Note that we have 
$-1 < - \delta + \tfrac{n-1}{p^*} < n$ as a consequence of 
$-1 < \delta + \tfrac{n-1}{p} < n$. It follows from \eqref{eqAdjoint} 
that $\ker A^*$ consists of $(V,X) \in W^{0,p^*}_{-\delta}$ such that 
\begin{equation} \label{eqKerAdjoint}
\begin{split}
(\Delta V) g_{ij} - \nabla_i \nabla_j V + V \ric_{ij}
&=
2 V \left(1-\frac{\tr^g\pi}{n-1}\right) \pi_{ij} + 2 V \pi_{ik}\pi^k_j \\
&\qquad
-\tfrac{1}{2} (\pi_{jk} \nabla_i X^k + \pi_{ik} \nabla_j X^k ) \\
&\qquad
+ \tfrac{1}{2} (\divg \pi)_k X^k g_{ij} + \tfrac{1}{4} \langle \pi,
\cL_ X g \rangle g_{ij} \\
&\qquad
-\tfrac{1}{2} X^k \nabla_k \pi_{ij} - \tfrac{1}{2} (\divg X) \pi_{ij} \\
&\qquad
+ \tfrac{1}{4} ( X_i J_j + X_j J_i ), \\
\cL_X g
&=
4 V \left(1-\frac{\tr^g\pi}{n-1}\right) g + 4 V \pi.
\end{split}
\end{equation}
As a consequence of the second equation we have 
$\cL_X g\in W^{0,p^*}_{-\delta}$. Taking the trace of the first equation 
we have 
\begin{equation} \label{eqTrace1}
\Delta V - n V
=
V \star O^\alpha (e^{-\tau r}) + X \star O^\alpha (e^{-\tau r})
+ \cL_X g \star O^\alpha (e^{-\tau r}),
\end{equation}
where $O^\alpha (e^{-\tau r})$ denotes a section $T$ of some geometric tensor bundle of appropriate type such that $T \in C^{0,\alpha}_{\tau}$, and $A \star B$ denotes a tensor which is obtained from $A\otimes B$ by raising and lowering indices, taking a number of contractions, and switching a number of components in the product. By standard elliptic regularity \cite[Lemma~4.8 (a)]{LeeFredholm} we conclude that $V \in W^{2,p^*} _{-\delta}$. As a consequence of the second equation in
\eqref{eqKerAdjoint} we have 
\[
\rlie_X g = 4V \left( \pi - \frac{\tr^g \pi}{n} g \right).
\]
Taking the divergence, we obtain 
\begin{equation} \label{eqVectLaplacian}
\Delta_L X = 
V \star O^\alpha (e^{-\tau r}) + \nabla V \star O^\alpha (e^{-\tau r}).
\end{equation}
Thus $X \in W^{2,p^*} _{-\delta}$, again by standard elliptic regularity.
Since $(V,X) \in W^{2,p^*}_{-\delta}$ the right hand sides of equations
\eqref{eqTrace1} and \eqref{eqVectLaplacian} are both in 
$W^{0,p^*}_{-\delta+\tau}$. Using Proposition~\ref{propModelOperators}, 
improved elliptic regularity \cite[Proposition~6.5]{LeeFredholm}, 
and the continuity of the embedding 
$W^{k,p^*}_\varepsilon \hookrightarrow W^{k,p^*}_{\varepsilon'}$ for 
$\varepsilon > \varepsilon'$, we conclude that $(V,X) \in W^{2,p^*}_\gamma$ for 
any $\gamma$ such that $-1 < \gamma + \tfrac{n-1}{p^*} < n$. Therefore we 
may without loss of generality assume that 
$1 < \gamma < n - \tfrac{n-1}{p^*} = 1 + \tfrac{n-1}{p}$. 

In fact, we can show that $(V,X) \in C^{2,\beta}_\gamma$ for some $0<\beta<1$. 
Indeed, if $p^* < n$ then $(V,X) \in W^{2,\tfrac{n p^*}{n-p^*}}_\gamma$ by the 
Sobolev embedding theorem \cite[Lemma~3.6~(c)]{LeeFredholm} and standard 
elliptic regularity applied to equations \eqref{eqTrace1} and 
\eqref{eqVectLaplacian}. Repeating this argument we obtain that 
$(V,X) \in W^{2,q}_\gamma$ for some $q>n$ and thus
$(V,X) \in C^{1,\beta}_\gamma$ for some $0<\beta<1$ by Sobolev embedding
\cite[Lemma~3.6~(c)]{LeeFredholm}. Applying standard elliptic regularity
to the equations \eqref{eqTrace1} and \eqref{eqVectLaplacian} we conclude 
that $(V,X) \in C^{2,\beta}_\gamma$.

Next we show that $(V,X)$ vanishes to infinite order at infinity. That
is, $(V,X) = O(e^{-Nr})$ for any $N>0$. As a consequence of
\eqref{eqKerAdjoint} and Definition~\ref{defAHdata} we see that $(V,X)$
is a solution to the system
\begin{align*}
\hess^b V - V b
&=
V \star O(e^{-\tau r}) + \nabla V \star O(e^{-\tau r}) + X \star O (e^{-\tau r})
+ \nabla X \star O (e^{-\tau r}),\\
\cL_X b
&= 
4 V b + X \star O_1 (e^{-\tau r}) + V \star O_1 (e^{-\tau r}),
\end{align*}
where $O_1(e^{-\tau r})$ denotes a section $T$ of the appropriate geometric
tensor bundle such that $T\in C^1_\tau$. From the first equation and the
fact that $(V,X) \in C^{2,\beta}_\gamma$ we conclude that $V$ satisfies the 
ordinary differential equation
\[
\partial^2_{rr} V - V = \ftil
\]
along radial geodesic rays, where $\ftil = O (e^{-(\tau + \gamma)r})$.
Since $\tau + \gamma > 1$, it follows that $V = O (e^{-(\tau + \gamma)r})$, 
see formula \eqref{eqVarPar} for the explicit form of the solution. 
Then $V \in C^{2,\beta}_{\tau + \gamma}$ by standard elliptic regularity 
applied to \eqref{eqTrace1}. Combining this with the second equation, 
we see that $(\cL_X b)_{rr} = O(e^{-(\tau + \gamma)r})$, which yields
$\partial_r X_r = O(e^{-(\tau + \gamma)r})$. Integrating this relation from
$r$ to $\infty$, we obtain that $X_r = O(e^{-(\tau + \gamma)r})$.
Note that as a consequence of this relation we also have $\partial_\mu X_r = O(e^{-(\tau + \gamma) r})$, 
which can be seen by first differentiating with respect to $\mu$ and then integrating from $r$ to $\infty$. 
Here we work in polar
coordinates for hyperbolic space,
$(\bH^n,b) = ( (0,\infty) \times S^{n-1}, dr^2 + \sinh^2 r \, \sigma)$, 
the subscript $r$ denotes the radial component and $\mu$ denotes 
components in a coordinate system on the sphere. It follows that
\[
\partial_r X_\mu - 2 \coth r X_\mu = \bar{f},
\]
where $\bar{f} = O(e^{-(\tau + \gamma - 1)r})$,
and hence
$X_\mu = \sinh^2 r \int_r ^\infty \tfrac{\bar{f}}{\sinh^2 s} \, ds
= O(e^{-(\tau + \gamma - 1) r})$. Thus $|X|_b = O(e^{-(\tau + \gamma) r})$, and 
hence $X \in C^{2,\beta}_{\tau + \gamma}$ by standard elliptic regularity 
applied to \eqref{eqVectLaplacian}. We proceed by induction and deduce 
that $(V,X) = O(e^{-Nr})$ for any $N>0$.

To conclude the proof, note that, as a consequence of \eqref{eqKerAdjoint},
$(V,X)\in C^2$ satisfies a differential inequality
\[
|\Delta (V,X)| \leq C \left(|(V,X)| + |\nabla (V,X)| \right),
\]
where $\Delta = \nabla^* \nabla$ is the rough Laplacian. Since $(V,X)$
vanishes to infinite order at infinity, a standard unique continuation
argument, see Appendix \ref{secUC}, implies that $(V,X)$ vanishes
identically. 
\end{proof}

We use the subscript $c$ on the notation for a function space to denote the subspace of sections with compact support.

\begin{lemma} \label{lemSur}
Let $(M,g,\pi)$ be an asymptotically hyperbolic initial data set of type
$(k,\alpha,\tau)$, where $k\geq 2$, $0<\alpha<1$ and $\frac{n}{2}< \tau < n$. Then for
any $f\in C^{k-2,\alpha}_\tau$ there exist $(v,Z) \in C^{k,\alpha}_\tau$ and
symmetric 2-tensors $(h,w) \in C^{k+1,\alpha}_c$ so that
\begin{equation} \label{eqSur}
DT|_{(1,0)}(v,Z) + D\Phi|_{(g,\pi)} (h,w) = (f,\tfrac{1}{2} h^l_j J_l).
\end{equation}
If in addition $f \in C^{k-2,\alpha}_{n+\tau_0}$ for some $\tau_0>0$ then 
$(v,Z)\in C^{k,\alpha}_n$.
\end{lemma}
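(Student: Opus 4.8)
The plan is to adapt the asymptotically Euclidean argument of \cite{SpacetimePMT}. Writing $A(h,w) = D\Phi|_{(g,\pi)}(h,w) - (0,\tfrac12 h^l_j J_l)$ as in Lemma~\ref{lemA}, the identity \eqref{eqSur} is equivalent to $DT|_{(1,0)}(v,Z) + A(h,w) = (f,0)$, so it suffices to prove that the linear map
\[
\mathcal{S}(v,Z,h,w) \definedas DT|_{(1,0)}(v,Z) + A(h,w)
\]
is surjective from $C^{2,\alpha}_\tau \times C^{3,\alpha}_c$ onto $C^{0,\alpha}_\tau$; applying this to the element $(f,0)$ then yields \eqref{eqSur}, and we may take $(h,w)$ smooth. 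The last assertion about the decay of $(v,Z)$ is a separate improvement, treated at the end.

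Since $\tfrac n2 < \tau < n$ lies in $(-1,n)$, Lemma~\ref{lemDT} gives that $DT|_{(1,0)}:C^{2,\alpha}_\tau \to C^{0,\alpha}_\tau$ is Fredholm of index zero; in particular its range is closed of finite codimension, and since it is Fredholm on the whole interval $\delta\in(-1,n)$ its kernel and cokernel are there independent of $\delta$ and consist of exponentially decaying sections. As $A(h,w)$ has compact support for $(h,w)\in C^{3,\alpha}_c$, the range of $\mathcal{S}$ contains the closed finite-codimensional subspace $\operatorname{range}(DT|_{(1,0)})$ and is therefore itself closed. By Hahn--Banach it remains to show that any $\ell\in(C^{0,\alpha}_\tau)^*$ annihilating the range of $\mathcal{S}$ vanishes.

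This is the heart of the matter, and the step where using $A$ rather than $D\Phi|_{(g,\pi)}$ in Lemma~\ref{lemA} pays off. Since $\ell$ annihilates $\operatorname{range}(DT|_{(1,0)})$, the Fredholm alternative for geometric elliptic operators on asymptotically hyperbolic manifolds (Appendix~\ref{appFredholm}), combined with interior elliptic regularity, identifies $\ell$ with the functional $F\mapsto\int_M\langle F,(V,X)\rangle\,d\mu^g$ for some $(V,X)\in\ker DT|_{(1,0)}^*$ of class $C^{2,\alpha}_{loc}$ and decaying exponentially; hence $(V,X)\in W^{0,p^*}_0 = L^{p^*}$ for $p^*$ large. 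Since $\ell$ also annihilates $A(h,w)$ for all $(h,w)\in C^\infty_c\subset C^{3,\alpha}_c$ and $(V,X)$ is $C^2$, integration by parts yields $A^*(V,X)=0$ pointwise. But by Lemma~\ref{lemA} the map $A:W^{2,p}_0\times W^{1,p}_0\to W^{0,p}_0$ is surjective for every $p\in(1,\infty)$ (the conditions $-1<\tfrac{n-1}{p}<n$ holding automatically), so $A^*$ is injective on $W^{0,p^*}_0$; therefore $(V,X)=0$ and $\ell=0$. This proves $\mathcal{S}$ surjective, and hence \eqref{eqSur} with $(v,Z)\in C^{2,\alpha}_\tau$ and $(h,w)\in C^\infty_c$.

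For the last assertion, assume in addition $f\in C^{0,\alpha}_{n+\tau_0}$. With $(h,w)$ compactly supported as just obtained, $DT|_{(1,0)}(v,Z) = (f,0)-A(h,w)\in C^{0,\alpha}_{n+\tau_0}$. Outside a compact set $DT|_{(1,0)}$ differs from the corresponding model operator on $\bH^n$, whose only indicial roots are $-1$ and $n$ (see Proposition~\ref{propModelOperators}), by terms whose coefficients lie in $C^{1,\alpha}_\tau$ (arising from $g-b$ and $\pi$); applied to $(v,Z)\in C^{2,\alpha}_\tau$ these contribute an error in $C^{0,\alpha}_{2\tau}$, with $2\tau>n$. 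Hence the model operator applied to $(v,Z)$ lies in $C^{0,\alpha}_{n+\varepsilon}$ near infinity, with $\varepsilon\definedas\min(\tau_0,2\tau-n)>0$. Since $(v,Z)\in C^{2,\alpha}_\tau$ with $\tau>0$ rules out the growing modes of the model (those attached to the indicial root $-1$), the improved elliptic regularity of \cite[Proposition~6.5]{LeeFredholm}, in the form established in Appendix~\ref{appFredholm}, shows that $(v,Z)$ is the sum of an $e^{-nr}$-mode solution of the model and a remainder in $C^{2,\alpha}_{n+\varepsilon}$; in particular $(v,Z)\in C^{2,\alpha}_n$. I expect this final endpoint improvement to be the main technical obstacle: $n$ is itself an indicial root, so Fredholm theory is unavailable there, and it is precisely the strict inequality $\tau_0>0$ that prevents a resonance with the forcing and pins the leading asymptotics at the rate $e^{-nr}$. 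The remaining ingredients are routine assemblies of Lemmas~\ref{lemDT} and~\ref{lemA} with the Fredholm and regularity results of Appendix~\ref{appFredholm}.
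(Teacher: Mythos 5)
Your reduction of \eqref{eqSur} to $DT|_{(1,0)}(v,Z)+A(h,w)=(f,0)$ is exactly right, and your overall architecture (closed range plus trivial annihilator) reaches the correct conclusion, but you take a genuinely different route from the paper, and the key step of that route is not justified as written. The paper works in $W^{2,p}_\gamma$ for some $p>n$ and $-1<\gamma+\tfrac{n-1}{p}<\tau$: there the surjectivity of $A$ from Lemma~\ref{lemA} directly yields finitely many $(h_k,w_k)$ whose images span a complement of $DT|_{(1,0)}(W^{2,p}_\gamma)$, these are replaced by compactly supported tensors via the density lemma \cite[Lemma~3.9]{LeeFredholm} and continuity of $A$, the equation is solved in $W^{2,p}_\gamma$, and $(v,Z)\in C^{2,\alpha}_\tau$ is then recovered from Sobolev embedding and improved elliptic regularity applied to $(\Delta v-nv,\Delta_L Z)\in C^{0,\alpha}_\tau$ (using $\gamma>0$ and \eqref{eqLinearT}). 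You instead dualize directly on the H\"older scale, and the sentence identifying an arbitrary $\ell\in(C^{0,\alpha}_\tau)^*$ annihilating $\operatorname{range}(DT|_{(1,0)})$ with integration against some $(V,X)\in\ker DT|_{(1,0)}^*$ is the gap: the dual of a weighted H\"older space is not a space of functions, and Theorem~\ref{thmFredholm}, which you cite for this ``Fredholm alternative'', applies to formally self-adjoint geometric operators, whereas $DT|_{(1,0)}$ is neither --- it is Fredholm only as a compact perturbation of such an operator (Lemma~\ref{lemDT}). To show that the finite-dimensional annihilator of the range is spanned by the integral functionals attached to the $L^2$-kernel of the formal adjoint one essentially has to pass to the Sobolev scale, where $(W^{0,p}_\gamma)^*=W^{0,p^*}_{-\gamma}$ under the $L^2$ pairing --- that is, to run the paper's argument. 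The rest of your duality step (deducing $A^*(V,X)=0$ from annihilation of $A(C^\infty_c)$ and then $(V,X)=0$ from the surjectivity of $A$, i.e.\ from $\ker A^*=\{0\}$ in Lemma~\ref{lemA}) is fine once the representation is in hand.

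On the endpoint statement $(v,Z)\in C^{2,\alpha}_n$: your computation that $(\Delta v-nv,\Delta_L Z)\in C^{0,\alpha}_{n+\varepsilon}$ with $\varepsilon=\min(\tau_0,2\tau-n)>0$ matches the paper, and you correctly observe that $\delta=n$ sits at an indicial root where Fredholm theory fails. However, the tool that closes this step is not \cite[Proposition~6.5]{LeeFredholm} ``in the form established in Appendix~\ref{appFredholm}'' --- that result stops strictly inside the indicial interval --- but the radial ODE analysis of Lemma~\ref{lemODE} and Proposition~\ref{propCritical} in Appendix~\ref{secAbove}, which implements precisely the mechanism you sketch (variation of parameters pins the solution at the fast rate when the source decays strictly faster than $e^{-nr}$). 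Note also that for $\Delta_L$ the radial and tangential components have different indicial roots ($\{-1,n\}$ and $\{-2,n-1\}$ respectively); the claimed decay $|Z|_b=O(e^{-nr})$ is correct only after accounting for the metric factor on the angular components, as in the proof of Proposition~\ref{propCritical}.
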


\begin{proof} 
For some $p > n$ we choose $\gamma > 0$ so that
$-1 < \gamma + \tfrac{n-1}{p} < \tau$. In this case
$C^{l,\alpha}_{\tau} \hookrightarrow W^{l,p}_{\gamma}$ for $l=0,1,\ldots, k$, see \cite[Lemma~3.6 (c)]{LeeFredholm}. Further, by
Lemma~\ref{lemDT} the operator $DT|_{(1,0)}:W^{2,p}_{\gamma} \to W^{0,p}_{\gamma}$
is Fredholm with index zero. Since the linear map 
$A:W^{2,p}_{\gamma} \times W^{1,p}_{\gamma} \rightarrow W^{0,p}_{\gamma}$ defined 
in Lemma~\ref{lemA} is surjective, we can find symmetric 2-tensors
$(h_i,w_i)\in W^{2,p}_{\gamma} \times W^{1,p}_{\gamma}$, $i=1,\dots, N$, such 
that their images $A(h_i,w_i)$ span a subspace that complements
$DT|_{(1,0)} (W^{2,p}_{\gamma})$ in $W^{0,p}_{\gamma}$. Note that 
by the density of compactly supported sections, 
\cite[Lemma~3.9]{LeeFredholm}, together with the continuity of $A$ we 
may assume that $(h_i,w_i) \in C^{k+1,\alpha}_c$. Consequently, since
$f\in W^{0,p}_\gamma$ we can find $(v,Z) \in W^{2,p}_{\gamma}$ and  
$(h,w) \in C^{k+1,\alpha}_c$ such that \eqref{eqSur} holds. 
By Sobolev embedding $(v,Z) \in C^{1,\alpha}_{\gamma}$. Since $\gamma > 0$ 
and $f \in C^{0,\alpha}_\tau$ it follows from \eqref{eqLinearT} that 
$(\Delta v - n v, \Delta_L Z) \in C^{0,\alpha}_{\tau}$. From 
\cite[Proposition~6.5]{LeeFredholm} we conclude that 
$(v,Z) \in C^{2,\alpha}_{\tau}$ and $(v,Z) \in C^{k,\alpha}_{\tau}$ follows by a standard bootstrap argument. 

To prove the second claim note that outside a sufficiently large compact 
set $(v,Z) \in C^{k,\alpha}_\tau$ satisfies 
$(\Delta v - n v, \Delta_L Z) \in C^{k-2,\alpha}_{n+\varepsilon}$ 
for some $\varepsilon > 0$. This is an immediate 
consequence of \eqref{eqLinearT} and the fact that $\tau > \tfrac{n}{2}$. 
The claim follows from Proposition~\ref{propCritical}.
\end{proof}

\begin{proof}[Proof of Theorem~\ref{thStrictDEC}] 
With the above lemmas at hand, the proof differs very little from that 
of \cite[Theorem~22]{SpacetimePMT}. We choose a positive $C^{k+1,\alpha}$ 
function $f$ such that 
\[
f = e^{-(n+\min\{1,\tau_0\})r}
\] 
near infinity, and let $(v,Z)\in C^{k,\alpha}_n$ and $(h,w)\in C^{k+1,\alpha}_c$
be a solution of the system 
\[
DT|_{(1,0)}(v,Z) + D\Phi|_{(g,\pi)} (h,w) = (-f,\tfrac{1}{2} h^l_j J_l),
\]
which exists by Lemma~\ref{lemSur}. We will show that for a sufficiently
small $t>0$,  
\[
\bar{g}=(1+t v)^\kappa (g + t h)
\qquad \text{and} \qquad
\bar{\pi} = (1 + tv)^{\kappa/2} (\pi + t \rlie_Z g + t w)
\]
is an initial data set whose existence is asserted in the theorem. Note
that $\|g-\gbar\|_{C^{k,\alpha}_\tau} \leq \varepsilon$,
$\|\pi-\pibar\|_{C^{k-1,\alpha}_\tau} \leq \varepsilon$ provided that $t$ is
sufficiently small.

We will verify that $\mubar > (1+\gamma) |\Jbar|_{\gbar}$ for some
$\gamma>0$ depending on $t$. Set $u = 1 + t v$ and define
\[
\Phi_1 (1 + t v, t Z, t h, t w)
= (-2 u^\kappa \mubar, u^{\kappa/2} \Jbar).
\]
Linearizing we have
\begin{equation} \label{eqLinear}
\begin{split}
\Phi_1 (1 + t v, t Z, t h, t w)
&=
\Phi_1 (1,0,0,0) + t D\Phi_1 |_{(1,0,0,0)} (v, Z, h, w) + \cR \\
&=
(-2 \mu, J) + t DT|_{(1,0)} (v,Z) + t D\Phi|_{(g,\pi)} (h,w) + \cR \\
&=
(-2 \mu, J) + t (-f, \tfrac{1}{2} h^k_i J_k) + \cR,
\end{split}
\end{equation}
where the remainder term $\cR = \cR(t,v,Z,h,w)$ can be written as
\[ \begin{split}
\cR(t,v,Z,h,w)
&=
\Phi_1 (1 + t v, t Z, t h, t w) - \Phi_1 (1,0,0,0)
- t D\Phi_1 |_{(1,0,0,0)} (v, Z, h, w) \\
&=
t \int_0^1 \left[
D\Phi_1 |_{(1 + \theta t v , \theta t Z, \theta t h, \theta t w)} - D\Phi_1 |_{(1, 0, 0, 0)}
\right] (v, Z, h, w) \, d\theta, 
\end{split} \]
by the mean value theorem.
 
We first prove that
\begin{equation} \label{cR-estimate}
|\cR| \leq C t^2 e^{-2n r} = O(t^2 f),
\end{equation}
where the constant $C>0$ does not depend on $t$ and is uniform for all
points. For this it suffices to estimate $\cR$ outside the support of
$(h,w)$ where it takes the form
\[
\cR(t,v,Z) =
t \int_0^1 \left[
DT |_{(1 + \theta t v , \theta t Z)} - DT |_{(1, 0)}
\right](v, Z) \, d\theta. 
\]
Using \eqref{eqRescConstraints} we compute 
\[ \begin{split}
DT|_{(u,Y)}(v,Z)
&=
\Big( \tfrac{4(n-1)}{n-2} (-u^{-2} v \Delta^g u + u^{-1} \Delta^g v)
-n(n-1)\kappa u^{\kappa-1}v \\
&\qquad
+ \kappa u^{\tfrac{\kappa}{2}-1} v \tr^g \pi + 2 \langle \pi,\rlie_Z g \rangle
+ 2 \langle \rlie_Z g, \rlie_Y g\rangle, \\
&\qquad
(\Delta_L Z)_j
+ \tfrac{2(n-1)}{n-2} \nabla_k (vu^{-1}) (\pi + \rlie_Y g)^k_j \\ 
&\qquad
+ \tfrac{2(n-1)}{n-2} u^{-1} \nabla_k u (\rlie_Z g)^k_j
-\tfrac{2}{n-2} \nabla_j (vu^{-1}) \tr^g \pi \Big).
\end{split} \]
Then it is not complicated to check that
\[
\left|DT |_{(1 + \theta t v , \theta t Z)} (v,Z) - DT |_{(1, 0)}(v, Z) \right|
\leq \theta t \cQ(v,Z) 
\]
where $\cQ$ is a quadratic function of $v$, its first and second order 
covariant derivatives, and $\rlie_Z g$, which is uniformly bounded in 
$\theta \in [0,1]$. Hence \eqref{cR-estimate} holds.

By \eqref{eqLinear} we have
\[
u^\kappa \mubar = \mu + \frac{t}{2} f + O(t^2 f) 
\qquad \text{and} \qquad 
u^{\kappa/2} \Jbar_i = J_i + \frac{t}{2} h^k_i J_k + O(t^2f).
\]
In particular, for sufficiently small $t>0$, we have
\begin{equation} \label{eqBoundMu}
u^\kappa \mubar > \mu + \frac{t}{3}f. 
\end{equation}
Recall that $h$ is compactly supported, hence we may write
\[
\gbar^{ij} = u^{-\kappa} (g^{ij} - t g^{ik} h^j_k + O(t^2 f)).
\]
Since $f$ is positive, we obtain
\[ \begin{split}
(u^{\kappa} |\Jbar|_{\gbar})^2
&=
u^{2\kappa} \gbar^{ij} \Jbar_i \Jbar_j \\
&=
(g^{ij} - t g^{ik} h^j_k + O(t^2 f)) (J_i + \frac{t}{2} h^l_i J_l + O(t^2f))
(J_j + \frac{t}{2} h^m_j J_m + O(t^2f)) \\
&=
|J|_g^2 + O(t^2 |J|_g f + t^3 f^2) \\
&=
\left(|J|_g + \frac{tf}{4}\right)^2 - \frac{tf}{2} |J|_g
- \frac{t^2 f^2}{16} + O\left(t \left(\frac{tf}{2} |J|_g 
+ \frac{t^2 f^2}{16}\right)\right) \\
&< \left(|J|_g + \frac{tf}{4}\right)^2
\end{split} \]
for $t>0$ small enough, so we find that
\begin{equation} \label{eqBoundJ}
u^\kappa |\Jbar|_{\gbar} < |J|_g + \frac{t}{4}f
\end{equation}
for such $t$.

Fix $t>0$ such that \eqref{eqBoundMu} and \eqref{eqBoundJ} hold. Note that
our choice of $f$ implies that $\sup_M \left( |J|_g / f\right) < \infty$.
Therefore for any $x \in M$ such that $|\Jbar|_{\gbar}(x) \neq 0$ we have 
\[
\frac{\mubar}{|\Jbar|_{\gbar}}
=
\frac{u^\kappa \mubar}{u^\kappa |\Jbar|_{\gbar}}
>
\frac{\mu + tf/3}{|J|_g + tf/4} \geq \frac{|J|_g + tf/3}{|J|_g + tf/4}
=
1 + \frac{t}{12 (|J|_g/f) + 3 t } \geq 1 + \gamma,
\]
for
\[
\gamma \definedas \frac{t}{12 \sup_M \left( |J|_g/f \right) + 3t}.
\]
At points where $|\Jbar|_{\gbar}(x) = 0$ we have $\mubar > 0$ by
\eqref{eqBoundMu}. Consequently, we have
$\mubar > (1+\gamma)|\Jbar|_{\gbar}$ everywhere on $M$ as desired. 

Note also that $(\bar{\mu},\bar{J})\in C^{k-2,\alpha}_{n+\tau'_0}$ for
$\tau_0' = \min\{1,\tau_0\}$ by \eqref{eqLinear}, the asymptotics of $u$,
and the properties of $\mathcal{R}$. 
In particular
$\|(\mu,J)-(\bar{\mu},\bar{J}) \|_ {C^{0}_{n+\tau'_0}}$ can be made arbitrarily
small for a sufficiently small $t$. Thus Proposition~\ref{propContinuity}
guarantees that 
$\left| \mathcal{M}_{(g,\pi)}(V) - \mathcal{M}_{(\bar{g},\bar{\pi})}(V) \right|
< \varepsilon$ holds.  
\end{proof}

\section{Perturbation to conformally hyperbolic asymptotics}
\label{secConfHyp}

In this section we prove the following result.

\begin{theorem} \label{thConfHyp}
Let $(M,g,\pi)$ be an asymptotically hyperbolic initial data set of type
$(k,\alpha, \tau,\tau_0)$ for $0<\alpha<1$, $\tfrac{n}{2} < \tau < n$ and
$\tau_0>0$. Assume that the dominant energy condition $\mu \geq |J|_g$
holds. Then for every $\tau' < \tau$ and $\varepsilon>0$ there exists
an asymptotically hyperbolic initial data set $(\bar{g},\bar{\pi})$, with the energy and momentum density 
denoted by $(\mubar,\Jbar)$,  which
has conformally hyperbolic asymptotics with respect to the same chart,
and is such that 
\[
\|g - \bar{g}\|_{C_{\tau'}^{k,\alpha}}<\varepsilon,
\qquad
\|\pi - \bar{\pi}\|_{C_{\tau'}^{k-1,\alpha}} < \varepsilon,
\]
the strict dominant energy condition 
\[
\bar{\mu} > |\bar{J}|_{\bar{g}}
\]
holds, and
\[
|\mathcal{M}_{(g,\pi)} (V) - \mathcal{M}_{(\bar{g},\bar{\pi})}(V)|
< \varepsilon
\]
for any $V \in \{V_{(0)}, V_{(1)}, \ldots , V_{(n)}\}$.
\end{theorem}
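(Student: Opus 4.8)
The plan is to follow the argument of \cite[proof of Theorem~18]{SpacetimePMT}: first reduce to the strictly dominant case, then perform a single cutoff-and-solve step. By Theorem~\ref{thStrictDEC}, applied with $\varepsilon$ replaced by $\varepsilon/2$, together with the triangle inequality for the $C^{2,\alpha}_{\tau'}$-norms (using $C^{2,\alpha}_{\tau}\hookrightarrow C^{2,\alpha}_{\tau'}$) and for the mass functional, it suffices to prove the theorem under the additional assumption that $\mu>(1+\gamma)|J|_g$ for some constant $\gamma>0$; in particular $\mu>0$ everywhere on $M$. We also fix $\tau'$ with $\tfrac{n}{2}<\tau'<\tau$, which is harmless since $C^{2,\alpha}_{\tau'}\hookrightarrow C^{2,\alpha}_{\tau''}$ for $\tau''<\tau'$ makes the conclusion monotone in $\tau'$, while $\tau'>\tfrac n2$ is needed for the mass to be defined (Proposition~\ref{propWellDefMass}).

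For $\sigma$ large let $\chi_\sigma$ be a cutoff on $\bH^n$ with $\chi_\sigma\equiv 1$ on $B_\sigma$, $\chi_\sigma\equiv 0$ off $B_{2\sigma}$ and $|\nabla^j\chi_\sigma|\leq C$ for $j\leq 3$, uniformly in $\sigma$, and set $g_\sigma=b+\chi_\sigma(\Psi_*g-b)$, $\pi_\sigma=\chi_\sigma\Psi_*\pi$ in the chart at infinity. Then $(g_\sigma,\pi_\sigma)$ coincides with $(g,\pi)$ over $B_\sigma$, equals $(b,0)$ outside $B_{2\sigma}$ (hence is trivially of conformally hyperbolic form there), and $\|g_\sigma-g\|_{C^{2,\alpha}_{\tau'}}+\|\pi_\sigma-\pi\|_{C^{1,\alpha}_{\tau'}}\to 0$ as $\sigma\to\infty$; the passage from $\tau$ to $\tau'$ is unavoidable, since the cutoff error, although pointwise $O(e^{-\tau r})$, is small only in weighted norms with weight $<\tau$. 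As $g_\sigma-b$ and $\pi_\sigma$ have compact support, $(g_\sigma,\pi_\sigma)$ is an asymptotically hyperbolic initial data set of type $(\alpha,\tau'')$ for every $\tau''>0$, so the constructions of Section~\ref{secStrictDEC}, in particular Lemmas~\ref{lemDT}, \ref{lemA} and~\ref{lemSur}, apply with $(g,\pi)$ replaced by $(g_\sigma,\pi_\sigma)$.

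The heart of the proof is to solve the deformed constraints over $(g_\sigma,\pi_\sigma)$. Mimicking the proof of Theorem~\ref{thStrictDEC}, set
\[
\Phi_\sigma(u,Z,h,w)=\bigl(-2u^{\kappa}\bar{\mu},\,u^{\kappa/2}\bar{J}\bigr),\qquad \bar{g}=u^{\kappa}(g_\sigma+h),\quad \bar{\pi}=u^{\kappa/2}(\pi_\sigma+\rlie_Z g_\sigma+w),
\]
where $\bar\mu,\bar J$ are the constraint quantities of $(\bar g,\bar\pi)$ and $\kappa=\tfrac{4}{n-2}$. I would solve $\Phi_\sigma(1+v,Z,h,w)=\Phi(g,\pi)$ for $(v,Z)\in C^{2,\alpha}_{\tau}$ — lying in $C^{2,\alpha}_{n}$ near infinity by the critical-case analysis — and symmetric $2$-tensors $(h,w)\in C^{3,\alpha}_c$ supported in a fixed compact subset of $B_\sigma$. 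Since $\Phi_\sigma(1,0,0,0)=\Phi(g_\sigma,\pi_\sigma)$, since the linearization $D\Phi_\sigma|_{(1,0,0,0)}(v,Z,h,w)=DT|_{(1,0)}(v,Z)+D\Phi|_{(g_\sigma,\pi_\sigma)}(h,w)$ admits a bounded right inverse by Lemmas~\ref{lemDT}, \ref{lemA} and~\ref{lemSur}, and since the inhomogeneity $\Phi(g,\pi)-\Phi(g_\sigma,\pi_\sigma)$ is supported in $\{r\geq\sigma\}$ and tends to $0$ in $C^{0,\alpha}_{\tau'}$, an implicit function theorem argument — for which the main task is to check that the right-inverse bounds are uniform as $(g_\sigma,\pi_\sigma)\to(g,\pi)$ — yields a solution with $\|(v,Z)\|_{C^{2,\alpha}_{\tau'}}+\|(h,w)\|_{C^{3,\alpha}}\to 0$. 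By construction $u^{\kappa}\bar\mu=\mu$ and $u^{\kappa/2}\bar J=J$ with $u=1+v>0$, hence $\bar\mu-|\bar J|_{\bar g}=u^{-\kappa}\bigl(\mu-|J|_{g_\sigma+h}\bigr)\geq u^{-\kappa}\bigl(\mu-(1+o(1))|J|_g\bigr)>0$ for $\sigma$ large, using $\mu>(1+\gamma)|J|_g$ and $\mu>0$; thus the strict dominant energy condition holds. Outside $B_{2\sigma}$ we have $(g_\sigma,\pi_\sigma)=(b,0)$ and $(h,w)=0$, so $\bar g=(1+v)^{\kappa}b$ and $\bar\pi=(1+v)^{\kappa/2}\rlie_Z b$, which is the form in Definition~\ref{defConfHypID}.

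It remains to verify the expansion~\eqref{eqConfHyp} and the remaining estimates. Outside $B_{2\sigma}$ the equation $\Phi((1+v)^{\kappa}b,(1+v)^{\kappa/2}\rlie_Z b)=(-2\mu,J)$ with $(\mu,J)\in C^{0,\alpha}_{n+\tau_0}$ becomes, via \eqref{eqRescConstraints} with $g=b$, $\pi=0$, $\scal^b=-n(n-1)$, an elliptic system for $(v,Z)$ with linear part $(\Delta^b v-nv,\Delta_L Z)$ and right-hand side in $C^{0,\alpha}_{n+\tau_0'}$ for some $\tau_0'>0$; since the indicial roots of $\Delta^b-n$ are $-1$ and $n$, and the radial and tangential components of $Z$ are governed by ODEs of the type appearing in the proof of Lemma~\ref{lemA} (cf.\ \eqref{eqVarPar}), Proposition~\ref{propCritical} yields $v=v_0e^{-nr}+v_1$, $Z_r=(Z_0)_re^{-nr}+(Z_1)_r$, $Z_\varphi=(Z_0)_\varphi e^{-(n-1)r}+(Z_1)_\varphi$ with $(v_0,Z_0)$ independent of $r$ and $(v_1,Z_1)\in C^{2,\alpha}_{n+1}$, i.e.\ \eqref{eqConfHyp}. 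The estimates $\|g-\bar g\|_{C^{2,\alpha}_{\tau'}}<\varepsilon$ and $\|\pi-\bar\pi\|_{C^{2,\alpha}_{\tau'}}<\varepsilon$ follow from $(g_\sigma,\pi_\sigma)\to(g,\pi)$ and $(v,Z,h,w)\to 0$. Finally, $\Phi(\bar g,\bar\pi)=\bigl((1+v)^{-\kappa}(-2\mu),(1+v)^{-\kappa/2}J\bigr)$ agrees with $\Phi(g,\pi)$ up to the factor $(1+v)^{-\kappa}=1+O(e^{-nr})$, so running the argument of Proposition~\ref{propContinuity} the difference $\mathcal{M}_{(g,\pi)}(V)-\mathcal{M}_{(\bar g,\bar\pi)}(V)$ reduces to an interior integral over $\bH^n\setminus B_R$ of $\langle\Phi(\bar g,\bar\pi)-\Phi(g,\pi),(V,-dV)\rangle$, which is $O(e^{-(n+\tau_0)R})$, a quadratic remainder term of size $O(\|\bar g-g\|_{C^{2}_{\tau'}}+\|\bar\pi-\pi\|_{C^{1}_{\tau'}})$, and an inner boundary integral over $S_R$ bounded by $C\|\bar g-g\|_{C^{2}_{\tau'}}e^{(n-\tau')R}$; choosing first $R$ and then $\sigma$ large makes this less than $\varepsilon/2$, which with the reduction of the first paragraph gives $|\mathcal{M}_{(g,\pi)}(V)-\mathcal{M}_{(\bar g,\bar\pi)}(V)|<\varepsilon$. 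The main obstacle is the implicit function theorem step — obtaining right-inverse bounds uniform in $\sigma$, so that the solution converges to the trivial one — together with extracting the sharp critical-weight asymptotics~\eqref{eqConfHyp}, which is where Proposition~\ref{propCritical} and the indicial structure of $\Delta^b-n$ and $\Delta_L$ are genuinely needed.
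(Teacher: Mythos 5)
Your overall architecture (reduce to strict DEC via Theorem~\ref{thStrictDEC}, cut off to exactly hyperbolic data near infinity, solve back for the constraints by an inverse function theorem with compactly supported correction tensors $(h,w)$ absorbing the cokernel, then bootstrap the asymptotics) is the same as the paper's, which packages these steps into Lemma~\ref{lemConfHyp} and Proposition~\ref{propConfHyp}. But there is one genuine gap: you prescribe the new energy--momentum densities to be (up to the conformal factor $u^{-\kappa}$) the \emph{original} $(\mu,J)\in C^{0,\alpha}_{n+\tau_0}$. With that choice the semilinear system \eqref{eqConfIteration} outside $B_{2\sigma}$ has inhomogeneity only in $C^{0,\alpha}_{n+\tau_0}$, and Proposition~\ref{propCritical} (second bullet, applied with $\varepsilon=\tau_0$) then yields a remainder $(v_1,Z_1)\in C^{2,\alpha}_{n+\tau_0}$ --- \emph{not} $C^{2,\alpha}_{n+1}$ as you assert. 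No amount of iterating the bootstrap improves this, because the term $u\mu$ on the right-hand side is stuck at decay rate $n+\tau_0$. Since Definition~\ref{defConfHypID} demands $(v_1,Y_1)\in C^{2,\alpha}_{n+1}$, your $(\bar g,\bar\pi)$ fails to have conformally hyperbolic asymptotics whenever $\tau_0<1$, which the hypotheses allow.

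The missing idea is the paper's preliminary modification of the target densities: one replaces $(\mu,J)$ by $(\xi_\lambda\mu,\xi_\lambda J)$, where $\xi_\lambda=1$ on a large compact set and $\xi_\lambda=e^{-r}$ near infinity, so that the prescribed densities lie in $C^{0,\alpha}_{n+1+\tau_0}$ and the iteration of Proposition~\ref{propCritical} does reach $C^{2,\alpha}_{n+1}$ (this is exactly the role of the hypothesis $\tau_0\geq 1$ in Proposition~\ref{propConfHyp}). One must then re-check the strict dominant energy condition for the modified densities --- which is immediate since $\mu$ and $J$ are multiplied by the same positive factor, giving $\xi_\lambda\mu>(1+\gamma/2)|\xi_\lambda J|_{g_\lambda}$ for $\lambda$ large --- and check that $\|(\mu,J)-(\xi_\lambda\mu,\xi_\lambda J)\|_{C^{0}_{n+\tau_0'}}\to 0$ for $\tau_0'<\tau_0$ so that Proposition~\ref{propContinuity} still controls the mass. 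With this amendment the rest of your argument (the uniform right-inverse bounds, the DEC computation $\bar\mu-|\bar J|_{\bar g}=u^{-\kappa}(\mu-|J|_{g_\sigma+h})$, and the mass estimate) goes through essentially as in the paper.
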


In \cite[Appendix B]{DGSsmallmass} a similar result was proven in the 
simpler case when $\pi=0$. The proof of Theorem~\ref{thConfHyp} is very 
similar to \cite[Proof of Theorem~18]{SpacetimePMT}. Its main ingredients 
are Theorem~\ref{thStrictDEC} and the following lemma. 

\begin{lemma} \label{lemConfHyp}
Let $(g,\pi)$ be an asymptotically hyperbolic initial data set of type
$(k,\alpha,\tau)$ for $k\geq 2$, $0<\alpha<1$ and $\tfrac{n}{2} < \tau < n$ and suppose
that $\tfrac{n}{2} < \tau'<\tau$. Then there are positive constants $C_0$ and $\delta_0$
such that for any $(\bar{\mu},\bar{J}) \in C^{k-2,\alpha}_\tau$ with
$\|(\mu-\bar{\mu}, J-\bar{J})\|_{C^{k-2,\alpha}_\tau} \leq \delta \leq \delta_0$,
there exists an initial data set $(\bar{g},\bar{\pi})$ of type $(k,\alpha,\tau)$
with the following properties:
\begin{itemize}
\item
The energy and momentum densities of $(\bar{g},\bar{\pi})$ are
$\bar{\mu}$ and $\bar{J}$.
\item
Outside of a compact set $(\bar{g},\bar{\pi})$ is of the form
\[
\bar{g} = u^\kappa b, \qquad \bar{\pi} = u^{\kappa/2} \rlie_Y b
\]
for $(u-1,Y) \in C^{k,\alpha}_\tau$.
\item
The initial data set $(\bar{g},\bar{\pi})$ is close to $(g,\pi)$ in the
sense that
\[
\|g-\bar{g}\|_{C^{k,\alpha}_{\tau'}}\leq C_0 \delta,
\qquad \|\pi-\bar{\pi}\|_{C^{k-1,\alpha}_{\tau'}}\leq C_0 \delta.
\]
\end{itemize}
\end{lemma}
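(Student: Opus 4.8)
The plan is in two stages. First I would glue $(g,\pi)$ near infinity to data that is \emph{exactly} of conformally hyperbolic form, at a radius that tends to infinity as $\delta\to0$ so that the price is $O(\delta)$ measured in the weaker weight $\tau'$; then I would solve the conformally rescaled constraint equations to install the prescribed densities $(\bar\mu,\bar J)$, again with an $O(\delta)$ perturbation, and finally show that the resulting conformal data decays at the full rate $\tau$.

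\textbf{Stage 1 (gluing).} For a radius $R$ to be fixed below, let $\chi=\chi_R$ be a cutoff equal to $0$ on $B_R$ and $1$ on $\bH^n\setminus B_{R+1}$, with bounded derivatives, and set, in the chart at infinity, $g^\sharp=g+\chi(b-g)$, $\pi^\sharp=(1-\chi)\pi$. Then $(g^\sharp,\pi^\sharp)$ is asymptotically hyperbolic of type $(\alpha,\tau)$, with $\|g^\sharp-b\|_{C^{2,\alpha}_\tau}+\|\pi^\sharp\|_{C^{1,\alpha}_\tau}$ bounded independently of $R$; it equals $(g,\pi)$ on $B_R$ and $(b,0)$ outside $B_{R+1}$; and, since the modification is supported in $\bH^n\setminus B_R$ where $g-b,\pi$ are pointwise $O(e^{-\tau r})$ and $\Phi$ is locally Lipschitz from $C^{2,\alpha}_\tau\times C^{1,\alpha}_\tau$ to $C^{0,\alpha}_\tau$, its densities $(\mu^\sharp,J^\sharp)\in C^{0,\alpha}_\tau$ satisfy
\[
\|g-g^\sharp\|_{C^{2,\alpha}_{\tau'}}+\|\pi-\pi^\sharp\|_{C^{1,\alpha}_{\tau'}}+\|(\mu,J)-(\mu^\sharp,J^\sharp)\|_{C^{0,\alpha}_{\tau'}}\le C_1 e^{-(\tau-\tau')R},
\]
with $C_1$ depending only on $(g,\pi),\tau,\tau'$. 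Choosing $R=R(\delta)$ so that $C_1 e^{-(\tau-\tau')R}\le\delta$ — possible once $\delta_0$ is small enough — makes $\|g-g^\sharp\|_{C^{2,\alpha}_{\tau'}}+\|\pi-\pi^\sharp\|_{C^{1,\alpha}_{\tau'}}\le\delta$ and $\|(\mu^\sharp,J^\sharp)-(\bar\mu,\bar J)\|_{C^{0,\alpha}_{\tau'}}\le2\delta$.

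\textbf{Stage 2 (conformal solve).} Now repeat the construction \eqref{eqDeformedID}--\eqref{eqElliptic} with background $(g^\sharp,\pi^\sharp)$: for $(v,Z)$ and symmetric $2$-tensors $(h,w)$ set $\gtil=(1+v)^\kappa(g^\sharp+h)$, $\pitil=(1+v)^{\kappa/2}(\pi^\sharp+\rlie_Z g^\sharp+w)$, and $\mathcal T(v,Z,h,w)=(-2(1+v)^\kappa\mutil,\,(1+v)^{\kappa/2}\Jtil)$, so that $\mathcal T(0,0,0,0)=(-2\mu^\sharp,J^\sharp)$ and $D\mathcal T|_{(0,0,0,0)}(v,Z,h,w)=DT|_{(1,0)}(v,Z)+D\Phi|_{(g^\sharp,\pi^\sharp)}(h,w)$ as in \eqref{eqLinearT}. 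Exactly as in the proof of Lemma~\ref{lemSur} — using that $DT|_{(1,0)}$ is Fredholm of index zero (Lemma~\ref{lemDT}, with weight $\tau'\in(-1,n)$), that $D\Phi|_{(g^\sharp,\pi^\sharp)}$ is surjective (Lemma~\ref{lemA}), the density of compactly supported sections together with unique continuation for $DT|_{(1,0)}^*$ (allowing one to take the correcting tensors $(h_k,w_k)\in C^{3,\alpha}_c$ supported in a fixed compact set), and elliptic regularity — one arranges that $\Xi(v,Z,a)\definedas\mathcal T\bigl(v,Z,\sum_k a_k h_k,\sum_k a_k w_k\bigr)$ has surjective differential at the origin with bounded right inverse. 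The inverse function theorem in the $\tau'$-weighted spaces then yields $(v,Z,a)$ with $\Xi(v,Z,a)=(-2\bar\mu,\bar J)$ and $\|(v,Z)\|_{C^{2,\alpha}_{\tau'}}+|a|\le C\,\|(-2\bar\mu,\bar J)-(-2\mu^\sharp,J^\sharp)\|_{C^{0,\alpha}_{\tau'}}\le C\delta$. Putting $\bar g=(1+v)^\kappa(g^\sharp+h)$, $\bar\pi=(1+v)^{\kappa/2}(\pi^\sharp+\rlie_Z g^\sharp+w)$ with $(h,w)=\sum_k a_k(h_k,w_k)$, we get data with densities $(\bar\mu,\bar J)$, equal to $((1+v)^\kappa b,(1+v)^{\kappa/2}\rlie_Z b)$ outside the compact set $B_{R+1}\cup\supp(h,w)$, and — using that $(h,w)$ is supported in a fixed compact set — satisfying $\|g-\bar g\|_{C^{2,\alpha}_{\tau'}}+\|\pi-\bar\pi\|_{C^{1,\alpha}_{\tau'}}\le C_0\delta$ when combined with Stage~1.

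\textbf{Stage 3 and the main obstacle.} Finally I would upgrade $(u-1,Y)\definedas(v,Z)$ from $C^{2,\alpha}_{\tau'}$ to $C^{2,\alpha}_\tau$; this also gives $\bar g-b\in C^{2,\alpha}_\tau$, $\bar\pi\in C^{1,\alpha}_\tau$, so that $(\bar g,\bar\pi)$ is of type $(\alpha,\tau)$. Outside the compact set, $(v,Z)$ solves the reduced system coming from \eqref{eqRescConstraints} with background $b$, schematically $\Delta^b v-nv=c\bar\mu+[\text{quadratic in }v,Z]$ and $\Delta_L^b Z=c'\bar J+[\text{quadratic in }v,Z]$; since $(\bar\mu,\bar J)\in C^{0,\alpha}_\tau$ and $(v,Z)\in C^{2,\alpha}_{\tau'}$, the right-hand sides lie in $C^{0,\alpha}_{\min(\tau,2\tau')}$, so the mapping properties of $\Delta^b-n$ and $\Delta_L^b$ on weighted spaces (Proposition~\ref{propModelOperators}) together with \cite[Proposition~6.5]{LeeFredholm} give $(v,Z)\in C^{2,\alpha}_{\min(\tau,2\tau')}$ near infinity — hence on $M$ by interior regularity — and finitely many iterations, using $\tau<n$, reach $C^{2,\alpha}_\tau$. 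I expect the main work to be the weighted-norm bookkeeping: keeping the Stage~2 estimates sharp in the weaker weight $\tau'$ (which is exactly what forces $R(\delta)\to\infty$), and checking that the inverse-function-theorem radius and the norm of the right inverse of $D\Xi|_{(0,0,0)}$ stay uniform over $\delta\le\delta_0$. The latter follows from a routine compactness argument, the glued backgrounds $(g^\sharp,\pi^\sharp)$ being bounded in $C^{2,\alpha}_\tau\times C^{1,\alpha}_\tau$ and agreeing with $(g,\pi)$ on balls exhausting $M$ as $\delta\to0$, while the Fredholm index and the surjectivity used above are stable.
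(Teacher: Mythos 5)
Your overall strategy coincides with the paper's: cut the data off to exactly hyperbolic data outside a large ball (your $R(\delta)$ plays the role of the paper's $\lambda$), apply the inverse function theorem to the Corvino--Schoen type map $(u,Y,h,w)\mapsto\Phi\bigl(u^\kappa g_\lambda+h,\,u^{\kappa/2}(\pi_\lambda+\rlie_Y g_\lambda)+w\bigr)$ with finitely many compactly supported correcting tensors complementing the range of the conformal linearization, and then bootstrap the decay of $(u-1,Y)$ from $\tau'$ to $\tau$ using the model operators $\Delta^b-n$ and $\Delta_L$. The uniformity of the IFT constants over $\delta\le\delta_0$ is handled in the paper exactly as you indicate, by fixing the complementing tensors for $(g,\pi)$ once and using that $D\Xi_{(g_\lambda,\pi_\lambda)}|_{(1,0,0,0)}\to D\Xi_{(g,\pi)}|_{(1,0,0,0)}$ in operator norm.

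There is, however, one concrete error in Stage~2. You define $\mathcal T(v,Z,h,w)=\bigl(-2(1+v)^\kappa\widetilde\mu,\,(1+v)^{\kappa/2}\widetilde J\bigr)$, i.e.\ you carry over the conformal rescaling factors from the operator $T$ of Section~\ref{secStrictDEC}, and then solve $\mathcal T=(-2\bar\mu,\bar J)$. The data you construct therefore has energy and momentum densities $\bigl((1+v)^{-\kappa}\bar\mu,\,(1+v)^{-\kappa/2}\bar J\bigr)$, not $(\bar\mu,\bar J)$, so the first bullet of the lemma (exact equality of the densities) fails as written; the exactness is what Theorem~\ref{thConfHyp} uses when it prescribes $(\xi_\lambda\mu,\xi_\lambda J)$. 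The rescaling factors are needed in Section~\ref{secStrictDEC} to make the dominant energy condition scale correctly under the perturbation, but here they must be dropped: the map to invert is $\Xi(u,Y,h,w)=\Phi\bigl(u^\kappa g_\lambda+h,\,u^{\kappa/2}(\pi_\lambda+\rlie_Y g_\lambda)+w\bigr)$ itself, whose value is by definition $(-2\bar\mu,\bar J)$ for the new data. This changes the zeroth-order coefficients of the linearization (compare $DT_{(g,\pi)}|_{(1,0)}$ in the paper's proof with \eqref{eqLinearT}), but not its Fredholm properties, so the rest of your argument survives the correction unchanged.
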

\begin{proof} 
The proof uses the construction introduced by Corvino and Schoen in 
\cite[Proof of Theorem~1]{CorvinoSchoen}, which is similar to the one 
that was used in the proof of Theorem~\ref{thStrictDEC}. Given $(g,\pi)$ 
as in the statement of the theorem and $(u-1,Y) \in C^{k,\alpha}_\tau$, 
we define the map
\[
T_{(g,\pi)}(u,Y) = \Phi(u^\kappa g, u^{\kappa/2} (\pi + \rlie_Y g)).
\]
It follows from \eqref{eqRescConstraints} that the components of
$T_{(g,\pi)}$ are given by
\begin{align} \label{eqTgpi}
\begin{split}
-2 \mutil
&=
\tfrac {4(n-1)}{n-2} u^{-\kappa-1} \Delta^g u - u^{-\kappa}\scal^g - n(n-1)
+ 2 u^{-\tfrac{\kappa}{2}} \tr^g \pi \\
&\qquad
- \tfrac{1}{n-1} u^{-\kappa}(\tr^g \pi)^2
+ u^{-\kappa}\left( |\pi|_g^2 + 2 \langle \pi, \rlie_Y g \rangle
+ |\rlie_Y g|_g^2 \right), \\
\Jtil_j
&=
u^{-\tfrac{\kappa}{2}}(\Delta_L Y + \divg^g \pi)_j
+\tfrac{2(n-1)}{n-2} u^{-\tfrac{\kappa}{2}-1} (\pi + \rlie_Y g )^k_j \nabla_k u \\
&\qquad
-\tfrac{2}{n-2} u^{-\tfrac{\kappa}{2}-1} \tr^g \pi \nabla_j u,
\end{split}
\end{align}
for $j=1,2,\dots,n$. From this formula it is straightforward to compute
the linearization
\[ \begin{split}
DT_{(g,\pi)}|_{(1,0)} (v,Z)
&=
\Big( \tfrac{4(n-1)}{n-2} \Delta^g v + \tfrac{4}{n-2} \scal^g v
- \tfrac{4}{n-2}(\tr^g \pi) v \\
&\qquad
+\tfrac{\kappa}{n-1} (\tr^g\pi)^2 v- \tfrac{4}{n-2}|\pi|_g^2 v + 2\langle \pi,
\rlie_Z g \rangle, \\
&\qquad
(\Delta_L Z)_j- \tfrac{2}{n-2} (\divg^g \pi)_j v
+ \tfrac{2(n-1)}{n-2}\pi^k_j \nabla_k v
- \tfrac{2}{n-2}(\tr^g \pi) \nabla_j v \Big).
\end{split} \]
Since $\scal^g + n(n-1) \in C_\tau ^{k-2,\alpha}$, we may argue as in the
proof of Lemma~\ref{lemDT} and show that for $2\leq l \leq k$ the operator 
$DT_{(g,\pi)}|_{(1,0)}$ is Fredholm of index zero as an operator
$C^{l,\alpha}_\delta \to C^{l-2,\alpha}_\delta$ for $-1 < \delta < n$ and 
as an operator $W^{l,p}_\gamma \to W^{l-2,p}_\gamma$ for 
$-1 < \gamma + \tfrac{n-1}{p} < n$. We  can now
choose a sufficiently large $p>n$ and $\gamma $ such that $\tau' < \gamma < \tau - \tfrac{n-1}{p}$ in which case both $C^{l,\alpha}_{\tau} \hookrightarrow W^{l,p}_\gamma$ for $l=0,1,\ldots, k$ and the operator $DT_{(g,\pi)}|_{(1,0)}: W^{2,p}_\gamma \to W^{0,p}_\gamma$ is Fredholm  of index zero. Let $U$ be the subspace complementing the kernel of $DT_{(g,\pi)}|_{(1,0)}$ in $W^{2,p}_\gamma$. Arguing as in the proof of Lemma~\ref{lemSur} we conclude that there exist finitely
many pairs of compactly supported symmetric $2$-tensors $(h_i,w_i)\in C^{k+1,\alpha}_c$,
$i=1,\dots, N$, such that their images $D\Phi|_{(g,\pi)}(h_i,w_i)$ form a
basis for a subspace which complements
$DT_{(g,\pi)}|_{(1,0)}(W^{2,p}_\gamma)$ in $ W^{0,p}_\gamma$. Set
$V \definedas \operatorname{span}\{(h_i,w_i)\}_{i=1,\dots,N}$. We
define the map $\Xi_{(g,\pi)}: U \times V \to  W^{0,p}_\gamma$ by
\begin{equation} \label{eqXi}
\Xi_{(g,\pi)}:
(u,Y,h,w) \mapsto \Phi (u^\kappa g + h , u^{\kappa/2} (\pi + \rlie_Y g) + w).
\end{equation}
Then the linearization
$D\Xi_{(g,\pi)}|_{(1,0,0,0)}: U \times V \to W^{0,p}_\gamma$
is given by
\[
D\Xi_{(g,\pi)}|_{(1,0,0,0)}: 
(v,Z, \eta, \omega) \mapsto 
DT_{(g,\pi)}|_{(1,0)} (v,Z) + D\Phi|_{(g,\pi)} (\eta, \omega)
\]
and is an isomorphism by construction. 

Using the chart at infinity
$\Psi : M\setminus K_0 \to \bH^n \setminus B_{R_0}$, we define the cut-off
function $\chi_\lambda (x)= \chi(r(x)/\lambda)$, where $\chi:\bR \to \bR$
is a smooth function satisfying $\chi(r)=1$ for $r\leq 1$ and $\chi(r)=0$
for $r\geq 2$. For a sufficiently large $\lambda>0$, the cut-off initial
data $(g_\lambda ,\pi_\lambda)$ is given by
\[ 
g_\lambda = \chi_\lambda g + (1-\chi_\lambda)\Psi^* b,
\qquad
\pi_\lambda = \chi_\lambda \pi.
\] 
Now for any $\tau_1 < \tau$ we have
\begin{equation} \label{eqCloseData}
\|g - g_\lambda\|_{C^{k,\alpha}_{\tau_1}}\to 0
\qquad \text{and} \qquad
\|\pi - \pi_\lambda \|_{C^{k-1,\alpha}_{\tau_1}}\to 0
\end{equation}
as $\lambda\to \infty$. Hence we also have
\begin{equation} \label{eqCloseConstr}
\|\Phi(g,\pi)- \Phi(g_\lambda,\pi_\lambda)\|_{C^{k-2,\alpha}_{\tau_1}} \to 0
\end{equation}
as $\lambda\to \infty$.

Similarly to \eqref{eqXi}, we define the map
$\Xi_{(g_\lambda, \pi_\lambda)}:U \times V \to W^{0,p}_\gamma$ by
\[
\Xi_{(g_\lambda,\pi_\lambda)}: (u,Y,h,w) \mapsto
\Phi (u^\kappa g_\lambda + h , u^{\kappa/2} (\pi_\lambda + \rlie_Y g_\lambda) + w).
\]
The linearization $D\Xi_{(g_\lambda,\pi_\lambda)}|_{(1,0,0,0)}:
U \times V \to  W^{0,p}_\gamma$ is given by
\[
D\Xi_{(g_\lambda,\pi_\lambda)}|_{(1,0,0,0)}:
(v,Z, \eta, \omega) \mapsto
DT_{(g_\lambda,\pi_\lambda)}|_{(1,0)} (v,Z) 
+ D\Phi|_{(g_\lambda,\pi_\lambda)} (\eta, \omega).
\]
As a consequence of \eqref{eqCloseData}, the operators
$D\Xi_{(g_\lambda,\pi_\lambda)}|_{(1,0,0,0)}$ converge to the isomorphism
$D\Xi_{(g,\pi)}|_{(1,0,0,0)}$ as $\lambda\to \infty$ in the uniform operator
topology. It follows that there exists a positive $\lambda_0$ such that for
any $\lambda \geq \lambda_0$ the linearization
$D\Xi_{(g_\lambda,\pi_\lambda)}|_{(1,0,0,0)}$ is an isomorphism. Note that
$\Xi_{(g_\lambda,\pi_\lambda)} (1,0,0,0) = \Phi(g_\lambda, \pi_\lambda)
= (-2\mu_\lambda, J_\lambda)$. Applying the Inverse Function Theorem, see 
for example \cite[Theorem~4.2 and Remark~4.3]{InverseFunction}), it is
not complicated to check that there exists $\rho_0>0$ depending only
on $(g,\pi)$ such that $\Xi_{(g_\lambda,\pi_\lambda)}: B_{\rho_0} (1,0,0,0) \to
\Xi_{(g_\lambda,\pi_\lambda)} \left(B_{\rho_0} (1,0,0,0)\right)$ is a diffeomorphism
for any $\lambda \geq \lambda_0$. Furthermore, there exists a constant
$C>0$ depending only on $(g,\pi)$ such that 
\begin{equation} \label{eqContInverse}
C \|(u,Y,h,w) - (1,0,0,0)\|_{W^{2,p}_{\gamma}}
\leq
\| 
\Xi_{(g_\lambda,\pi_\lambda)}(u,Y,h,w) - (-2\mu_\lambda, J_\lambda) 
\|_{W^{0,p}_{\gamma}}
\end{equation}
holds for any $(u, Y, h, w) \in B_{\rho_0} (1,0,0,0)$, and such that
\[
B_{C \rho_0} (-2\mu_\lambda, J_\lambda) \subset
\Xi_{(g_\lambda,\pi_\lambda)} \left(B_{\rho_0} (1,0,0,0)\right).
\]

Now suppose that $(\mubar,\Jbar)$
is such that $\|(2(\mu-\bar{\mu}), J-\bar{J})\|_{C^{0,\alpha}_\tau} \leq
\delta$. By \eqref{eqCloseConstr} we may assume that
$\lambda \geq \lambda_0$ is such that
$\|(2(\mu-\mu_\lambda), J-J_\lambda)\|_{C^{0,\alpha}_{\tau_1}} \leq \delta$ where $\tau_1 < \tau$. If we further assume that $\tau_1 > \gamma + \frac{n-1}{p}$ so that $C^{0,\alpha}_{\tau_1} \hookrightarrow W^{0,p}_\gamma$ it follows that there exists $\delta_0>0$ depending on $C$ and $\rho_0$ such that 
$(-2\bar{\mu},\bar{J}) \in B_{C \rho_0} (-2\mu_\lambda, J_\lambda)$ as long as $\delta \leq \delta_0$. Then it
follows from the above discussion that there exists
$(u,Y,h,w) \in B_{\rho_0} (1,0,0,0)$ such that
$\Xi_{(g_\lambda,\pi_\lambda)}(u,Y,h,w) = (-2\bar{\mu},\bar{J})$. As a consequence
of \eqref{eqContInverse} 
\[
\|(u-1,Y,h,w)\|_{W^{2,p}_{\gamma}} \leq C_0 \delta
\]
for a uniform constant $C_0$. By the Sobolev embedding we have $(u-1,Y) \in C^{1,\alpha}_{\gamma}$ and it follows from \eqref{eqTgpi} that outside of a compact set
$(u,Y)$ satisfies 
\begin{equation} \label{eqConfIteration}
\begin{split}
-2u^{\kappa+1} \bar{\mu}
&=
\tfrac{4(n-1)}{n-2}\Delta^b u + n(n-1) (u-u^{\kappa+1}) + u |\rlie_Y b|_b^2, \\
u^{\kappa/2} \bar{J}_j
&=
(\Delta_L Y)_j + \tfrac{2(n-1)}{n-2} u^{-1} (\rlie_Y b)^k_j \nabla_k u.
\end{split}
\end{equation}
Consequently, $v=u-1$ and $Y$ are such that
$(v,Y) \in C^{1,\alpha}_{\gamma}$ and recalling that $\gamma>n/2$ it follows from \eqref{eqConfIteration} that
\[
(\Delta^b v - n v, \Delta_L Y ) \in C^{0,\alpha}_{\tau}.
\]
Hence $(v,Y) \in C^{2,\alpha}_\tau$ by the improved elliptic regularity
\cite[Proposition~6.5]{LeeFredholm}. Further regularity follows by a standard bootstrap argument.

It is now straightforward to check that there exists a constant $C_0 > 0$
such that the initial data
\[
\bar{g} = u^\kappa g_\lambda + h, \qquad
\bar{\pi} = u^{\kappa/2} (\pi_\lambda + \rlie_Y g_\lambda) + w
\]
has all the required properties. In particular, the last claim of the theorem follows using the fact that $\|(u-1,Y)\| _{ C^{1,\alpha}_{\tau'}} \leq C_0 \delta$ (up to increasing $C_0$ if necessary) as a consequence of $\gamma > \tau'$ and the Sobolev embedding, and applying Schauder estimates \cite[Lemma~4.8(b)]{LeeFredholm} throughout.
\end{proof}

The following result is not complicated to prove.

\begin{proposition} \label{propConfHyp}
Suppose that $(M,g,\pi)$ is an asymptotically hyperbolic initial data
set of type $(k,\alpha, \tau, \tau_0)$ for $k\geq 2$, $0 < \alpha < 1$ and $\tau_0 \geq 1$ such that 
\[
g=u^\kappa b, \qquad \pi = u^{\kappa/2} \rlie_Y b
\]
outside of a compact set for some $(u-1,Y)\in C^{k,\alpha}_{\tau}$. Then
$(g,\pi)$ has conformally hyperbolic asymptotics in the sense of
Definition~\ref{defConfHypID}. 
\end{proposition}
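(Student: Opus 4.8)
The plan is to read the expansion \eqref{eqConfHyp} off the elliptic equations satisfied by $v := u-1$ and $Y$ near infinity. Since $g=u^\kappa b$ and $\pi=u^{\kappa/2}\rlie_Y b$ outside a compact set and the energy and momentum densities of $(g,\pi)$ are $\mu$ and $J$, equations \eqref{eqConfIteration} hold on $\bH^n\setminus B_R$ for $R$ large, with $\bar\mu,\bar J$ replaced by $\mu,J$. Using $\Delta^b u=\Delta^b v$, the Taylor expansion $u-u^{\kappa+1}=-\kappa v+O(v^2)$, and dividing the scalar equation by $\tfrac{4(n-1)}{n-2}$ (so that the coefficient of $v$ becomes exactly $n$, since $n(n-1)\kappa\cdot\tfrac{n-2}{4(n-1)}=n$), the first equation takes the form
\[
\Delta^b v - n v = F_1,
\]
where $F_1$ is a fixed multiple of $u^{\kappa+1}\mu$ plus terms quadratic or of higher order in $v$, $\nabla v$, and $\rlie_Y b$; the second equation reads $\Delta_L Y = F_2$ with $F_2 = u^{\kappa/2}J - \tfrac{2(n-1)}{n-2}u^{-1}(\rlie_Y b)^k_j\nabla_k v$. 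By Definition~\ref{defAHdata} we have $(\mu,J)\in C^{0,\alpha}_{n+\tau_0}$ with $\tau_0\geq 1$, while $(v,Y)\in C^{2,\alpha}_\tau$; assuming $\tau>\tfrac{n}{2}$ (the general case reduces to this one after first improving the decay of $(v,Y)$ by elliptic estimates below the weight $n$, using that $\mu,J$ force $(\Delta^b-n)v$ and $\Delta_L Y$ to decay faster than $e^{-\tau r}$), the non-linear contributions lie in $C^{0,\alpha}_{2\tau}$, so $(F_1,F_2)\in C^{0,\alpha}_{n+\rho}$ with $\rho:=\min\{\tau_0,\,2\tau-n\}>0$.

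Next I would invoke the analysis of solutions of $\Delta^b-n$ and of the vector Laplacian $\Delta_L$ at and above the exceptional weight $n$, carried out in Appendix~\ref{secAbove}, of which Proposition~\ref{propCritical} is a part. Since $v\in C^{2,\alpha}_\tau$ with $\tau<n$ solves $(\Delta^b-n)v=F_1\in C^{0,\alpha}_{n+\rho}$, this yields $v=v_0e^{-nr}+v_1$ with $v_0$ independent of $r$ and $v_1\in C^{2,\alpha}_{n+\min\{\rho,1\}}$; in particular $v\in C^{2,\alpha}_n$. The analogous statement for $\Delta_L Y=F_2$, whose indicial roots force the leading behavior $e^{-nr}$ for the radial component and $e^{-(n-1)r}$ for the tangential component, gives $Y_r=(Y_0)_r e^{-nr}+(Y_1)_r$ and $Y_\varphi=(Y_0)_\varphi e^{-(n-1)r}+(Y_1)_\varphi$ with $(Y_0)_r,(Y_0)_\varphi$ independent of $r$ and $Y_1\in C^{2,\alpha}_{n+\min\{\rho,1\}}$; in particular $Y\in C^{2,\alpha}_{n-1}$. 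The appendix results also provide $(v_0,Y_0)\in C^{2,\alpha}_{loc}$, as required in \eqref{eqConfHyp}.

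If $\rho\geq1$ this is already Definition~\ref{defConfHypID}. Otherwise a single bootstrap step finishes the argument: feeding the improved decay $v\in C^{2,\alpha}_n$, $Y\in C^{2,\alpha}_{n-1}$ back into $F_1$ and $F_2$, and using $\tau_0\geq1$ together with $n\geq3$, one checks that $(F_1,F_2)\in C^{0,\alpha}_{n+1}$ — the slowest-decaying non-linear term is $u|\rlie_Y b|_b^2\in C^{0,\alpha}_{2(n-1)}$, and $2(n-1)\geq n+1$ precisely when $n\geq3$. Re-applying the appendix results then produces \eqref{eqConfHyp} with remainders $(v_1,Y_1)\in C^{2,\alpha}_{n+1}$, so $(g,\pi)$ has conformally hyperbolic asymptotics.

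The only genuinely non-trivial ingredient here is the appendix input on the structure of solutions near infinity — in particular the fact that the radial and tangential parts of a $1$-form in the asymptotic kernel of $\Delta_L$ decay at the two different rates $e^{-nr}$ and $e^{-(n-1)r}$, which is exactly what dictates the three distinct exponents in \eqref{eqConfHyp}. Everything else (the precise form of the constraint equations, the Taylor expansions, the weighted-Hölder bookkeeping of the quadratic terms, and the single iteration) is routine.
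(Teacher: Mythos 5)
Your proposal is correct and follows essentially the same route as the paper: read off from \eqref{eqConfIteration} that $(\Delta^b v - nv,\,\Delta_L Y)$ lies in $C^{0,\alpha}_{n+\varepsilon}$, extract the decomposition \eqref{eqConfHyp} from the ODE/indicial analysis behind Proposition~\ref{propCritical} (with the two distinct rates $e^{-nr}$ and $e^{-(n-1)r}$ for $Y_r$ and $Y_\varphi$), and then perform one bootstrap to push the remainder into $C^{2,\alpha}_{n+1}$. Your extra bookkeeping (the explicit $\rho=\min\{\tau_0,2\tau-n\}$ and the remark on $\tau>\tfrac{n}{2}$) only makes explicit what the paper leaves implicit; note only that the first pass actually gives $Y\in C^{2,\alpha}_{n}$ in the invariant weighted norm (the $e^{-(n-1)r}$ rate is for the coordinate component $Y_\varphi$), though your weaker bound $C^{2,\alpha}_{n-1}$ still closes the iteration for $n\geq 3$.
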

\begin{proof}
Let $v=u-1$. It follows from \eqref{eqConfIteration} that outside of
a compact set
\[
(\Delta^b v - n v, \Delta_L Y ) \in C^{k-2,\alpha}_{n+\varepsilon}
\]
for some $\varepsilon > 0$. Then $(v,Y) \in C^{k,\alpha} _n$ by Proposition
\ref{propCritical}. More specifically, from the proof of Proposition
\ref{propCritical} we see that $(v,Y)$ is of the form \eqref{eqConfHyp},
where $(v_0, Y_0)$ does not depend on $r$, and
$(v_1,Y_1) \in C^{2,\alpha}_{k+\varepsilon}$. 

Inserting $u=v+1$ and $Y$ in \eqref{eqConfIteration} we conclude 
that $(v_1,Y_1) \in C^{k,\alpha}_{n+\varepsilon}$ satisfies 
\[
(\Delta^b v_1 - n v_1, \Delta_L Y_1 ) \in C^{k-2,\alpha}_{n+1}.
\]
Thus $(v_1,Y_1) \in C^{k,\alpha} _{n+1}$ by Proposition~\ref{propCritical}.
\end{proof}

\begin{proof}[Proof of Theorem~\ref{thConfHyp}]
By Theorem~\ref{thStrictDEC} we may without loss of generality assume
that $\mu > (1 + \gamma) |J|_g$ for some $\gamma > 0$. Let $\xi$ be a 
smooth function such that $\xi(x) = e^{-r(x)}$ outside a compact set. For 
$\chi_\lambda$ as in the proof of Lemma~\ref{lemConfHyp} set 
$\xi_\lambda \definedas \chi_\lambda + (1 - \chi_\lambda) \xi$. Then 
$ (\xi_\lambda \mu,\xi_\lambda J) \in C^{k-2,\alpha} _{n + 1 + \tau_0}$ and
\begin{equation} \label{eqConstrConv}
\|(\mu,J) - (\xi_\lambda \mu, \xi_\lambda J)\|_{C^{k-2,\alpha}_{n+\tau_0'}} \to 0
\end{equation}
as $\lambda \to \infty$ for any $\tau_0'<\tau_0$. By Lemma~\ref{lemConfHyp} 
and Proposition~\ref{propConfHyp} we may construct initial data sets
$(g_\lambda,\pi_\lambda)$ with conformally hyperbolic asymptotics, whose 
energy and momentum densities are $(\xi_\lambda \mu, \xi_\lambda J)$, and 
such that
\begin{equation} \label{eqDataConv}
\|g - g_\lambda\|_{C^{k,\alpha}_{\tau'}} \to 0, \qquad 
\|\pi - \pi_\lambda\|_{C^{k-1,\alpha}_{\tau'}} \to 0
\end{equation}
for any $\tau'<\tau$ as $\lambda \to \infty$. In particular, 
$\|g-g_\lambda\|_{C^0} \to 0$ as $\lambda \to \infty$ thus 
$|J|^2_{g_\lambda} = |J|_g^2 (1 + o(1))$ as $\lambda \to \infty$. 
It follows that 
\[
\xi_\lambda \mu > \xi_\lambda (1+\gamma) |J|_g
\geq
\left(1+\frac{\gamma}{2}\right) |\xi_\lambda J|_{g_\lambda} 
\]
for $\lambda$ sufficiently large, hence $(g_\lambda,\pi_\lambda)$ satisfies 
the strict dominant energy condition. Further, since \eqref{eqConstrConv} 
and \eqref{eqDataConv} hold for any 
$0 < \tau'_0 < \tau_0$ and $\tfrac{n}{2} < \tau' <\tau$, it follows by
Proposition~\ref{propContinuity} that
\[
\mathcal{M}_{(g_\lambda,\pi_\lambda)}(V) \to \mathcal{M}_{(g,\pi)} (V)
\] 
for all $V\in \{V_{(0)}, V_{(1)}, \ldots, V_{(n)}\}$ as $\lambda \to \infty$.
\end{proof}

\section{Initial data sets with Wang's asymptotics}
\label{secWang}

In this section we refine the results of Section~\ref{secStrictDEC} for another important class of asymptotically hyperbolic initial data.

\begin{definition} \label{defWangData}
Let $(M,g,\pi)$ be an asymptotically hyperbolic initial data set of type
$(k,\alpha,n,\tau_0)$ for $k\geq 2$, $0 \leq \alpha < 1$ and $\tau_0>0$. We say that $(M,g,\pi)$ has
{\em Wang's asymptotics} with respect to the chart at infinity
\[
\Psi: M \setminus K_0 \to \bH^n \setminus \overline{B}_{R_0}
\]
if the following holds:
\begin{enumerate}
\item
The pushforward of the metric $g$ under $\Psi$ satisfies
\begin{equation} \label{eqWangMetric}
\Psi_* g = dr^2 + \sinh^2 r g_r ,
\end{equation}
where 
\[
g_r = \sigma + m e^{-nr} + O^{k,\alpha}(e^{-(n+1)r})
\]
is an $r$-dependent family of symmetric $2$-tensors on $S^{n-1}$,
$m \in C^{k,\alpha}$ is a symmetric 2-tensor on $S^{n-1}$, and the expression $O^{k,\alpha}(e^{-(n+1)r})$ stands for a tensor in the weighted H\"older space $C^{k,\alpha}_{n+1}(\bH^n)$.
\item
The pushforward of the $2$-tensor $\pi$ under $\Psi$ satisfies
\begin{equation}\label{eqWangPi}
\begin{split}
(\Psi_* \pi)_{rr} & = p_{rr} e^{-nr} + q_{rr}, \\
(\Psi_* \pi)_{r\mu} & = p_{r\mu} e^{-(n-1)r} + q_{r\mu}, \\
(\Psi_* \pi)_{\mu\nu} & = p_{\mu\nu} e^{-(n-2) r} + q_{\mu\nu},
\end{split}
\end{equation}
where $p\in C^{k-1,\alpha}_{loc}$ does not depend on $r$,  $q\in C^{k-1,\alpha}_{n+1}$, and $\mu, \nu$ denote components in a
coordinate system on the sphere.
\end{enumerate}
\end{definition}

Asymptotically hyperbolic metrics $g$ with asymptotics
\eqref{eqWangMetric} were considered by Wang in \cite{WangMass} and
have been studied in various contexts, see for example
\cite{AnderssonCaiGalloway}, \cite{BalehowskyWoolgar}, \cite{Neves}, and \cite{NevesTian}.

Note that any sufficiently regular conformally compactifiable metric with the round
sphere as the boundary at infinity and deviating from the
hyperbolic metric at the ``critical'' order $|g-b|_b=O(e^{-nr})$
can be written in the form \eqref{eqWangMetric} in appropriate coordinates.
See, for example, \cite[Section~IV]{BalehowskyWoolgar}, 
\cite[Section~3]{AnderssonCaiGalloway}, or \cite{massaspect}. We will make use of this fact in the proofs of Theorem~\ref{thPerturbationWang1} and Theorem~\ref{thPerturbationWang2} below.

In the case when $(M,g,\pi)$ is an initial data set with Wang's
asymptotics a direct computation shows that the mass functional is
given by
\[
\mathcal{M} (V_{(0)}) =
\tfrac{1}{2(n-1)\omega_{n-1}}
\int_{S^{n-1}} (n \tr_\sigma m - 2 p_{rr}) \, d\mu^\sigma,
\]
and
\[
\mathcal{M} (V_{(i)}) = \tfrac{1}{2(n-1)\omega_{n-1}}
\int_{S^{n-1}} x^i (n \tr_\sigma m - 2 p_{rr}) \, d\mu^\sigma,
\]
for $i=1, \dots, n$, where $m$ and $p_{rr}$ are as in Definition
\ref{defWangData}. Furthermore, we have the following result.

\begin{theorem}\label{thPerturbationWang1}
Let $(M,g,\pi)$ be an asymptotically hyperbolic initial data set of type $(k+1,\alpha,n,\tau_0)$ for $k\geq 2$, $0 < \alpha < 1$ and $\tau_0>0$ such that it has Wang's asymptotics with respect to the chart at infinity $\Psi: M\setminus K_0 \to \mathbb{H}^n \setminus \overline{B}_{R_0}$ and satisfies
the dominant energy condition $\mu\geq |J|_g$. Then, for any
$\varepsilon>0$ there exists an asymptotically hyperbolic initial data
set $(\bar{g}, \bar{\pi})$, with the energy and momentum density 
denoted by $(\mubar,\Jbar)$,  of type  $(k+1,\alpha,n,\tau'_0)$ for some $\tau'_0>0$ such
that
\begin{equation} \label{eqClose}
\|g-\bar{g}\|_{C^{k+1,\alpha}_{n}} < \varepsilon,
\qquad \text{and} \qquad 
\|\pi-\bar{\pi}\|_{C^{k,\alpha}_n} < \varepsilon,
\end{equation} 
the strict dominant energy condition 
\begin{equation} \label{eqStrictDec2}
\bar{\mu} > |\bar{J}|_{\bar{g}}
\end{equation} 
holds, and
\begin{equation} \label{eqMassChange}
|\mathcal{M}_{(g,\pi)}(V) - \mathcal{M}_{(\bar{g},\bar{\pi})}(V)|
< \varepsilon
\end{equation}
for any $V \in \{V_{(0)}, V_{(1)}, \ldots , V_{(n)}\}$. Furthermore, there is a coordinate chart at infinity $\Phi: M\setminus K_0 \to \mathbb{H}^n \setminus \overline{B}_{R_0}$  such that $(M,\bar{g}, \bar{\pi})$ is an asymptotically hyperbolic initial data set of type $(k,\alpha,n,\tau'_0)$ with Wang's asymptotics with respect to this chart.
\end{theorem}

\begin{proof}
Let $\varepsilon>0$ be fixed. Since
$C^{k,\alpha}_n \hookrightarrow C^{k,\alpha}_\tau$ for $n > \tau$ and
$k=0,1,\dots$, we may view $(M,g,\pi)$ as initial data of type $(k+1,\alpha, \tau, \tau_0)$ for $k\geq 2$, $0<\alpha <1$, $\frac{n}{2}<\tau < n$ and $\tau_0 > 0$. Arguing as in the proof of Theorem~\ref{thStrictDEC} one 
shows that there exist $(v,Z)\in C^{k,\alpha}_n$, and $(h,w)\in C^{k+1,\alpha}_c$ such that for
some sufficiently small $t>0$ the perturbed initial data set
\[
\bar{g}=(1+t v)^\kappa (g + t h)
\qquad \text{and} \qquad
\bar{\pi} = (1 + tv)^{\kappa/2} (\pi + t \rlie_Z g + t w)
\]
satisfies \eqref{eqClose} and \eqref{eqStrictDec2}. Moreover, if the positive function $f$ used in this construction is chosen so $f = O(e^{-(n+1)r})$ then we have $(v,Z) = (v_0,Z_0) e^{-nr} + (v_1,Z_1)$ for $(v_0,Z_0)\in C^{k,\alpha}_{loc}$
independent of $r$ and $(v_1,Z_1) \in C^{k,\alpha}_{n+1}$ as a consequence of \eqref{eqLinearT} and the fact that $(M,g,\pi)$ is initial data of type $(k+1,\alpha, n, \tau_0)$ (compare the proof
of Proposition~\ref{propConfHyp}). Since in this case $f$ might decay
faster than $J = O(e^{-(n+\tau_0) r})$, it is not clear that there is a $\gamma>0$
such that $\bar{\mu} > (1+\gamma) |\bar{J}|_{\bar{g}}$. However, this is
not important in the current setting since we do not intend to make a
further perturbation of $(\bar{g},\bar{\pi})$. 

Next we estimate the difference between the masses of the initial data sets $(g,\pi)$ and $(\bar{g},\bar{\pi})$. Outside a compact set we have 
\begin{equation}\label{eqBarredID}
\bar{g}=(1+t v)^\kappa g
\qquad \text{and} \qquad
\bar{\pi} = (1 + tv)^{\kappa/2} (\pi + t \rlie_Z g).
\end{equation}
Set $U = (1 + t v)^\kappa - 1$ , then $\bar{g} - g = U g$, where $U = O_1(tv) = O_1(e^{-nr})$. We also have 
\begin{equation*}
\bar{\pi} - \pi =  \left((1 + t v)^{\kappa/2}- 1 \right) \pi + t (1 + tv)^{\kappa/2} \rlie_Z g = t \rlie_Z b + O(e^{-2nr}).
\end{equation*}
Then a straightforward computation shows that
for any $V\in \{V_{(0)}, V_{(1)}, \ldots , V_{(n)}\}$ we have 
\[ \begin{split}
&
\mathcal{M}_{(\bar{g},\bar{\pi})}(V) - \mathcal{M}_{(g,\pi)}(V)\\ 
&\quad=
\tfrac{1}{2(n-1)\omega_{n-1}} \lim_{R \to \infty}
\int_{S_R} \left((n-1) (U d V - V d U)(\nu)
-2t \rlie_Z b (\nabla^b V,\nu)\right)\, d\mu^b.
\end{split} \]
Consequently, we have
\[
|\mathcal{M}_{(\bar{g},\bar{\pi})}(V) - \mathcal{M}_{(g,\pi)}(V)|
\leq Ct (|v|_{C^1_n} + |Z|_{C^1_n})
\]
for any $V\in \{V_{(0)}, V_{(1)}, \ldots , V_{(n)}\}$ and
\eqref{eqMassChange} follows, after decreasing $t$ if necessary. 

Now recall that $g$ has asymptotic expansion \eqref{eqWangMetric}
with respect to the chart $\Psi$ at infinity. With respect to this chart,
$\bar{g}$ has the expansion
\[
\bar{g} =
(1 + t v)^\kappa dr^2
+ \sinh^2 r
\left(\sigma + ( m + t \kappa v_0 \sigma) e^{-n r} + O^{k+1,\alpha}(e^{-(n+1)r}) \right),
\]
where the term $O^{k+1,\alpha}(e^{-(n+1)r})$ is an $r$-dependent tensor on $S^{n-1}$ as described in Definition \ref{defWangData}. While this expansion is not of the form \eqref{eqWangMetric}, there are standard techniques to find coordinates near infinity in which $\bar{g}$ has the desired form
(see for example \cite[Section 3.2.1]{AnderssonCaiGalloway}, \cite[Section~IV]{BalehowskyWoolgar}, \cite{CGNP}, or \cite[Section~2.2]{massaspect}). 
For completeness, we provide the details here.

First, using the substitution 
\[
r = \arcsinh\left(\sinh^{-1} \rho\right)
\]
we bring $\bar{g}$ to the conformally compact form
\[
\bar{g} = \sinh^{-2} \rho \left((1 + t v)^\kappa d\rho^2 + \sigma + 2^{-n}( m + t \kappa v_0 \sigma)\rho^n  + \eta \right).
\]
The expression in the brackets is a $C^{k+1,\alpha}$-metric on $\{0\leq \rho \leq \rho_0\}$ for some $\rho_0 > 0$, and $\eta=\eta_{\mu\nu} dy^\mu dy^\nu$ is a $\rho$-dependent tensor on $S^{n-1}$ with components $\eta_{\mu\nu}=O^{k+1,\alpha}(\rho^{n+1})$ which is to be understood in the following sense:
\begin{equation}\label{eqBoundaryFallOff}
\begin{split}
f &= O^{K,\alpha} (\rho^N) \Leftrightarrow \\ 
\rho^{l-N} \partial^l_\rho \partial^{(m)} _{y^i} f &\in C^{K-l-|m|,\alpha} (\{0\leq \tau \leq \tau_0\}) \text{ for } 0\leq l+|m| \leq K.
\end{split}
\end{equation}
Note that $\rho$ is a smooth defining function in the sense of \cite{Lee-spectrum} and the manifold has the round sphere $(S^{n-1},\sigma)$ as conformal infinity.  
 
Next, we eliminate the term $2^{-n} t \kappa v_0 \rho^n$ in the coefficient 
\[
(1 + t v)^\kappa = 1 + 2^{-n} t \kappa v_0 \rho^n + O^{k+1,\alpha}(\rho^{n+1})
\]
by changing the defining function according to 
\[
\rho = \tau - \frac{t \kappa}{n 2^{n+1}} v_0\, \tau^{n+1}.
\]
This gives us
\begin{equation}\label{eqMetricInTau}
\begin{split}
\bar{g} = & \sinh^{-2}\tau \left\{\left(1+ O^{k+1,\alpha}(\tau^{n+1})\right)d\tau^2 - \frac{t\kappa}{n 2^n} \tau^{n+1}\partial_\mu v_0 \, d\tau dy^\mu\right. \\
& \quad +  \left[ \sigma_{\mu\nu} + 2^{-n}\left(m_{\mu\nu} + \tfrac{t \kappa (n+1)}{n } v_0 \sigma_{\mu\nu}\right) \tau^n \right.\\ & \qquad +  \left.\left.\frac{t^2 \kappa^2}{n^2 4^{n+1}} \partial_\mu v_0 \partial_\nu v_0 \tau^{2n+2} + O^{k+1,\alpha}(\tau^{n+1})\right] dy^\mu dy^\nu\right\}
\end{split}
\end{equation}
where the notation $O^{K,\alpha}(\tau^N)$ is understood as in \eqref{eqBoundaryFallOff} with $\rho$ replaced by $\tau$. 

We will now perform the change of conformal gauge as described in \cite[Section 3.2.1]{AnderssonCaiGalloway}. The idea is to write 
\[
\gbar \definedas (\sinh \tau)^{-2} \gtil= (\theta \sinh \tau)^{-2} (\theta^2 \gtil)
\]
and then choose the function $\theta$ so that $\theta \sinh \tau = \sinh \chi$, where $\chi$ is the geodesic distance to the boundary $\{\tau = 0\}$ with respect to the metric $\theta^2 \gtil$. For this $\theta$ is required to satisfy the equation
\begin{equation}\label{eqODEnonlin}
\phi \gtil(d\theta, d\theta) + 2 \theta \gtil(d\theta, d\phi) = \theta^4 \phi + \theta^2 \phi^{-1} (1 - \gtil (d\phi, d\phi)),
\end{equation}
where $\phi = \sinh \tau$, see \cite[Section 3.2.1]{AnderssonCaiGalloway}. This is a first order PDE with characteristics transversal to the boundary $\{\tau = 0\}$ so the solution $\theta$ satisfying the boundary condition $\theta = 1$ exists in $\{0 \leq \tau \leq \tau_0\}$ for some $\tau_0>0$. Due to the regularity of the coefficients of \eqref{eqODEnonlin} we conclude that $\theta \in C^{k,\alpha}(\{0 \leq \tau \leq \tau_0\})$\footnote{In particular, we only have $\phi^{-1} (1 - \gtil (d\phi, d\phi)) \in C^{k,\alpha}(\{0 \leq \tau \leq \tau_0\})$, see also \eqref{eqMetricInTau}. To establish the regularity of the solution, one may argue as in the proof of \cite[Theorem 22.39]{LeeBook} while keeping track of regularity as it is done in the proof of \cite[Lemma 5.1]{Lee-spectrum}. The details are left to the reader.}, that is at this step there is a loss of regularity by one derivative. 
Similar to \cite[Section 3.2.1]{AnderssonCaiGalloway} we conclude that $\theta = 1 + O^{k,\alpha}(\tau^{n+1})$ and $\chi = \tau(1 + O^{k,\alpha}(\tau^{n+1}))$. All in all, we obtain
\[
\bar{g} = \sinh^{-2} \chi \left(d\chi^2 + \sigma + 2^{-n}\left(m + \tfrac{t \kappa (n+1)}{n } v_0 \sigma\right) \chi^n  + \bar{\eta}\right),
\]
where $\bar{\eta}=\bar{\eta}_{\mu\nu} dy^\mu dy^\nu$ is a $\chi$-dependent tensor on $S^{n-1}$ with components $\eta_{\mu\nu}=O^{k,\alpha}(\chi^{n+1})$ in the sense of \eqref{eqBoundaryFallOff} with $\rho$ replaced by $\chi$.

We conclude by performing the coordinate change $\bar{r}=\arcsinh(\sinh^{-1}\chi)$ and obtain 
\[
\bar{g}
= d\bar{r}^2 + \sinh^2 \bar{r}
\left( \sigma + (m + \tfrac{t\kappa(n+1)}{n} v_0 \sigma) e^{-n \bar{r}}
+ O^{k,\alpha}(e^{-(n+1)\bar{r}}) \right),
\]
with the $O^{k,\alpha}(e^{-(n+1)\bar{r}})$  as in Definition \ref{defWangData}, which is in the form \eqref{eqWangMetric}.
Finally, we note that the coordinate change $r\to\bar{r} = r - \frac{t\kappa}{2n} v_0 e^{-n r} + O^{k,\alpha}(e^{-(n+1) r})$ does not change the mass of the initial data set $(\bar{g},\bar{\pi})$ which is readily checked by a direct computation. In fact, the effect of this change can be described in rough terms as moving the mass content of the metric $\bar{g}$ from the radial part $\bar{g}_{rr}$ to the tangential part $\bar{g}_{\mu\nu}$, while preserving the mass. 
\end{proof}

The change of the radial coordinate performed in the proof of Theorem~\ref{thPerturbationWang1} gives us a mean to modify conformally hyperbolic asymptotics to Wang's asymptotics. In particular, it is straightforward to obtain the following consequence of Theorem~\ref{thConfHyp}.

\begin{theorem}\label{thPerturbationWang2}
Let $(M,g,\pi)$ be an asymptotically hyperbolic initial data set of type
$(k+1, \alpha, \tau, \tau_0)$ for $0<\alpha<1$, $\tfrac{n}{2} < \tau < n$ and
$\tau_0>0$. Assume that the dominant energy condition $\mu \geq |J|_g$
holds. Then for every $\varepsilon>0$ there exists
an asymptotically hyperbolic initial data set $(\bar{g},\bar{\pi})$ of type
$(k, \alpha, n , \tau'_0)$ for some $\tau'_0>0$ with Wang's asymptotics (possibly with respect to a different chart at infinity) satisfying 
the strict dominant energy condition 
\[
\bar{\mu} > |\bar{J}|_{\bar{g}}
\]
and such that 
\[
|\mathcal{M}_{(g,\pi)} (V) - \mathcal{M}_{(\bar{g},\bar{\pi})}(V)|
< \varepsilon
\]
for any $V \in \{V_{(0)}, V_{(1)}, \ldots , V_{(n)}\}$.
\end{theorem}

\begin{proof}
By Theorem~\ref{thConfHyp}, we can approximate $(g,\pi)$ by an initial data set $(\bar{g},\bar{\pi})$ of the same regularity and with conformally hyperbolic asymptotics. Noting that outside a compact set $(\bar{g},\bar{\pi})$ is of the form \eqref{eqBarredID} with $(g,\pi)=(b,0)$ the result follows by performing the coordinate change $r\to \bar{r}$ as in the proof of Theorem~\ref{thPerturbationWang1}.
\end{proof}

\section{Concluding remarks}
\label{secRemarks}

\begin{remark}
In this paper we have focused on the charge integrals
$\mathbb{Q}_{(V, -dV)}$, where $V\in \{V_{(0)}, V_{(1)},\ldots, V_{(n)}\}$.
These charge integrals are associated (as described in
Section~\ref{secCharges}) to the Killing vectors
$\partial_t, \partial_{x^1}, \ldots, \partial_{x^n}$ of Minkowski spacetime
which generate infinitesimal translations in time and space. One may ask
if the analogue of Theorem~\ref{thConfHyp} can be proven for the remaining
charges associated with the Killing vectors
$x^i \partial_t + t \partial_{x^i}$, $1 \leq i \leq n$, and
$x^i \partial_{x^j} - x^j \partial_{x^i}$, $1\leq i < j \leq n$, which
generate respectively infinitesimal boosts and rotations. In fact, using
the general theory by Michel \cite[Section~IV.B]{MichelMass} it is
straightforward to check that these charges are well-defined and
continuous under the assumptions of Proposition~\ref{propWellDefMass}
and Proposition~\ref{propContinuity}. As a consequence, we expect that
the perturbation results of this paper can be extended to apply to these
charges as well. Note that this is quite different from the situation
in the asymptotically Euclidean setting, where the charges associated with
boosts and rotations are determined by terms of lower order in the asymptotic expansion of the initial data set than the charges associated with translations in time and space. 
For this reason a given asymptotically Euclidean initial data set can be
perturbed slightly to achieve any value of angular momentum and center
of mass in such a way that the mass and linear momentum do not change,
see Huang, Schoen, and Wang \cite{HuangSchoenWang}. (In particular, this
shows that the mass and angular momentum inequality will in general not
hold for asymptotically Euclidean initial data sets without the assumption
of axial symmetry.) One does not expect such a result to hold in the
asymptotically hyperbolic setting. 
\end{remark}

\begin{remark}
Using the appropriate notion of mass (see for example \cite[Section 4.2]{MichelMass} or \cite[Section 4]{CoM}) it is straightforward to extend our results to the case of asymptotically hyperbolic initial data representing slices of asymptotically anti-de Sitter spacetimes. 
\end{remark}

\begin{remark}
Regarding the extension of the results to the case of weighted Sobolev
spaces, note that in this case it is not possible to rely on the
beautiful work of Lee \cite{LeeFredholm}. Instead, methods for
operators \emph{asymptotic to geometric operators on hyperbolic space}
(compare Bartnik \cite[Definition~1.5]{Bartnik}) can be used, see for
example the proof of Lemma~\ref{lemDT}. Some results in this direction 
have been obtained in \cite{DelayFougeirol}.
\end{remark}

\appendix
\section{Fredholm operators on asymptotically hyperbolic manifolds: chart-dependent approach}
\label{appFredholm}

Theorem~C in Lee's monograph \cite{LeeFredholm} proves the Fredholm
property of geometric elliptic operators acting on weighted Sobolev
and H\"older spaces on conformally compact manifolds. In this appendix
we will show that the same result holds for asymptotically hyperbolic
manifolds in the sense of Definition~\ref{defAHmanifold}. We use the
same definition of geometric tensor bundles and geometric elliptic partial
differential operators as in the cited monograph.

Let $(M,g)$ be a $C^{l,\beta}_\tau$-asymptotically hyperbolic $n$-manifold
in the sense of Definition~\ref{defAHmanifold} for $n\geq 2$, $l\geq 2$,
$0\leq \beta< 1$, and $\tau>0$. Let
$\Psi: M\setminus K_0 \to \bH^n \setminus \overline{B}_{R_0}$ be the chart
at infinity. Given a geometric elliptic partial differential operator
$P: C^\infty (M;E) \to C^\infty (M;E)$ of order $m \leq l$ we define the
{\em indicial map} $I_s(P): E|_{S^{n-1}} \to E|_{S^{n-1}}$ by setting
\[
I_s(P)\overline{u} \definedas \lim_{r\to \infty} e^{sr} P(e^{-sr} \overline{u}).
\]
Following \cite[Section~4]{LeeFredholm}, we call $s \in \mathbb{C}$ a
{\em characteristic exponent} at $p\in S^{n-1}$ if $I_s(P)$ is singular at
$p$. Using the fact that $|\Psi_* g - b |_b = O(e^{-\tau r})$ it is
not complicated to check that the characteristic exponents of $P$ are
constant on $S^{n-1}$. Further, if $P$ is formally self-adjoint one may verify
that the set of characteristic exponents is symmetric about the line
$\operatorname{Re} s = \tfrac{n-1}{2} - k$, where $k=k_1 - k_2$ is the
rank of the geometric tensor bundle $E \subset T^{k_1}_{k_2} M$, see
\cite[Proposition~4.4]{LeeFredholm}. Similarly to the conformally
compact case, we define the {\em indicial radius} of $P$ as the smallest
non-negative number $R$ such that $P$ has a characteristic exponent
whose real part is $\tfrac{n-1}{2} -k + R$. 
 
\begin{theorem} \label{thmFredholm}
Let $(M,g)$ be a connected asymptotically hyperbolic $n$-manifold of
class $C^{l,\beta}_\tau$, with $n\geq 2$, $l\geq 2$, $0\leq \beta< 1$, and
$\tau>0$ and let $E\rightarrow M$ be a geometric tensor bundle over $M$.
Suppose that $P: C^\infty(M;E) \rightarrow C^\infty(M;E)$ is an elliptic,
formally self-adjoint, geometric partial differential operator of order
$m$, $0 < m \leq l$, and assume that there exists a compact set
$K\subset M$ and a positive constant $C$ such that 
\begin{equation} \label{EstimateInfinity}
\|u\|_{L^2} \leq C \|Pu\|_{L^2}
\end{equation}
for all $u\in C^\infty_c(M\setminus K;E)$. Let $R$ be the indicial
radius of $P$. 
\begin{itemize}
\item
If $1<p<\infty$ and $m\leq k\leq l$ then the natural extension 
\[
P:W^{k,p}_{\delta}(M;E)\rightarrow W^{k-m,p}_\delta(M;E)
\]
is Fredholm for
$|\delta + \tfrac{n-1}{p} - \tfrac{n-1}{2}|<R$.
In that case, its index is zero, and its kernel is equal to the $L^2$ kernel
of $P$.
\item
If $0<\alpha<1$ and $m< k+\alpha \leq l+\beta$ then the natural extension 
\[
P:C^{k,\alpha}_{\delta}(M;E)\rightarrow C^{k-m,\alpha}_\delta(M;E)
\]
is Fredholm for $|\delta-\tfrac{n-1}{2}|<R$. In that case, its index
is zero, and its kernel is equal to the $L^2$ kernel of $P$.
\end{itemize}
\end{theorem}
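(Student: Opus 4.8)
The plan is to adapt the proof of \cite[Theorem~C]{LeeFredholm} to the chart-dependent setting of Definition~\ref{defAHmanifold}, isolating the two places where Lee's argument uses the conformal compactification. The first is the interior elliptic estimate. Since $\Psi_* g - b \in C^{l,\beta}_\tau$, the metric $g$ is uniformly comparable to $b$ in $C^{l,\beta}$ on the unit balls $B_1(x)$ for $x$ near infinity, so the classical $L^p$ and Schauder estimates on a fixed ball, applied in these translated balls, assemble into the semilocal estimate
\[
\| u \|_{X^k_\delta(M \setminus K_1)}
\leq C \left( \| Pu \|_{X^{k-m}_\delta(M)} + \| u \|_{X^k_\delta(K_2)} \right),
\qquad X \in \{ W^{\cdot, p}, C^{\cdot, \alpha} \},
\]
for nested compact sets $K_1 \Subset K_2$; this is \cite[Lemma~4.8]{LeeFredholm} carried over essentially verbatim. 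The second ingredient is the improved (asymptotic) elliptic regularity of \cite[Proposition~6.5]{LeeFredholm}, whose original proof exploits the boundary structure of a conformally compact manifold. I would re-establish it by working directly with the model operator $P_0$, namely the corresponding geometric operator of $(\bH^n, b)$: the decay $\Psi_* g - b = O(e^{-\tau r})$ forces the characteristic exponents of $P$ to be constant on $S^{n-1}$ (as already noted above) and shows that $P - \Psi^* P_0$ has coefficients in $C^{l-m,\beta}_\tau$, so that the mapping properties of $P_0$ on weighted spaces on $\bH^n$, governed by the indicial radius $R$, together with an iteration in the weight, propagate to $P$. The consequence needed below is that any solution of $Pu = 0$ lying in some admissible $X^k_\delta$ in fact lies in $X^k_{\delta'}$ for every $\delta'$ with $|\delta' + \tfrac{n-1}{p} - \tfrac{n-1}{2}| < R$ (resp. $|\delta' - \tfrac{n-1}{2}| < R$ in the H\"older case).

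With these in hand the argument runs in the usual three steps. First, \emph{semi-Fredholmness}: combining the semilocal estimate with interior estimates on the compact part and with the coercivity hypothesis \eqref{EstimateInfinity} near infinity — which lets one absorb the $L^2$-norm of $u$ over $M \setminus K$ — one obtains a global a priori estimate $\| u \|_{X^k_\delta} \leq C ( \| Pu \|_{X^{k-m}_\delta} + \| u \|_{L^p(\Omega)} )$ over a single compact set $\Omega$, and since $X^k_\delta \hookrightarrow L^p(\Omega)$ is compact (the Rellich lemma, the analogue of \cite[Lemma~3.6(d)]{LeeFredholm}), $P$ has finite-dimensional kernel and closed range. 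Second, \emph{kernel $= L^2$ kernel}: by the weight-independence of the kernel just recorded, and choosing the central weight $p = 2$, $\delta = 0$ in the Sobolev case (or an admissible weight with enough decay to sit in $L^2$ in the H\"older case), the kernel of $P$ on $X^k_\delta$ coincides with the $L^2$ kernel for every admissible $\delta$. Third, \emph{index zero}: as $P$ is formally self-adjoint, the cokernel of $P : W^{k,p}_\delta \to W^{k-m,p}_\delta$ is identified, through the $L^2$ pairing and elliptic regularity, with the kernel of $P$ on $W^{k,p^*}_{-\delta}$, where $\tfrac1p + \tfrac1{p^*} = 1$; a direct computation gives $|-\delta + \tfrac{n-1}{p^*} - \tfrac{n-1}{2}| = |\delta + \tfrac{n-1}{p} - \tfrac{n-1}{2}| < R$, so this weight is again admissible and the cokernel is once more the $L^2$ kernel. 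Hence the index is $\dim \ker - \dim \operatorname{coker} = 0$, and finiteness of the cokernel upgrades the semi-Fredholm property to the Fredholm property. The H\"older statement follows from the Sobolev one via the embeddings $C^{k,\alpha}_\delta \hookrightarrow W^{k,p}_{\delta'}$ for suitable $\delta'$, exactly as in \cite{LeeFredholm}.

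The main obstacle, and the only genuinely new work, is the improved elliptic regularity near infinity in the chart-dependent setting: one must understand solutions at the borderline weights, where the model operator $P_0$ on $\bH^n$ just fails to be invertible on the limiting space, and verify that an $O(e^{-\tau r})$ perturbation — with $\tau$ possibly small — does not obstruct the regularity and decay propagation across the admissible band. Once this is available, everything else is a formal consequence, as above, of the Rellich compactness and the formal self-adjointness of $P$.
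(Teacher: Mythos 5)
Your route is essentially the paper's: near infinity one compares $P$ with the corresponding geometric operator $\Pbrev$ on exact hyperbolic space, uses the $O(e^{-\tau R_1})$ smallness of $P-\Pbrev$ outside a large ball to build a parametrix by a Neumann series (this is precisely what delivers the improved elliptic regularity you isolate as the key new ingredient), and then the semi-Fredholm, kernel, and index-zero steps are exactly Lee's, since they no longer use coordinates at infinity. If anything, the construction here is simpler than in the conformally compact case, because the single chart at infinity replaces the countably many M\"obius charts and the partition of unity.

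One point you gloss over, and it is the one place where the argument would not close as written: the mapping properties of the model operator on $\bH^n$ are not ``governed by the indicial radius'' alone. To apply Theorems~5.7 and 5.9 of \cite{LeeFredholm} and conclude that $\Pbrev$ is invertible on $W^{k,p}_\delta(\bH^n;\Ebrev)$ and $C^{k,\alpha}_\delta(\bH^n;\Ebrev)$ in the stated weight ranges, one needs, in addition to the indicial radius, the $L^2$ coercivity estimate for $\Pbrev$ on hyperbolic space; for a general formally self-adjoint geometric operator this is not automatic (without it $P_0$ need not be Fredholm on any weighted space). It must be transferred from hypothesis \eqref{EstimateInfinity} for $P$ to $\Pbrev$, using the $e^{-\tau R_1}$-closeness of the two operators on sections supported outside a large ball together with standard elliptic regularity. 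You invoke \eqref{EstimateInfinity} only when absorbing the $L^2$-norm of $u$ in the semi-Fredholm step for $P$ itself; without also feeding it into the model operator, the decay-propagation/parametrix step has no starting point. Once that transfer is made explicit, the rest of your outline is correct and matches the paper.
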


\begin{proof}
The proof goes as in \cite[Chapter~6]{LeeFredholm}, except for the steps
which explicitly use coordinates at infinity. We verify that these steps
can be carried out in our setting, that is for asymptotically hyperbolic
manifolds as in Definition~\ref{defAHmanifold}. Specifically, we need to
adapt the construction of a parametrix given in Proposition~6.2 and
Corollary 6.3 of \cite{LeeFredholm}. In fact, the adaptation turns out to
be rather straightforward since we have a single chart at infinity naturally
replacing (finitely many) boundary  M\"obius charts of \cite[Chapter~6]{LeeFredholm}.

Let $\Psi: M\setminus K_0 \to \bH^n \setminus B_{R_0}$ be a chart at
infinity as in Definition~\ref{defAHmanifold}. As in
\cite[Appendix A]{NonCMC} we use this chart to construct a bundle
$\Ebrev \to \bH^n$ which is defined using the same $O(n)$-representation
as the one which defines $E$, and an isomorphism
$\Upsilon: \Ebrev|_{\bH^n \setminus B_{R_0}} \to E|_{M\setminus K_0}$. The
isomorphism $\Upsilon$, its inverse and their first $l$ derivatives all
have uniformly bounded norms on $\bH^n \setminus B_{R_0}$, respectively
$M\setminus K_0$. Let
$\Pbrev: C^\infty (\bH^n;\Ebrev) \to C^\infty (\bH^n;\Ebrev) $ be the
operator on hyperbolic space with the same local coordinate expression
as $P$. We define $P': C^\infty (\bH^n \setminus B_{R_0};\Ebrev) \to
C^\infty (\bH^n \setminus B_{R_0};\Ebrev) $ by
\[
P'u = \Upsilon^{-1} P \Upsilon u. 
\]
Let $R_1 \geq R_0$. Since $P$ is a geometric operator and $g$ is
$C^{l,\beta}_\tau$-asymptotically hyperbolic, we conclude that for each
$\delta\in \bR$, $0 < \alpha < 1$, $1 < p < \infty$, and $k$ such that
$m \leq k \leq l$ and $m < k + \alpha \leq l+\beta$ there exists a
positive constant $C$ independent of $R_1$ such that
\begin{equation} \label{eqSmallDiff1}
\|P'u - \Pbrev u\|_{C^{k-m,\alpha}_\delta(\bH^n \setminus B_{R_1};\Ebrev)}
< C e^{-\tau R_1}\|u\|_{C^{k,\alpha}_\delta(\bH^n \setminus B_{R_1};\Ebrev)}
\end{equation}
holds for all $u \in C^{k,\alpha}_\delta (\bH^n \setminus B_{R_0};\Ebrev)$, and
\begin{equation} \label{eqSmallDiff2}
\|P'u - \Pbrev u\|_{W^{k-m,p}_\delta(\bH^n \setminus B_{R_1};\Ebrev)}
< C e^{-\tau R_1} \|u\|_{W^{k,p}_\delta(\bH^n \setminus B_{R_1};\Ebrev)}
\end{equation}
holds for all $u \in W^{k,p}_\delta (\bH^n \setminus B_{R_0};\Ebrev)$.

Now suppose that $P$ satisfies \eqref{EstimateInfinity}. Then, by the
properties of $\Upsilon$, the operator $P'$ also satisfies 
\eqref{EstimateInfinity} (perhaps with a larger constant). Consequently, if $R_1$ is sufficiently large, it 
follows by \eqref{eqSmallDiff2} and standard elliptic regularity that 
$\Pbrev$ satisfies $\|u\|_{L^2}\leq C \|\Pbrev u\|_{L^2}$ for all
$u\in C^\infty_c(\bH^n \setminus B_{R_1};\Ebrev)$, possibly with a larger
value of $C$ than in \eqref{EstimateInfinity}. By
\cite[Theorems 5.7 and 5.9]{LeeFredholm} we conclude that $\Pbrev$ is
invertible as an operator
$W^{k,p}_\delta (\bH^n;\Ebrev) \to W^{k-m}_\delta(\bH^n;\Ebrev)$ for
$|\delta + \tfrac{n-1}{p} - \tfrac{n-1}{2}|<R$ and as an operator
$C^{k,\alpha}_{\delta}(\bH^n;\Ebrev)\rightarrow C^{k-m,\alpha}_\delta(\bH^n;\Ebrev)$
for $|\delta - \tfrac{n-1}{2}|<R$.

Now assume that $R_1\geq R_0$ is sufficiently large and let $K_N$ be such
that $M\setminus K_N= \Psi^{-1} (\bH^n \setminus B_{N R_1})$ for $N=1,2,\ldots$. 
We define two smooth bump functions: $\psi_0$ equal to 1 on $ K_2$  and supported on $K_4$ and 
$\psi_1$ equal to 1 on $M\setminus K_2$ and supported on $M\setminus K_1$. Let
\begin{equation*}
\phi = \frac{\psi_1}{\sqrt{\psi_0^2 + \psi_1^2}},
\end{equation*}
so that $\{1- \phi^2,\phi^2\}$ is a partition of unity subordinate to the cover $\{K_4,M\setminus{K_1}\}$. 
Clearly, $\phi \in C^{l,\beta}(M)$.

We proceed by
defining the operators
$Q, S: C^\infty_c(M, E)\to C^\infty_c(M, E)$ by 
\begin{align*}
Q u &=\phi  \Upsilon \Pbrev^{-1} \Upsilon^{-1} (\phi u), \\
S u &= \phi \Upsilon \Pbrev^{-1}(P'-\Pbrev) \Upsilon^{-1} (\phi u),\\
T u &= \phi \Upsilon \Pbrev^{-1} \Upsilon^{-1} ([\phi, P]u).
\end{align*}
A straightforward computation as in \cite[Proof of Proposition 6.2]{LeeFredholm} shows that
\[
QPu = u + S u + T u.
\]
Furthermore, it follows from the above discussion that $Q$, $S$, and $T$ extend to bounded maps
\begin{align*}
Q &: W^{0,p}_\delta (M\setminus K_1;E) \to W^{m,p}_\delta (M\setminus K_1;E),\\
S &: W^{m,p}_\delta (M\setminus K_1;E) \to W^{m,p}_\delta (M\setminus K_1;E),\\
T &:  W^{m-1,p}_\delta (M\setminus K_1;E) \to W^{m,p}_\delta (M\setminus K_1;E)
\end{align*}
for $|\delta + \frac{n-1}{p} - \tfrac{n-1}{2}|<R$, and to bounded maps
\begin{align*}
Q &: C^{0,\alpha}_\delta (M\setminus K_1;E) \to C^{m,\alpha}_\delta (M\setminus K_1;E),\\
S &: C^{m,\alpha}_\delta (M\setminus K_1;E) \to C^{m,\alpha}_\delta (M\setminus K_1;E),\\
T &:  C^{m-1,\alpha}_\delta (M\setminus K_1;E) \to C^{m,\alpha}_\delta (M\setminus K_1;E)
\end{align*}
for $|\delta - \tfrac{n-1}{2}|<R$.
In particular, as a consequence of \eqref{eqSmallDiff1} and \eqref{eqSmallDiff2},
we see that if $u$ is supported in $M\setminus K_1$, then 
\[
\|S u\|_{W^{m,p}_\delta} \leq C e^{-\tau R_1} \|u\|_{W^{m,p}_\delta},
\quad
\|S u\|_{C^{m,\alpha}_\delta} \leq C e^{-\tau R_1} \|u\|_{C^{m,\alpha}_\delta}
\]
holds for some constant $C$ independent of $R_1$ and $u$. Without loss of
generality we may assume that $Ce^{-\tau R_1} < \tfrac{1}{2}$, and it
follows that the operators 
\begin{align*}
\operatorname{Id} + S
&:
W^{m,p}_\delta (M\setminus K_1;E) \to W^{m,p}_\delta (M\setminus K_1;E), \\
\operatorname{Id} + S
&:
C^{m,\alpha}_\delta (M\setminus K_1;E) \to C^{m,\alpha}_\delta (M\setminus K_1;E)
\end{align*}
have bounded inverses. This implies that, whenever $u$ has support in
$M\setminus K_1$, we have
\[
\widetilde{Q} P u = u + \widetilde{T} u,
\]
where $\widetilde{Q}=(\operatorname{Id} + S)^{-1} \circ Q$ is bounded as
an operator
\begin{align*}
\widetilde{Q} &: W^{0,p}_\delta (M\setminus K_1;E) \to W^{m,p}_\delta (M\setminus K_1;E),\\
\widetilde{Q} &: C^{0,\alpha}_\delta (M\setminus K_1;E) \to C^{m,\alpha}_\delta (M\setminus K_1;E)
\end{align*}
and $\widetilde{T}=(\operatorname{Id} + S)^{-1} \circ T$ is bounded as
an operator
\begin{align*}
\widetilde{T} &: W^{m-1,p}_\delta (M\setminus K_1;E) \to W^{m,p}_\delta (M\setminus K_1;E), \\
\widetilde{T} &: C^{m-1,\alpha}_\delta (M\setminus K_1;E) \to C^{m,\alpha}_\delta (M\setminus K_1;E)
\end{align*}
where $\delta$ is in the same range as above. As a consequence of this
parametrix construction, improved elliptic regularity results
\cite[Proposition~6.5]{LeeFredholm} hold for asymptotically hyperbolic
manifolds as in Definition~\ref{defAHmanifold}. 

The rest of the proof does not use coordinates at infinity, and the
reader is referred to \cite{LeeFredholm} for details.
\end{proof}

\begin{proposition} \label{propModelOperators}
The operator $\Delta - n$ and the vector Laplacian $\Delta_L$ satisfy
the conditions of Theorem~\ref{thmFredholm} with $R=\tfrac{n+1}{2}$.
\end{proposition}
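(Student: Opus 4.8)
The plan is to verify, for each of the two operators, the hypotheses of Theorem~\ref{thmFredholm} --- ellipticity, formal self-adjointness, being a geometric operator of order $2\le l$, and the coercivity estimate \eqref{EstimateInfinity} --- and then to compute the indicial radius. The first three hypotheses are routine: each of $\Delta-n$ (on functions) and $\Delta_L=\divg\rlie$ (on $1$-forms) is a geometric operator of order $m=2\le l$, and each has the same principal symbol as the rough Laplacian, namely $\xi\mapsto-|\xi|_g^2\,\id$, so both are elliptic; that $\Delta-n$ is formally self-adjoint is clear, and for $\Delta_L$ one uses the identity $\int_M\langle\divg S,Y\rangle\,dV=-\tfrac12\int_M\langle S,\rlie_Y g\rangle\,dV$, valid for trace-free symmetric $2$-tensors $S$, which identifies $\divg$ with $-\tfrac12\rlie^*$ on trace-free tensors, so that $\Delta_L=-\tfrac12\rlie^*\rlie$ is formally self-adjoint.

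For the coercivity estimate \eqref{EstimateInfinity} it suffices, by the same perturbation argument that appears in the proof of Theorem~\ref{thmFredholm} (comparing $P$ near infinity with its model operator $\Pbrev$ on $\bH^n$, to which it converges), to establish the estimate on $\bH^n$ itself. On $\bH^n$ the Poincar\'e inequality $\|\nabla u\|_{L^2}^2\ge\tfrac{(n-1)^2}{4}\|u\|_{L^2}^2$ gives
\[
\big\langle(n-\Delta)u,u\big\rangle_{L^2}=\|\nabla u\|_{L^2}^2+n\|u\|_{L^2}^2\ge\Big(\tfrac{(n-1)^2}{4}+n\Big)\|u\|_{L^2}^2=\tfrac{(n+1)^2}{4}\|u\|_{L^2}^2,
\]
so that $\|(\Delta-n)u\|_{L^2}\ge\tfrac{(n+1)^2}{4}\|u\|_{L^2}$; and for $\Delta_L$, integrating by parts and using that $\rlie_Y b$ is trace-free together with a Weitzenb\"ock computation on the Einstein manifold $\bH^n$,
\[
\langle-\Delta_L Y,Y\rangle_{L^2}=\tfrac12\|\rlie_Y b\|_{L^2}^2=\|\nabla Y\|_{L^2}^2+\tfrac{n-2}{n}\|\divg Y\|_{L^2}^2+(n-1)\|Y\|_{L^2}^2\ge(n-1)\|Y\|_{L^2}^2,
\]
so that $\|\Delta_L Y\|_{L^2}\ge(n-1)\|Y\|_{L^2}$. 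In either case the coercivity constant is positive and independent of the support, so \eqref{EstimateInfinity} holds.

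It remains to identify the indicial radius. Writing $b=dr^2+\sinh^2r\,\sigma$ and using $\coth r\to1$, $\operatorname{csch}^2r\to0$ as $r\to\infty$ (the angular part of the Laplacian contributing only decaying terms), one finds $e^{sr}(\Delta-n)(e^{-sr})\to s^2-(n-1)s-n=(s-n)(s+1)$, so $I_s(\Delta-n)=(s-n)(s+1)$; the characteristic exponents are $-1$ and $n$, symmetric about $\tfrac{n-1}{2}$, and $R=n-\tfrac{n-1}{2}=\tfrac{n+1}{2}$. For $\Delta_L$ one decomposes a $1$-form near infinity into its radial component and its tangential component, and the latter further (by Hodge theory on $S^{n-1}$) into a longitudinal part, the differential of a function on $S^{n-1}$, and a transverse co-closed part; this reduces $\Delta_L Y=0$ to scalar ODEs together with a triangular $2\times2$ system for the radial and longitudinal coefficients. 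Measuring all components in the hyperbolic metric (equivalently, in a $b$-orthonormal frame near infinity), each of these reduces asymptotically to the operator $\partial_r^2+(n-1)\partial_r-n$ --- one may also see this from the intertwining identities $\Delta_L\circ d=\tfrac{2(n-1)}{n}\,d\circ(\Delta-n)$ and $\divg\circ\Delta_L=\tfrac{2(n-1)}{n}\,(\Delta-n)\circ\divg$ on $\bH^n$, which reduce the computation to $\Delta-n$. Hence the characteristic exponents of $\Delta_L$ are again $-1$ and $n$, now with multiplicity, symmetric about $\tfrac{n-1}{2}$, and $R=\tfrac{n+1}{2}$.

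The step I expect to require the most care is this last indicial computation for $\Delta_L$: one must set up the tangential decomposition correctly, keep track of the differing radial weights carried by the radial and tangential components of a $1$-form relative to a $b$-orthonormal frame, and verify that the off-diagonal coupling in the radial--longitudinal system is asymptotically of lower order, so that no characteristic exponent is created in the interval $(\tfrac{n-1}{2},n)$. (Alternatively, the indicial roots of the vector Laplacian on asymptotically hyperbolic manifolds may be taken from the literature; cf.\ \cite{NonCMC}.)
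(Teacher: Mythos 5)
Your verification of ellipticity, self-adjointness, and the indicial roots is correct and is essentially the route the paper takes: the paper's own proof simply points to the component computations in the proof of Proposition~\ref{propCritical}, which give exactly the radial operator $\partial^2_{rr}+(n-1)\partial_r-n$ (with roots $-1$ and $n$) for $\Delta-n$ and for each component of $\Delta_L$ once the tangential part is measured in a $b$-orthonormal frame, yielding $R=\tfrac{n+1}{2}$ in both cases. Your intertwining identities $\Delta_L\circ d=\tfrac{2(n-1)}{n}\,d\circ(\Delta-n)$ and $\divg\circ\Delta_L=\tfrac{2(n-1)}{n}(\Delta-n)\circ\divg$ check out on $\bH^n$ and are a nice shortcut for the longitudinal part.

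There is, however, a genuine error in your verification of the coercivity estimate \eqref{EstimateInfinity} for $\Delta_L$: the Weitzenb\"ock identity you invoke has the wrong sign on the zeroth-order term. On $\bH^n$ one has $\ric=-(n-1)b$, and the correct identity for compactly supported $Y$ is
\[
\langle-\Delta_L Y,Y\rangle_{L^2}
=\tfrac12\|\rlie_Y b\|_{L^2}^2
=\|\nabla Y\|_{L^2}^2+\tfrac{n-2}{n}\|\divg Y\|_{L^2}^2-(n-1)\|Y\|_{L^2}^2,
\]
the last term coming from $\int\nabla^iY^j\nabla_jY_i=\int\big((\divg Y)^2+\ric(Y,Y)\big)$. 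With the correct sign you cannot conclude $\langle-\Delta_LY,Y\rangle\geq(n-1)\|Y\|^2$ by dropping nonnegative terms; indeed the right-hand side is not manifestly positive at all. To salvage the argument you need an additional input, namely a spectral lower bound for the rough Laplacian on $1$-forms on $\bH^n$ strictly exceeding $n-1$ (for instance, $\|\nabla Y\|_{L^2}^2\geq\big(\tfrac{(n-1)^2}{4}+1\big)\|Y\|_{L^2}^2$, which follows from the known bottom $\tfrac{(n-3)^2}{4}$ of the $L^2$ spectrum of the Hodge Laplacian on $1$-forms together with $\Delta_{\mathrm{Hodge}}=\nabla^*\nabla+\ric$), or else the direct argument of \cite[Appendix~B]{NonCMC} that the paper cites for precisely this step. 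As written, the coercivity of $\Delta_L$ is not established. The corresponding estimate for $\Delta-n$ via the scalar Poincar\'e inequality is fine, as is your reduction of \eqref{EstimateInfinity} to the model operator on exact hyperbolic space by the perturbation argument from the proof of Theorem~\ref{thmFredholm}.
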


\begin{proof}
It is not complicated to check that the $L^2$-estimate at infinity \eqref{EstimateInfinity} holds for $\Delta - n$. For $\Delta_L$ the $L^2$-estimate at infinity can be proven by standard methods, see for example \cite[Section 7]{LeeFredholm} or Appendix~B in \cite{NonCMC}. The critical exponents of both operators can be computed using the explicit expressions for their components, see the proof of Proposition~\ref{propCritical} below.
\end{proof}

\section{Solutions of critical order}
\label{secAbove} 

Suppose that $P:C^\infty (M;E)\to C^\infty(M;E)$ is a formally self-adjoint 
geometric elliptic operator of order $m$ satisfying the conditions of 
Theorem~\ref{thmFredholm} and let $\delta_-<\delta_+$ be its critical
exponents. Roughly speaking, if 
$u=O(e^{-\delta r})$ for some $\delta \in (\delta_- , \delta_+)$
then $Pu = O(e^{-\kappa r})$ for some $\kappa \in (\delta_-,\delta_+)$
implies that $u=O(e^{-\kappa r})$, 
see \cite[Proposition~6.5]{LeeFredholm}. At the same time,
$Pu = O(e^{-\delta_+ r})$ does not necessarily imply $u=O(e^{-\delta_+ r})$.
An extensive study of the asymptotic behaviour of solutions outside of the
Fredholm interval can be found in \cite{ACF} and \cite[Chapter~4]{AnderssonChrusciel} 
in the case of conformally compact metrics. Analogous results can be proven
for asymptotically hyperbolic manifolds as in
Definition~\ref{defAHmanifold} using the following simple lemma. 
\begin{lemma} \label{lemODE}
Consider the ordinary differential equation 
\begin{equation} \label{eqODE}
u'' + A u' + B u = f.
\end{equation}
Assume that $A^2 - 4 B > 0$ so that the characteristic equation 
$\lambda^2 - A \lambda + B = 0$
has two distinct real roots 
$\delta_- < \delta_+$. Suppose that \eqref{eqODE} holds for 
$u = u(r) = O(e^{-\delta r})$ and $f =f(r) = O(e^{-\kappa r})$ 
for some $\kappa > \delta_+$. Then 
\begin{itemize}
\item $\delta > \delta_-$ implies that $u = O(e^{-\delta_+ r})$, and 
\item $\delta > \delta_+$ implies that $u = O(e^{-\kappa r})$. 
\end{itemize}
\end{lemma}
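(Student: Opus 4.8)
Treat \eqref{eqODE} as a constant-coefficient linear second-order ODE and solve it explicitly. Since $A^2 - 4B > 0$, the characteristic polynomial $\lambda^2 - A\lambda + B$ has two distinct real roots $\delta_- < \delta_+$, with $\delta_+ + \delta_- = A$ and $\delta_+ \delta_- = B$; hence $y_\pm(r) := e^{-\delta_\pm r}$ form a fundamental system of solutions of the associated homogeneous equation, with Wronskian $W = y_+ y_-' - y_+' y_- = (\delta_+ - \delta_-) e^{-Ar}$, which vanishes nowhere.

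Next I would write down a particular solution by variation of parameters. The two integrands occurring in that formula are constant multiples of $e^{\delta_+ s} f(s)$ and $e^{\delta_- s} f(s)$, and since $f = O(e^{-\kappa r})$ with $\kappa > \delta_+ > \delta_-$ both are integrable near $+\infty$; taking the lower limit of integration to be $+\infty$ gives
\[ u_p(r) = \frac{e^{-\delta_+ r}}{\delta_+ - \delta_-}\int_r^\infty e^{\delta_+ s} f(s)\,ds - \frac{e^{-\delta_- r}}{\delta_+ - \delta_-}\int_r^\infty e^{\delta_- s} f(s)\,ds . \]
Estimating $\big|\int_r^\infty e^{\delta_\pm s} f(s)\,ds\big| \leq C\int_r^\infty e^{(\delta_\pm - \kappa)s}\,ds = O(e^{(\delta_\pm - \kappa)r})$ and multiplying by $e^{-\delta_\pm r}$ shows that each of the two terms is $O(e^{-\kappa r})$, hence $u_p = O(e^{-\kappa r})$. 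As $u - u_p$ solves the homogeneous equation there are constants $c_\pm$ with $u = c_+ y_+ + c_- y_- + u_p$, i.e.
\[ u(r) = c_+ e^{-\delta_+ r} + c_- e^{-\delta_- r} + O(e^{-\kappa r}). \]

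It remains to use the hypothesis $u = O(e^{-\delta r})$ to kill the leading terms. If $\delta > \delta_-$, then $c_- e^{-\delta_- r} = u - c_+ e^{-\delta_+ r} - u_p$ is $O(e^{-\delta' r})$ with $\delta' = \min\{\delta, \delta_+, \kappa\} > \delta_-$, which forces $c_- = 0$; then $u = c_+ e^{-\delta_+ r} + O(e^{-\kappa r}) = O(e^{-\delta_+ r})$ since $\kappa > \delta_+$, which is the first bullet. If moreover $\delta > \delta_+$, then $c_- = 0$ as above and now $c_+ e^{-\delta_+ r} = u - u_p$ is $O(e^{-\delta'' r})$ with $\delta'' = \min\{\delta, \kappa\} > \delta_+$, forcing $c_+ = 0$ as well, so $u = u_p = O(e^{-\kappa r})$, which is the second bullet.

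The argument is elementary and I do not anticipate a genuine obstacle; the only points needing attention are the convergence of the two improper integrals defining $u_p$, which is exactly where the hypothesis $\kappa > \delta_+$ enters (without it one would only obtain $u = O(e^{-\delta_+ r})$ rather than $O(e^{-\kappa r})$), and keeping careful track of which exponential dominates which in the comparison step, for which one repeatedly uses $\delta_- < \delta_+ < \kappa$ together with the respective hypothesis on $\delta$.
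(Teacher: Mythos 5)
Your proposal is correct and follows essentially the same route as the paper: the paper's proof consists of quoting exactly the variation-of-parameters formula \eqref{eqVarPar} you derive (your $u_p$ plus the two homogeneous terms) and leaving the decay estimates and the elimination of $\Lambda_\pm$ implicit, which you carry out correctly.
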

Note that we use a possibly non-standard characteristic equation
which results from substituting $u=e^{-\lambda r}$ rather than
$u=e^{\lambda r}$ into \eqref{eqODE}.
\begin{proof}
This is a consequence of the explicit formula
\begin{equation} \label{eqVarPar}
\begin{split}
u
&=
\Lambda_- e^{-\delta_- r} + \Lambda_+ e^{-\delta_+ r} \\
&\qquad
- \frac{1}{\delta_+ -\delta_-}
\left(e^{-\delta_-r} \int_r^\infty e^{\delta_- s} f(s) \, ds
- e^{-\delta_+r} \int_r ^\infty e^{\delta_+s} f(s)\, ds \right)
\end{split}
\end{equation}
for the solutions of \eqref{eqODE}. Note that $\Lambda_-$ and $\Lambda_+$ 
do not depend on $r$.
\end{proof}

In this paper we use the following result.

\begin{proposition} \label{propCritical}
Let $(M,g)$ be a connected asymptotically hyperbolic $n$-manifold of
class $C^{l,\beta}_\tau$, with $n\geq 2$, $l\geq 2$, $0 \leq \beta< 1$, and 
$\tau>0$.
\begin{itemize}
\item 
Assume that $v \in C^{0,0}_\delta$ is such that 
$\Delta v - n v \in C^{k-2,\alpha}_{n + \varepsilon}$ for $\varepsilon > 0$,
$0 < \alpha < 1$, and $k+\alpha \leq l + \beta$.
If $\delta > -1$ then $v \in C^{k,\alpha}_n$.
If $\delta > n$, then $v \in C^{k,\alpha}_{n+\varepsilon}$. 
\item 
Assume that $Z \in C^{0,0}_\delta$ is such that 
$\Delta_L Z \in C^{k-2,\alpha}_{n + \varepsilon}$ for $\varepsilon > 0$,
$0 < \alpha < 1$, and $k+\alpha \leq l + \beta$.
If $\delta > -1$ then $Z \in C^{k,\alpha}_n$. 
If $\delta > n$, then $Z \in C^{k,\alpha}_{n+\varepsilon}$. 
\end{itemize}
\end{proposition}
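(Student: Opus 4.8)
The plan is to treat $\Delta-n$ and $\Delta_L$ uniformly: both are formally self-adjoint geometric elliptic operators to which Theorem~\ref{thmFredholm} applies with indicial radius $\tfrac{n+1}{2}$ (Proposition~\ref{propModelOperators}), and in each case the assertion is that a solution's decay can be pushed up to, or just past, the upper critical exponent $n$. I would carry out the argument for $v$ and $\Delta-n$; for $Z$ and $\Delta_L$ it is the same once a $1$-form is split into its radial component and its tangential part along the cross-section $S^{n-1}$ (the latter decomposed further by Hodge theory on $S^{n-1}$), each piece obeying a scalar radial ODE of the type below. First I would bootstrap regularity. Interior Schauder estimates on unit balls (the weighted form of \cite[Lemma~4.8]{LeeFredholm}), applied to $v\in C^{0,0}_\delta$ with $\Delta v-nv\in C^{0,\alpha}_{n+\varepsilon}$, give $v\in C^{2,\alpha}_{\min\{\delta,\,n+\varepsilon\}}$. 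This already proves the proposition when $\delta\ge n+\varepsilon$, and gives $v\in C^{2,\alpha}_n$ (the first bullet) when $n\le\delta<n+\varepsilon$; the remaining cases $-1<\delta<n$ and $n<\delta<n+\varepsilon$ (the latter still needing the stronger conclusion $C^{2,\alpha}_{n+\varepsilon}$) are handled below. In the first of these, $\delta$ lies in the Fredholm interval $(-1,n)$ of $\Delta-n$, so improved elliptic regularity \cite[Proposition~6.5]{LeeFredholm} upgrades $v$ to $C^{2,\alpha}_\kappa$ for every $\kappa<n$; in the second one simply retains $v\in C^{2,\alpha}_\delta$.

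Next I would separate off the metric contribution and reduce to ODEs. Write $\Delta^g-n=(\Delta^b-n)+E$, where $E$ is of order two with coefficients $O(e^{-\tau r})$ since $g$ is $C^{l,\beta}_\tau$-asymptotically hyperbolic; choosing the weight above close enough to $n$ that it exceeds $n-\tau$ (respectively using $\delta+\tau>n$ in the range $n<\delta<n+\varepsilon$), one obtains $(\Delta^b-n)v=f-Ev\in C^{0,\alpha}_{\kappa'}$ for some $\kappa'>n$, iterating this step finitely often when $\tau$ is small. On $\bH^n$, expanding $\Psi_*v=\sum_j v_j(r)\phi_j(\theta)$ in eigenfunctions $\Delta_\sigma\phi_j=-\mu_j\phi_j$ of the round sphere and using $b=dr^2+\sinh^2 r\,\sigma$, each $v_j$ satisfies
\[
v_j''+(n-1)v_j'-nv_j=F_j,\qquad
F_j=(f-Ev)_j+(n-1)(1-\coth r)v_j'+\tfrac{\mu_j}{\sinh^2 r}\,v_j .
\]
For each fixed $j$ the last two terms carry a factor $(1-\coth r)=O(e^{-2r})$ respectively $(\sinh r)^{-2}=O(e^{-2r})$, so $F_j$ decays strictly faster than $e^{-nr}$. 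Since the characteristic equation $\lambda^2-(n-1)\lambda-n=0$ has roots $\delta_-=-1<\delta_+=n$, Lemma~\ref{lemODE} applies to $v_j=O(e^{-\delta r})$: when $\delta>\delta_-$, formula~\eqref{eqVarPar} gives $v_j=c_j e^{-nr}+O(e^{-(n+\varepsilon')r})$ for a constant $c_j$ and some $\varepsilon'>0$; when $\delta>\delta_+$, it gives $v_j=O(e^{-(n+\varepsilon)r})$ outright.

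It then remains to reassemble. A standard argument — controlling $e^{nr}\Psi_*v(r,\cdot)$ as $r\to\infty$ through the equation, for instance estimating $\partial_r(e^{nr}v)$ in $C^{0,\alpha}(S^{n-1})$ — shows that it converges in $C^{2,\alpha}(S^{n-1})$ to an $r$-independent limit $v_0(\theta)=\sum_j c_j\phi_j(\theta)$, so that $v=v_0(\theta)e^{-nr}+v_1$ with $v_1\in C^{2,\alpha}_{n+\varepsilon'}$; in particular $v\in C^{2,\alpha}_n$. In the second bullet the same reassembly gives $v_0\equiv 0$, hence $v\in C^{2,\alpha}_{n+\varepsilon}$. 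For $\Delta_L$ the radial component of $Z$ runs through the identical ODE (roots $-1$ and $n$), while its tangential components emerge at rate $e^{-(n-1)r}$ in the coordinate frame — that is, $O(e^{-nr})$ measured with $b$ — so $Z\in C^{2,\alpha}_n$ (resp.\ $C^{2,\alpha}_{n+\varepsilon}$) follows in the same way.

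The one genuinely new point, beyond quoting \cite{LeeFredholm}, is that the tangential Laplacian term obstructs a direct application of Lemma~\ref{lemODE} to a single radial ODE, because in bulk $(\sinh r)^{-2}\Delta_\sigma v$ decays no faster than $v$ itself; one must therefore descend to the eigenmode level, where each mode picks up the extra $e^{-2r}$, and then reassemble while keeping track of the $S^{n-1}$-regularity of the leading coefficient $v_0$. This, together with the bookkeeping needed to absorb the $O(e^{-\tau r})$ metric error when $\tau$ is small, is the technical heart of the proof, and it is precisely the place where the asymptotic analysis of \cite[Chapter~4]{AnderssonChrusciel} for conformally compact metrics has to be adapted to the present chart-dependent framework.
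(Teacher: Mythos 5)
Your overall skeleton is the same as the paper's: bootstrap to $C^{2,\alpha}_{\delta'}$ for $\delta'$ close to (or above) $n$ via improved elliptic regularity \cite[Proposition~6.5]{LeeFredholm}, reduce $\Delta-n$ (and the radial/tangential components of $\Delta_L$) to second-order radial ODEs with indicial roots $-1$ and $n$ (resp.\ $-2$ and $n-1$), invoke Lemma~\ref{lemODE} via formula \eqref{eqVarPar}, iterate finitely often to absorb the $O(e^{-\tau r})$ metric error, and finish with Schauder estimates. Where you diverge is the treatment of the term $\sinh^{-2}r\,\Delta_\sigma v$. The paper does \emph{not} pass to spherical eigenmodes: it works with the component functions along radial rays and asserts directly that the entire non-ODE remainder, including the angular Laplacian contribution, is $O(e^{-(\delta'+\gamma)r})$ with $\gamma=\min\{1,\tau\}$, so that a single scalar ODE per component suffices. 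You instead descend to the eigenmode level precisely because, for a general element of $C^{2,\alpha}_{\delta'}$ (whose norm controls covariant, i.e.\ rescaled, angular derivatives), $\sinh^{-2}r\,\Delta_\sigma v$ is a priori only $O(e^{-\delta' r})$. That concern is legitimate, and your route is closer in spirit to \cite[Chapter~4]{AnderssonChrusciel}.

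However, your version has a genuine gap exactly at the step you defer to ``a standard argument'': the reassembly. Applying Lemma~\ref{lemODE} to the mode $v_j$ with source $F_j$ containing $\mu_j\sinh^{-2}r\,v_j$ produces constants that grow with $\mu_j$, and the only uniform control you have over the family $\{\mu_j e^{-2r}v_j\}$ is $\bigl(\sum_j \mu_j^2 e^{-4r}|v_j|^2\bigr)^{1/2}\lesssim e^{-\delta' r}$ --- i.e.\ summed over modes the ``gained'' terms decay no faster than $v$ itself, which is the very difficulty you set out to avoid. So the mode-by-mode improvement does not obviously survive summation in $C^0(S^{n-1})$, let alone $C^{2,\alpha}(S^{n-1})$, and estimating $\partial_r(e^{nr}v)$ ``through the equation'' runs into the same angular term. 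To close this you would need either a genuinely uniform-in-$j$ argument (e.g.\ a barrier/maximum-principle bound giving the sup estimate $|v|\lesssim e^{-nr}$ directly, after which Schauder on unit balls yields the full $C^{2,\alpha}_n$ statement), or the tame-coefficient bookkeeping of \cite[Chapter~4]{AnderssonChrusciel}; alternatively, justify the paper's pointwise claim that the angular contribution to $\Delta v-nv$ decays by the extra factor $e^{-\gamma r}$, which is what the paper's proof rests on and which would let you dispense with eigenmodes altogether.
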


\begin{proof}
We first prove the second claim. A straightforward computation shows
that if $Z\in C^{2,\alpha}_{\delta'}$ then
\begin{align*}
(\Delta_L Z)_r 
&= \tfrac{2(n-1)}{n}(\partial^2_{rr} Z_r + (n-1) \partial_r Z_r - n Z_r) 
+ O(e^{-(\delta' + \gamma)r}), \\
(\Delta_L Z)_\psi 
&= \partial^2_{rr} Z_\psi + (n-3) \partial_r Z_\psi - 2(n-1) Z_\psi 
+ O(e^{-(\delta' + \gamma -1)r}) 
\end{align*}
for $\gamma = \min\{1,\tau\}$. Note that in our case
$Z \in C^{k,\alpha}_{\delta'}$ for any $\delta' \in (-1,n)$ as a consequence
of improved elliptic regularity \cite[Proposition~6.5]{LeeFredholm} so
we may assume that $\delta'+\gamma > n$. Hence the components of $Z$
satisfy 
\begin{align*}
\partial^2_{rr} Z_r + (n-1) \partial_r Z_r - n Z_r 
&= O(e^{-\kappa r}), \\
\partial^2_{rr} Z_\psi + (n-3) \partial_r Z_\psi - 2(n-1) Z_\psi 
&= O(e^{-(\kappa-1) r})
\end{align*}
for $\kappa = \min\{\delta' + \gamma, n + \varepsilon\}$. From 
Lemma~\ref{lemODE} it follows that $Z_r = O(e^{-n r})$, and
$Z_\psi = O(e^{-(n-1)r})$, hence $Z \in C^{k,\alpha}_{n}$ by standard
elliptic regularity  \cite[Lemma~4.8]{LeeFredholm}. Similarly, if $\delta>n$ it follows that
$Z \in C^{k,\alpha}_{n+\varepsilon}$, possibly after repeating this 
argument finitely many times in order to ensure that
$\kappa = n + \varepsilon$.

The first claim is proven similarly using Lemma~\ref{lemODE} and the 
fact that
\[
\Delta v -n v = 
\partial^2_{rr} v + (n-1) \partial_r v - n v + O(e^{-(\delta' + \gamma)r})
\]
for $\gamma = \min\{1,\tau\}$ when $v \in C^{2,\alpha}_{\delta'}$.
\end{proof}

\section{On the unique continuation property}
\label{secUC}

The following result is a straightforward consequence of the unique
continuation results by Mazzeo \cite[Theorem~7]{Mazzeo} and Kazdan
\cite[Theorem~1.8]{Kazdan}.

\begin{proposition}
Let $(M,g)$ be a $C^{2,\beta}_\tau$-asymptotically hyperbolic manifold for 
$\tau>0$ and $0\leq \beta <1$, and let $E$ be a geometric tensor bundle
over $M$. Suppose that $u\in C^2(M;E)$ satisfies the differential
inequality
\begin{equation} \label{eqIneqUC1}
|\Delta u| \leq C (|u| + |\nabla u|),
\end{equation}
where $\Delta = -\nabla^*\nabla$ is the rough Laplacian. If $u$ vanishes
to infinite order at infinity, that is $|u|=O(e^{-Nr})$ for any $N>0$,
then $u=0$ on $M$.
\end{proposition}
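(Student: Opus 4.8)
The plan is to reach the conclusion in two stages: first show that $u$ vanishes on a neighbourhood of infinity using a unique continuation theorem at the conformal boundary, and then propagate this vanishing to all of $M$ by a standard interior unique continuation argument.

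For the first stage I would set up the geometry at infinity. Using the chart $\Psi$ of Definition~\ref{defAHmanifold} and the bound $\|\Psi_* g - b\|_{C^{2,\beta}_\tau}<\infty$ with $\tau>0$, the end of $(M,g)$ is a conformally compact manifold whose conformal infinity is the round sphere $(S^{n-1},\sigma)=\binf$: taking $x=e^{-r}$ as a boundary defining function, the rescaled metric $x^2\Psi_* g$ extends $C^{2,\beta}$ up to $S^{n-1}$ with $|dx|^2_{x^2\Psi_* g}\to 1$ on the boundary, so that $g$ is asymptotically hyperbolic in the sense of \cite{Mazzeo}. The inequality $|\Delta u|\leq C(|u|+|\nabla u|)$ is of the type treated there, its principal part being the scalar Laplacian acting on each component of $E$ in a local frame and its remaining coefficients being bounded up to $S^{n-1}$. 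Interior elliptic estimates adapted to this geometry upgrade the hypothesis $|u|=O(e^{-Nr})$, valid for every $N>0$, to the same decay for $\nabla u$ (and, if needed, for higher covariant derivatives), so that $u$ vanishes to infinite order at $\binf$ in the sense required by \cite[Theorem~7]{Mazzeo}. That theorem then yields a compact set $K_1\subset M$ with $u\equiv 0$ on $M\setminus K_1$.

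For the second stage, note that $M\setminus K_1$ is a non-empty open subset of $M$, which we may take to be connected (all manifolds considered in this paper are; otherwise the argument applies to the component meeting $M\setminus K_1$). The rough Laplacian $\Delta$ is a second-order elliptic operator whose principal part is scalar with locally Lipschitz coefficients, so Aronszajn's unique continuation theorem applies to each component of $u$ in a local frame. Consequently the propagation-of-vanishing result of Kazdan \cite[Theorem~1.8]{Kazdan}, applied to the differential inequality $|\Delta u|\leq C(|u|+|\nabla u|)$ along a chain of overlapping balls joining $M\setminus K_1$ to an arbitrary point of $M$, forces $u\equiv 0$ throughout $M$.

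The one point that needs care is verifying that the hypotheses of \cite[Theorem~7]{Mazzeo} are met by a merely $C^{2,\beta}_\tau$-asymptotically hyperbolic metric rather than a smooth or polyhomogeneous conformally compact one. This reduces to the observation that the Carleman estimate underlying that theorem requires only uniform ellipticity up to $S^{n-1}$ together with bounded lower-order coefficients for the conformally rescaled operator — both guaranteed by $\tau>0$ — and to matching the convention for ``vanishing to infinite order'', which is precisely what the interior elliptic bootstrap in the first stage supplies.
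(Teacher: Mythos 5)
Your proposal is correct and follows essentially the same route as the paper: Mazzeo's unique continuation at infinity to force $u\equiv 0$ outside a compact set, followed by Kazdan's (Aronszajn-type) strong unique continuation to propagate the vanishing to all of $M$. The only difference is presentational — the paper explicitly runs the Carleman-estimate-plus-cutoff argument of Mazzeo's Corollary~11 (Theorem~7 there is the Carleman inequality, not the vanishing statement itself), whereas you quote the conclusion as a black box; the substance is the same.
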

\begin{proof}
The hyperbolic metric $b=dr^2 + \sinh^2 r \, \sigma$ clearly satisfies the
conditions (4)--(6) in \cite{Mazzeo}. We may therefore combine Theorem~7
in this reference with the fact that $g-b\in C^{2,\beta}_\tau$ to conclude
that for any $z\in C^2(M;E)$ vanishing on $\{r\leq r_0\}$ and to infinite
order at infinity we have
\begin{equation} \label{eqIneqUC3}
t^3 \int_M e^{2tr} |z|^2 \, d\mu^g + t \int_M e^{2tr} |\nabla z|^2 \, d\mu^g
\leq
C_0 \int_M e^{2tr} |\Delta z|^2 \, d\mu^g.
\end{equation}
Here it is assumed that $t$ and $r_0$ are sufficiently large, and that
$C_0$ does not depend on $t$. We now argue as in
\cite[Corollary 11]{Mazzeo} and set $z=\phi u$ where $\phi$ vanishes on
$\{r\leq r_0\}$, and is equal to $1$ on $\{r\geq r_0 + 1\}$. As a
consequence of \eqref{eqIneqUC3} combined with \eqref{eqIneqUC1} we obtain
\[
(t^3-2 C_0 C^2) \int_{r_0+1} ^\infty e^{2tr} |u|^2 \, d\mu^g
\leq
C_0 \int_{r_0}^{r_0+1} e^{2tr} |\Delta z|^2 \, d\mu^g.
\]
When $t\to\infty$ the left hand side is at least of order
$O(t^3 e^{2(r_0+1) t})$, whereas the right hand side has order
$O(e^{2(r_0+1) t})$. Hence $u=0$ on $\{r\geq r_0 + 1\}$. To conclude the
proof, it suffices to note that $u$ satisfies the conditions of the
strong unique continuation theorem \cite[Theorem~1.8]{Kazdan}, thus
$u=0$ on $M$.
\end{proof}

\bibliographystyle{amsplain}
\bibliography{biblio}

\end{document}